\documentclass{article}
\synctex=1

\usepackage{amsmath,amsthm,amsfonts,amssymb,esint,mathtools,extarrows,witharrows,enumitem}

%\usepackage{refcheck} 

%% Newcommands .......................................................................%%

\renewcommand{\a }{\alpha }
\renewcommand{\b }{\beta }

\newcommand{\R}{\mathbb{R}}

\def\S{\Sigma} 
\def\n{\nabla}

\def\p{\partial}

\def\a{\alpha}
\def\b{\beta}
\def\<{\langle}
\def\>{\rangle}
\def\n{\nabla}

\def\p{\partial}

\def\a{\alpha}
\def\b{\beta}

\def\H{\mathcal{H}}

\def\s{\sigma}
\def\wt{\widetilde}

\def\wh{\widehat}

 \def\v{\varphi}
  \def\PP{\mathbb{P}}
  \def\NN{\mathbb{N}}
  \def\CC{\mathbb{C}}
  \def\RR{\mathbb{R}}
  
\makeatletter
\renewcommand*\env@matrix[1][*\c@MaxMatrixCols c]{%
  \hskip -\arraycolsep
  \let\@ifnextchar\new@ifnextchar
  \array{#1}} 
\makeatother

\newcommand\blfootnote[1]{
  \begingroup
  \renewcommand\thefootnote{}\footnote{#1}%
  \addtocounter{footnote}{-1}%
  \endgroup
}

%% Envirionments ....................................................................%%

\newtheorem{corollary}{Corollary}[section]
\newtheorem{remark}{Remark}[section]    
\newtheorem{lemma}{Lemma}[section]
\newtheorem{proposition}{Proposition}[section] 
\newtheorem{thm}{Theorem}

%% Theorems With Letters
 
\newtheorem{thmx}{Theorem}
 % ``letter-numbered" theorems

\numberwithin{equation}{section}

\title
{
(CMC) 1-immersions of surfaces  
into hyperbolic 3-manifolds.
}

\begin{document} 

\author{Gabriella Tarantello and Stefano Trapani}

\maketitle

\begin{abstract} 
Constant Mean Curvature (CMC) 1-immersions of surfaces into hyperbolic 3-manifolds are natural and yet rather unfamiliar objects in hyperbolic geometry, with 
curious features and interesting  applications. 

Firstly, Bryant in \cite{Bryant} reveled a surprising relation between (CMC) $1$-immersions of surfaces into the hyperbolic space $\mathbb H^3$ (now known as Bryant surfaces) and  minimal immersions into the euclidean space $\mathbb E^3.$ Ever since, the description of Bryant surfaces has been actively pursued in relation to their "cousins" minimal immersions, see e.g. \cite{Rosenberg} and references therein. 

In addition, the interest to constant mean curvature immersions of a surface $S$ (closed, orientable and of genus $\mathfrak{g} \geq2$) into hyperbolic 3-manifolds was motivated for example in  
 \cite{Uhlenbeck} and \cite{Goncalves_Uhlenbeck} in connection to irreducible representations of the fundamental group $\pi_{1}(S)$ into the Mobious group $PSL(2,\mathbb{C}).$ 
However on the basis of \cite{Bryant}, we see that a (CMC) 1-immersed compact surface  might develop singularities (punctures at finitely many points), and indeed in our analysis the prescribed value 1 of the mean curvature enters as a "critical" parameter. \\ 
More precisely from \cite{Huang_Lucia_Tarantello_2}  we know that, when  $\vert c \vert <1$ then (CMC) $c$-immersions of $S$ into hyperbolic 3-manifolds are always available and their moduli space can be parametrized by elements of the tangent bundle of the Teichm\"uller space $\mathcal{T}_{\mathfrak{g}}(S)$ of the surface $S.$ More importantly, (CMC) $1$-immersions can be attained only as "limits" of such (CMC) $c$-immersions, 
as $|c| \to 1^-,$ see \cite {Tar_2}.

On the other hand, the passage to the limit can be prevented by possible blow-up phenomena. Thus (after scaling) at the limit we may end up with a (CMC) 1-immersion into a tridimensional hyperbolic cone-manifold  (\cite{KS}), and the induced metric on the immersed surface will admit (finitely many) conical singularities (consistently with the presence of "smooth ends" described  in \cite{Bryant}) see Remark \ref{conical} for details.\\ 
In \cite{Tar_2} and \cite{Tar-Tra_1} it was proved that actually the passage to the limit can be ensured in terms of the  Kodaira map \eqref{Kodaira1} (cf. \cite{Tar_2}) and its suitable extension (cf. \cite{Tar-Tra_1}) respectively for surfaces of genus $\mathfrak{g}=2$ and  $\mathfrak{g}=3,$ see Theorem \ref{thmB}, Theorem \ref{thmC} and Theorem \ref{thmD} below and \cite{Tar-Tra_1}. \\
In this note we are able to handle the case of surfaces of any genus $\mathfrak{g}\geq 2.$ As initiated in \cite{Tar_2} and \cite{Tar-Tra_1}, we  capture the blow up situation in terms of a suitable "orthogonality" condition, and we refer to Theorem \ref{mainth} for details.\\
Subsequently, we can provide the existence and uniqueness of (CMC) 1-immersions under an appropriate "generic" condition, see Theorem \ref{thm1} for the precise statement.

\end{abstract}

%\tableofcontents

\section{Introduction}\label{introduction} 

\blfootnote
{
MSC: 35J50, 35J61, 53C42, 32G15, 30F60.
Keywords: 
Blow-up Analysis, Minimiser of a Donaldson functional,  CMC 1-immersions, Grassmannian, hyperelliptic curves. 
}

Let $S$ be an oriented closed surface with genus $\mathfrak{g}\geq 2$ and denote by $\mathcal{T}_{\mathfrak{g}}(S)$ the Teichm\"uller space of $S.$ 

We shall consider Constant Mean Curvature (CMC) $c$-immersions of $S$ into hyperbolic $3$-manifolds, i.e. immersions  with  prescribed value $c$ of the mean curvature.\\ In this context, the value $c=1$ plays a significant role. This fact was pointed out first by Bryant in \cite{Bryant},
where (CMC) $1$-immersions of surfaces into the hyperbolic space $\mathbb H^3$ were shown to share striking analogies with the (cousins) minimal immersions into the Euclidean space $\mathbb E^3$,  see \cite{Rosenberg} and also \cite{Rossman_Umehara_Yamada}, \cite{Umehara_Yamada}. 

Prompted by \cite{Goncalves_Uhlenbeck}, we aim to identify (CMC) 1-immersions of $S$ into hyperbolic 3-manifolds in terms of elements of $T(\mathcal{T}_{\mathfrak{g}}(S))$ the tangent bundle of  $\mathcal{T}_{\mathfrak{g}}(S).$\\ 
To this purpose we recall that in \cite{Huang_Lucia_Tarantello_2} it was shown that, for $|c| < 1$ the moduli space of (CMC) $c$-immersions into hyperbolic 3-manifolds can be parametrized by $T(\mathcal{T}_{\mathfrak{g}}(S)).$ Subsequently, in \cite{Tar_2} it was observed that (CMC) $1$-immersions can be detected only as "limits"  of the (CMC) $c$-immersions (obtained in  \cite{Huang_Lucia_Tarantello_2})  as $c \to 1^-,$ see Theorem \ref{thmprimobis} below.\\
On this basis our main effort will be to control 
the asymptotic behavior of (CMC) $c$-immersions in order to carry them out at the limit, as $c \to 1^-.$

To be more precise we follow  \cite{Uhlenbeck} and \cite{Goncalves_Uhlenbeck}, and for given $X\in \mathcal{T}_{\mathfrak{g}}(S)$ let us suppose for a moment that the Riemann surface $X$ is  immersed with constant mean curvature  $c$ into the hyperbolic 3-dimensional manifold $(N, \hat{g}).$ To attain hyperbolicity, the metric tensor  
$\hat{g}=(\hat{g}_{ij})$  and the Riemann curvature tensor $R_{ijlk}$ of $(N, \hat{g})$  must satisfy the following relation:
\begin{equation}\label{0.1a}
R_{ijlk}=-(\hat{g}_{il}\hat{g}_{jk}-\hat{g}_{ik}\hat{g}_{jl}) \,\mbox{ with }\, 1\leq i, j, k, l \leq 3,
\end{equation}
and the system \eqref{0.1a} expresses actually six independent equations for the six independent components of the Riemann tensor.\\ 
For a more explicit interpretation of \eqref{0.1a}, we introduce Fermi coordinates: $(z, r)\in X\times(a, -a),$ with holomorphic $z$-coordinates in X and $a>0$ small. Thus, in a tubular neighborhood of X in $N,$  we have: 
$\hat{g}_{i3}(z, r)=\delta_{i3},$ for $i=1,2,3$,  and in view of (\ref{0.1a}), the remaining (three) components: $\hat{g}_{ij}(z, r)$, $1\leq i \leq j \leq 2$ satisfy:
\begin{equation}\label{0.2a}
R_{i3j3}=-\hat{g}_{ij}.
\end{equation}
By explicit calculation we see that, in the $(z, r)-$coordinates, the (three) equations in (\ref{0.2a}) defines a $2^{nd}$ order system of ODE's for $\hat{g}_{ij}$ with respect to the variable $r$ (and $z$ fixed), see \cite{Uhlenbeck} and \cite{Huang_Lucia_Tarantello_1} for details.\\
So for $|r|< a$ (and every $z \in X$) the metric $(\hat{g}_{ij})$ is uniquely identified  by its initial  data at $r=0.$ Clearly, such Cauchy data are expressed in terms of the pullback metric $g$ on $X$ and the second fundamental form $II_g,$ and they are constrained by the remaining (three) independent equations
in (\ref{0.1a}), as given by:
\begin{align}
&R_{ijl3}=0 \label{0.4a}\\
&R_{1212}=-(\hat{g}_{11}\hat{g}_{22}-\hat{g}_{12}^2) \label{0.5a}
\end{align}
see \cite{Uhlenbeck} and \cite{Goncalves_Uhlenbeck} for details.
In fact, by Bianchi identity it suffices that  (\ref{0.4a})  and (\ref{0.5a}) are satisfied at $r=0$ in order to hold for any $r\neq0$, see \cite{Lawson} for details. \\
To proceed further, we denote by $g_X$ the unique hyperbolic metric on $X$ (i.e. with constant Gauss curvature -1) as given by the uniformisation theorem. Then by compatibility, the pullback metric $g$ must  be conformally equivalent to $g_X,$ namely: $g=e^{u}g_X,$ with a suitable function $u$ smooth in $X.$ 

In addition, we note that the second fundamental form $II_g$ relative to a (CMC) $c$-immersion is completely identified by its $(2,0)$-part, together with $g$ and $c$. In other words, if we let $\alpha := (2,0)-part$ of $II_g,$ then the pair: $(u, \alpha)$ completely identify the Cauchy data, and the equations \eqref{0.4a}, \eqref{0.5a} expressed in terms of $(u, \alpha)$ define the well known Gauss-Codazzi equations.   \\
To be more precise, let $E=T^{1,0}_X$ be the holomorphic tangent bundle of $X$ with dual $E^*=K_X$ defining the canonical bundle of $X.$\\ Both holomorphic line bundles $E$ and $E^*$ (and their tensor products) inherit the complex structure induced by $X$ and the hermitian product induced by $g$ or $g_X,$  with corresponding norm denoted by: $\|\cdot\|_g$ and $\|\cdot\|$ respectively.  
Notice that,  $\alpha = (2,0)-part$ of $II_g$ defines a (1, 0)-form valued in $K_X,$ and (as already known to Hopf) for $r=0$ the two (independent) equations in (\ref{0.4a})
combine into the following (complex) \underline{Codazzi equation}:
\begin{equation}\label{0.6a}
\bar{\partial}\alpha=0 
\end{equation}
where $\bar{\partial}$ corresponds to the d-bar operator in the complex structure of $K_X \otimes K_X$ (induced by $X$) see 
\cite{Uhlenbeck} for details.
Equivalently, if $C_2(X)$ denotes the finite dimensional (complex) space of holomorphic quadratic differentials on $X,$ then 
$$\a\, \mbox{satisfies \eqref{0.6a}} \iff \a\in C_2(X) \quad \mbox{and} \quad \mbox{dim} _{\mathbb{C}}( C_2(X)) = 3(\mathfrak{g}-1),$$ see \cite{Miranda}.  

Equation (\ref{0.5a}) at $r=0$ yields to the \underline{Gauss equation} for $u$, and it states compatibility of the  Gauss curvature $K_g$ of $(X, g) $ with its extrinsic expression computed in terms of the given immersion, namely:
\begin{equation}\label{0.7a}
K_g=-1+c^2-4\|\alpha\|_g^2.
\end{equation}

By recalling that for $g=e^ug_X$ we have: $\|\alpha\|_g=\|\alpha\|e^{-u}$ and  $\ K_g=e^{-u}(-\frac12\Delta_{X} u-1)$, with  $\Delta_{X}$ the Laplace Beltrami operator in $(X, g_X)$, we can formulate (\ref{0.7a}) in terms of the conformal factor $u,$ as the following elliptic equation of Liouville type:
 \begin{equation}\label{0.8a}
 -\Delta_{X} u=2-2(1-c^2)e^u-8\|\alpha\|^2e^{-u}.
\end{equation}
Conversely, as shown by Taubes in \cite{Taubes}, any solution $(u, \alpha)$ of the Gauss-Codazzi equations (\ref{0.8a})-(\ref{0.6a}) provides an appropriate set of initial data at $r=0$  (and fixed $z$) for a suitable second order system of O.D.E. in the $r$-variable. 
 Thus, by the local solvability of the corresponding Cauchy problem (in the spirit of system (\ref{0.2a})) we obtain  a metric for a hyperbolic 3-manifold: $(N,\hat{g})$ ($N\simeq X\times\mathbb{R}$ not necessarily complete) where $X$ is immersed as a surface with constant mean curvature $c.$  Details for the construction of $(N, \hat{g})$ is provided in \cite{Taubes}, where $(N, \hat{g})$ is referred as a ``germ" of hyperbolic 3-manifolds around $X$ (corresponding to the solution pair $(u, \alpha)$  of (\ref{0.8a})-(\ref{0.6a})). Such an immersion (into a germ of hyperbolic 3-manifolds) is unique up to   diffeomorphims of small tubular neighborhoods of $X,$ and therefore can be taken as a representative for elements of the moduli space of  (CMC) $c-$immersion of $S.$ In this way, we find a parametrization for the moduli space as soon as we find a parametrization for the solution set of the Gauss-Codazzi equation (\ref{0.8a})-(\ref{0.6a}).

It may be tempting to describe the solution set of the Gauss-Codazzi equations (\ref{0.8a})-(\ref{0.6a})
in terms of 
elements of the cotangent bundle of 
$\mathcal{T}_{\mathfrak{g}}(S)$ as given by the pairs:
$
(X,\alpha) \in \mathcal{T}_{\mathfrak{g}}(S) \times C_{2}(X),
$ see \cite{Jost}.
However, 
as discussed in \cite{Huang_Lucia} and \cite{Huang_Lucia_Tarantello_1},  for a fixed $\a\in C_2(X)$ a solution of \eqref{0.8a} 
may not exist, or (when it exists) 
it may not be unique (see also \cite{Huang_Loftin_Lucia}).
So, in general, the pair $(X,\a)$ is not suitable to parameterized (CMC) $c$-immersions.\\ 
Instead, Goncalves and Uhlenbeck  \cite{Goncalves_Uhlenbeck} proposed a (more successful)   "dual" approach, and suggested 
to parametrize the moduli space of (CMC) $c$-immersions of $S$
into hyperbolic $3$-manifolds, 
by elements $(X,[\beta]) \in T (\mathcal{T}_{\mathfrak{g}}(S))$ the tangent bundle  of the Teichm\"uller space $\mathcal{T}_{\mathfrak{g}}(S).$ Hence, the class $[\beta] \in \mathcal{H}^{0,1}(X,E),$ with $E=T^{1,0}_{X}$ and  $\mathcal{H}^{0,1}(X,E)$ the Dolbeault (0,1)-cohomology group (see \eqref{cohomologygroup}, \eqref{classbeta} below) and we have: $C_{2}(X) \simeq (\mathcal{H}^{0,1}(X,E))^{*}$ 
 (cf \cite{Griffiths_Harris}).
 \\
Interestingly, accordingly to  \cite{Goncalves_Uhlenbeck}, the datum $(X, [ \beta ])$ should  identify the \underline{unique} solution 
$(u, \, \a)$ of the Gauss-Codazzi equations (\ref{0.8a})-(\ref{0.6a}) subject to the constraint:
\begin{eqnarray}\label{constraint0}
   *_E^{-1}(e^{-u}\a) \in[\b],
\end{eqnarray} where $*_E$ is the Hodge star operator relative to the metric $g_X,$ acting between (dual) forms valued on $E$ and $E^*$ respectively. As well known, the map  $*_E$ defines an isometry with inverse $(*_E)^{-1},$  for details see \eqref{2.8*hodge_operator_intro} below.
This program was rigorously carried out in \cite{Huang_Lucia_Tarantello_2}, and for $|c|<1$  
(as anticipated by \cite{Goncalves_Uhlenbeck}) the following holds:

\begin{thmx}[\cite{Goncalves_Uhlenbeck},\cite{Huang_Lucia_Tarantello_2}]
\label{thm_A}
For given $c\in (-1,1)$ there is a one-to-one correspondence between the space of constant mean curvature $c$-immersions of $S$ into a (germ of) hyperbolic $3$-manifolds  and the 
tangent bundle of 
$\mathcal{T}_{\mathfrak{g}}(S)$, parametrized by the pairs: 
$
(X,[\beta])\in \mathcal{T}_{\mathfrak{g}}(S) \times \mathcal{H}^{0,1}(X,E),
\; 
E=T_{X}^{1,0}.  
$ 
\end{thmx}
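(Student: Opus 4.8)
The plan is to reduce Theorem \ref{thm_A} to a well-posed analytic problem and then to resolve the latter by a direct-method argument for a Donaldson-type functional. First I would invoke Taubes' construction recalled above: up to diffeomorphisms of tubular neighbourhoods, a (CMC) $c$-immersion of $S$ is the same datum as a solution pair $(u,\alpha)$ of the Gauss--Codazzi system \eqref{0.6a}--\eqref{0.8a}, with $\alpha\in C_2(X)$ holomorphic and $u$ solving the Liouville-type equation \eqref{0.8a} on $(X,g_X)$. Consequently the content of Theorem \ref{thm_A} is the following: for every $(X,[\beta])\in\mathcal{T}_{\mathfrak{g}}(S)\times\mathcal{H}^{0,1}(X,E)$ there exists exactly one such pair $(u,\alpha)$ that, in addition, satisfies the constraint \eqref{constraint0}. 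Since the Teichm\"uller parameter only prescribes the complex structure (hence $g_X$, $\bar{\partial}$, $\Delta_X$ and $*_E$), I would fix $X$ and solve the fibre problem, the bundle statement then following from the smooth dependence of all the data on $X\in\mathcal{T}_{\mathfrak{g}}(S)$.

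The first key step is to decouple the constraint from the equation. For fixed smooth $u$, consider the (conjugate-linear) map
\begin{equation}
T_u:C_2(X)\longrightarrow \mathcal{H}^{0,1}(X,E),\qquad T_u(\alpha)=\big[\,*_E^{-1}(e^{-u}\alpha)\,\big].
\end{equation}
Both spaces have complex dimension $3(\mathfrak{g}-1)$, so it suffices to prove injectivity. If $*_E^{-1}(e^{-u}\alpha)=\bar{\partial}f$ for a smooth section $f$ of $E$, then, pairing against the holomorphic $\alpha$ through Serre duality and using $\bar{\partial}\alpha=0$ together with Stokes' theorem,
\begin{equation}
\int_X *_E^{-1}(e^{-u}\alpha)\wedge\alpha=\int_X \bar{\partial}(f\wedge\alpha)=0.
\end{equation}
On the other hand this same integral equals, up to a fixed positive constant, $\int_X e^{-u}\|\alpha\|^{2}\,dA_{g_X}$, which is strictly positive unless $\alpha\equiv0$. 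Hence $T_u$ is a bijection, and for each $[\beta]$ there is a unique $\alpha_u\in C_2(X)$ with $T_u(\alpha_u)=[\beta]$, depending real-analytically on $u$. The Gauss--Codazzi system then collapses to the single nonlocal scalar equation
\begin{equation}\label{nonlocalgauss}
-\Delta_X u=2-2(1-c^2)e^{u}-8\|\alpha_u\|^2 e^{-u}.
\end{equation}

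The second, and technically decisive, step is the existence and uniqueness of $u$ solving \eqref{nonlocalgauss}. Writing the representatives of $[\beta]$ as $\beta-\bar{\partial}w$ for sections $w$ of $E$, and recording that the holomorphic $\alpha_u$ is recovered as $\alpha=e^{u}\,*_E(\beta-\bar{\partial}w)$ (so that $\|\alpha\|^2 e^{-u}=e^{u}\|\beta-\bar{\partial}w\|^2$), I would package $u$ and $w$ into the joint Donaldson-type functional
\begin{equation}\label{donaldson}
\mathcal{J}(u,w)=\int_X\Big(\tfrac12|\nabla u|^2-2u+2(1-c^2)e^{u}+8\,e^{u}\|\beta-\bar{\partial}w\|^2\Big)\,dA_{g_X}.
\end{equation}
Its Euler--Lagrange equations read off exactly as required: the variation in $w$ forces $\beta-\bar{\partial}w$ to be the weighted-harmonic representative, equivalently $\bar{\partial}\alpha=0$ (the Codazzi equation), while the variation in $u$ yields precisely \eqref{nonlocalgauss}. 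The decisive sign is $2(1-c^2)>0$: for $|c|<1$ this term, combined with the Dirichlet energy and a Moser--Trudinger estimate, makes $\mathcal{J}$ coercive and bounded below, so the direct method produces a minimizer, smooth by elliptic bootstrap. Uniqueness follows from convexity, $\mathcal{J}$ being strictly convex in $u$ (Dirichlet plus the convex, positively weighted $e^u$-term) while the coupling $e^{u}\|\beta-\bar{\partial}w\|^2$ is jointly convex in $(u,w)$, which pins down both $u$ and the representative, hence $(u,\alpha_u)$, uniquely.

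Finally I would close the correspondence: given any (CMC) $c$-immersion with conformal type $X$, extract its $(u,\alpha)$ from Gauss--Codazzi and set $[\beta]:=T_u(\alpha)=[\,*_E^{-1}(e^{-u}\alpha)\,]$; by the uniqueness just established this is a two-sided inverse of $(X,[\beta])\mapsto(u,\alpha_u)$, compatible with the projection to $\mathcal{T}_{\mathfrak{g}}(S)$, yielding the claimed bijection of $T(\mathcal{T}_{\mathfrak{g}}(S))$ with the moduli space. I expect the main obstacle to lie in the variational analysis of \eqref{donaldson}: establishing simultaneously the coercivity and lower semicontinuity needed for existence and the strict convexity needed for uniqueness, while controlling the nonlocal dependence $u\mapsto\alpha_u$ and verifying that the inner minimization in $w$ genuinely reproduces the term $8\|\alpha_u\|^2 e^{-u}$ with $\alpha_u$ holomorphic. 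The borderline role of the hypothesis $|c|<1$ --- the coercivity degenerates exactly at $c=1$, which is the very case motivating the rest of the paper --- makes this estimate the crux of the argument.
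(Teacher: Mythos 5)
Your overall route is the same as the paper's: the paper reduces Theorem \ref{thm_A}, via Taubes' construction, to the constrained Gauss--Codazzi system \eqref{system_of_equations_introbis}, identifies its weak solutions with critical points of the Donaldson functional \eqref{F_t}, and then quotes from \cite{Huang_Lucia_Tarantello_2} (as anticipated in \cite{Goncalves_Uhlenbeck}) that for $t=1-c^{2}>0$ the functional $F_{t}$ has a unique critical point, given by its global minimum. Your functional $\mathcal{J}$ is exactly $2F_{t}$ (up to renaming $\eta=-w$ and the choice of representative of $[\beta]$), and your preliminary observation that $T_{u}$ is bijective, proved by Stokes/Serre duality, is correct and a clean way to phrase the constraint.

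There is, however, a genuine gap in your uniqueness step: the term $e^{u}\Vert\beta-\bar{\partial}w\Vert^{2}$ is \emph{not} jointly convex in $(u,w)$, so $\mathcal{J}$ is not convex on its affine domain and uniqueness cannot be obtained this way. Pointwise, the model $(u,v)\mapsto e^{u}(1-v)^{2}$ restricted to the line $u=v=s$ gives $h(s)=e^{s}(1-s)^{2}$ with $h''(0)=-1<0$. The failure persists at the level of the functional: fix $\eta_{0}$ with $\bar{\partial}\eta_{0}\neq 0$ and restrict $\mathcal{J}$ to the family $(u,w)=(r,s\eta_{0})$, $r,s\in\mathbb{R}$. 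Since the harmonic representative $\beta_{0}$ is $L^{2}$-orthogonal to $\bar{\partial}$-exact forms,
\begin{equation*}
g(r,s):=\mathcal{J}(r,s\eta_{0})=-2r\,|X|+2te^{r}|X|+8e^{r}\left(A+Bs^{2}\right),\qquad A=\Vert\beta_{0}\Vert_{L^{2}}^{2},\quad B=\Vert\bar{\partial}\eta_{0}\Vert_{L^{2}}^{2},
\end{equation*}
with $|X|=\int_{X}dA$, and the Hessian of $g$ has determinant $16Be^{2r}\left(2t|X|+8A-8Bs^{2}\right)$, which is negative for $s^{2}$ large. So $\mathcal{J}$ is indefinite along such two-dimensional slices; separate convexity in $u$ and in $w$ (which does hold) never implies uniqueness of critical points. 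This is precisely why uniqueness is the delicate part of \cite{Huang_Lucia_Tarantello_2}, and of its $t=0$ counterpart recalled as Theorem \ref{thmprimobis} from \cite{Tar_2}: there one shows that any critical point must coincide with the global minimizer through inequalities that exploit the holomorphicity of $\alpha_{t}=e^{u_{t}}*_{E}(\beta_{0}+\bar{\partial}\eta_{t})$ and the Serre-duality orthogonality $\int_{X}\bar{\partial}\eta\wedge\alpha=0$ for $\alpha\in C_{2}(X)$ --- the very mechanism of your $T_{u}$-injectivity argument --- combined with comparison arguments for the difference of two solutions of the Gauss equation; it is not a soft convexity statement. Your existence sketch (boundedness below and coercivity for $t>0$, direct method, elliptic regularity) is consistent with the cited proof, though the coercivity in $\eta$ (via the Poincar\'e inequality \eqref{poincare}) and the weak lower semicontinuity of $\int_{X}e^{u}\Vert\beta_{0}+\bar{\partial}\eta\Vert^{2}dA$ need more care than you give them; the step that actually fails as written is the uniqueness argument.
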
	
As discussed in \cite{Uhlenbeck} and \cite{Taubes} (and more generally in \cite{Huang_Lucia_Tarantello_2}) from Theorem \ref{thm_A} one can deduce useful algebraic information about all possible irreducible representations of the fundamental group $\pi_{1}(S)$ into the Mobious group $PSL(2,\mathbb{C})$ 
(or PU(2,1)).
Also we mention \cite{Loftin_Macintosh_3} for analogous results in the context of Lagrangean immersions and  \cite{Loftin_Macintosh_1}, \cite{Loftin_Macintosh_2}  concerning minimal immersions in various contexts via the Higgs bundle approach of Hitchin's selfduality theory discussed below.  
For further details see \cite{Huang_Lucia_Tarantello_2} and \cite{Tar_2} .
\\ \\
To see that \eqref{constraint0} is a "natural" constraint for the Gauss-Codazzi equations (\ref{0.8a})-(\ref{0.6a}), we recall that according to Dolbeault decomposition, any Beltrami differential $\beta$ (i.e. a $(0,1)$-form valued in $E$) admits the following unique decomposition: 
$$
\beta = \beta_{0} + \bar{\partial}\eta
$$
with  $\beta_{0}$ \underline{harmonic} (with respect to $g_{X}$) and  
$\eta$ a smooth section of X valued on E.
Hence, the corresponding  (0,1)-cohomology class  $[\beta] \in \mathcal{H}^{0,1}(X,E)$ is uniquely identified
by its harmonic representative: $\beta_{0} \in [\beta].$\\ 
In this way, for a fixed pair $(X, [\beta])$ we can formulate the Gauss-Codazzi equations constrained by \eqref{constraint0} by letting:
\begin{equation}\label{constraint}
g=e^u g_X \quad  \a=e^{u}*_E(\beta_{0} + \bar{\partial}\eta),\,\,\mbox{with harmonic}\,\, \beta_0 \in [\beta]\,\,\mbox{and }\, \eta\in A^{0}(X),
\end{equation}
and see (by recalling that $*_E$ is norm preserving,) that $(u\, , \eta)$ must satisfy:
\begin{equation}\label{system_of_equations_introbis}
\left\{
\begin{matrix*}[l]
\Delta_{X} u +2 -2te^{u} -8e^{u}\Vert \beta_{0}+\overline{\partial}\eta \Vert^{2} =0  &  \;\text{ in }\;  &  X  \\
\overline{\partial}(e^{u}*_{E}(\beta_{0}+\overline{\partial}\eta))=0 &  \;\text{}\;  &   \\ 
\end{matrix*}
\right.
\end{equation} and $t=1-c^2.$
\\
It is interesting to notice that system \eqref{system_of_equations_introbis}
can be formulated in terms of  Hitchin's self-duality equations \cite{Hitchin}
with respect to a suitable nilpotent $SL(2,\mathbb{C})$ Higgs bundle, we refer to \cite{Alessandrini_Li_Sanders} and \cite{Huang_Lucia_Tarantello_2} for details. 
Therefore, on the ground of Hitchin's sefduality theory, the existence and uniqueness for \eqref{system_of_equations_introbis} is equivalent 
 to the "stability" of the given Higgs bundle (cfr \cite{Hitchin} and \cite{Wentworth}). The "stability" property has been succcessfully verified in the context of minimal immersions (see e.g. \cite{Li}, \cite{Alessandrini_Li_Sanders}, \cite{Hitchin}, \cite{Donaldson}, \cite{Loftin_Macintosh_1} and \cite{Loftin_Macintosh_2}) but it appears difficult to be directly checked in our context. 

However, it is easy to check that 
for any given pair $(X,[\beta])$ then (weak) solutions of the "constraint" Gauss-Codazzi equations (\ref{system_of_equations_introbis}) correspond to  critical points of the following Donaldson functional  introduced (and so called) in \cite{Goncalves_Uhlenbeck}:
\begin{equation}\label{F_t}
F_{t}(u,\eta)
= 
\int_{X}
\left(\frac{\vert \nabla_{X} u \vert^{2}}{4}
-
u
+
te^{u}
+
4e^{ u}\Vert \beta_{0} + \overline{\partial} \eta \Vert^{2}
\right)
\,dA.
\end{equation}
\\
Indeed, Theorem \ref{thm_A} 
is established in \cite{Huang_Lucia_Tarantello_2} by showing precisely that, for $t>0$ the functional $F_t$ admits a \underline{unique} critical point $(u_t , \eta_t)$ given by its global minimum.

On the other hand, for $t \leq 0$ (or equivalently $|c|\geq 1$) it is not at all clear weather the functional $F_t$ admits critical points, as we have an evident non-existence situation when $[\beta]=0$
(see Section \ref{Asymptotics} for details). 

Therefore for $t\leq0,$ the crucial issue is to identify the pairs: $(X,[\beta])$ (with $[\beta]\neq 0$) yielding to a functional $F_t$ (possibly unbounded from below) which admit critical points.

For the geometrically meaningful case: $t=0$ (i.e.  $|c|=1$) such a task demands  a detailed asymptotic analysis, since a critical point for $F_{t=0}$ exists (and is unique) only as "limit" of the pair $(u_t, \eta_t) \,\,\mbox{as }\, t\to 0^+,$ see Theorem 8  of \cite{Tar_2} (or Theorem \ref{thmprimobis} below).
However, such passage to the limit can be prevented by a "blow-up" situation involving the conformal factor $u_t.$ \\  
In this respect, we recall from \cite{Bryant} (see also 
 \cite{Rossman_Umehara_Yamada}, \cite{Umehara_Yamada}) that
(CMC) $1$-immersions of surfaces into the hyperbolic space $\mathbb{H}^{3}$ develop "smooth end", which in a compact setting manifest as "punctures" at finitely many points. As discussed in Remark \ref{conical}, typically such singularities  correspond to conical singularities 
and in our analysis they will occur naturally as blow-up points  of the function: $$ \xi_t:= -u_t + log(\|\alpha_t\|^2) \,\,\mbox{with} \,\, \a_t=e^{u_t}*_E(\beta_{0} + \bar{\partial}\eta_t)\,\mbox{and}\, t \to 0^+.$$

Indeed, in view of the (constrained) Gauss equation  in \eqref{system_of_equations_introbis}, we see that $\xi_t$ satisfies a Liouville type equation (see \eqref{1.19a} below) so that, according to \cite{Brezis_Merle}, \cite{Li_Harnack}, 
 \cite{Li_Shafrir},  \cite{Bartolucci_Tarantello} \cite{Tar_1} and together with \cite{Tar_2}, we find that the following alternative holds:\\ \\ 
\medskip
(1) either (\underline {Compactness}) : $\limsup_{t \to 0^+}max_{X} \ \xi_t < + \infty$ and  then $(u_{t},\eta_{t})\rightarrow (u_{0},\eta_{0})$ uniformly in $X$
as $t\rightarrow 0^{+}$; and $(u_{0},\eta_{0})$ is the unique critical point of $F_0$ corresponding to its global minimum;\\ \\
\medskip
(2) or (\underline{Blow-up}):  $\limsup_{t \to 0^+} max_{X} \ \xi_t = \liminf_{t \to 0^+} max_{X} \ \xi_t = + \infty, $ and along \underline{any} sequence $t_k \to 0^+,$ we have that    
 $ \xi_k := \xi_{t_k}$ admits a finite set $\mathcal{S}$ of \underline{blow-up points} (depending possibly on the sequence  $t_k$) and any blow up point $x \in \mathcal{S}$ satisfies: 
 $$\lim_{k \to + \infty} (\max_{B(x;r)} \ \xi_k) = + \infty, \, \, \text{ \  for all small \ } r > 0;$$   
with \underline{blow-up mass at $x$} given as follows:
\begin{equation}\label{blowup mass}
m_x:= \frac{1}{8\pi}\lim_{r \to 0^{+} }
\left(
\lim_{k \to +\infty }8 \int_{B(x;r)} \|\widehat\a_{t_{k}}\|^2 e^{\xi_k}dA\right) \in \mathbb{N}
\end{equation}
(quantization property of the blow-up mass) and
\begin{equation}\label{total mass} 
1 \leq \sum_{x \in \mathcal{S}}m_x \leq \mathfrak{g-1};
\end{equation}
see \cite{Tar_1}, \cite{Tar_2} and \cite{Tar_3} for details and also Theorem \ref{thm_blow_up_global_from_part_1} below for the precise statement.

Therefore, to obtain (CMC) $1$-immersions,  we must identify those pairs $(X,[\beta])$  for which "blow-up" can be rule out and the passage to the limit ensured.
On the other hand, by a simple scaling argument, for $[\beta]\neq 0,$  we see that: \\ 
 $(X,[\beta]) \implies \exists$ (CMC) $1$-immersion subject to the constraint \eqref{constraint} $\iff$   $(X,[\lambda \beta]) \implies \exists$ (CMC) $1$-immersion subject to the relative  \eqref{constraint}, $\, \forall\lambda \in \mathbb{C}\setminus \{  0 \}. $  \\ 
Thus, we are naturally lead to consider the projective space:

$\mathbb{P}(\mathcal{H}^{0,1}(X,E)) \simeq \mathbb{P}^{3\mathfrak{g}-4}$ of $\mathcal{H}^{0,1}(X,E)$ (with $E=T^{1,0}_{X}$),\\ 
where,
$\dim_{\mathbb{C}}\mathbb{P}(\mathcal{H}^{0,1}(X,E)) \geq 2$ for $\mathfrak{g}\geq 2.$\\ 
For given $[\beta] \in \mathcal{H}^{0,1}(X,E)\setminus \{ 0\},\,\mbox{ we let }\, [\beta]_{\mathbb{P}} \in  \mathbb{P}(\mathcal{H}^{0,1}(X,E))$ be the projective class identified by the class $[\beta],$ namely:
\begin{equation} \label{betap}
[\beta]_{\mathbb{P}} =\{\, [\lambda\beta] \in C_2(X), \quad \forall \, \lambda \in \mathbb{C} \setminus \{0\}\, \}\in \mathbb{P}(\mathcal{H}^{0,1}(X,E)).
\end{equation}

For genus $\mathfrak{g}=2,$ in \cite{Tar_2} and \cite{Tar-Tra_1}  the existence of (CMC) $1$-immersions was formulated in terms of the Kodaira map:

\begin{equation}\label{Kodaira_map_Intro}
\tau:X\longrightarrow \mathbb{P}(V^{*})\quad V=C_{2}(X)
\end{equation}
see section 12.1.3 of \cite{Donaldson_Book} for details. Here we 
recall only that, for genus $\mathfrak{g} =2$ the Kodaira map $\tau$ defines a two to one  holomorphic map of $X$ into the projective space: $\mathbb{P}(V^*)\simeq \mathbb{P}(\mathcal{H}^{0,1}(X,E)).$ 
Since the image 
$\tau(X)$ defines a complex curve  into $\mathbb{P}(\mathcal{H}^{0,1}(X,E)),$ we get that: $\tau(X) \subsetneq \mathbb{P}(\mathcal{H}^{0,1}(X,E)),$ and 
actually $\mathbb{P}(\mathcal{H}^{0,1}(X,E))\setminus \tau(X)$ defines a non empty  Zariski open (hence dense) subset of $\mathbb{P}(\mathcal{H}^{0,1}(X,E)).$

\begin{thmx}[\cite{Tar_2} ]\label{thmB}
Let $\mathfrak{g}=2$ and suppose that for the pair  
$(X,[\beta]) 
\in 
\mathcal{T}_{\mathfrak{g}}(X) \times (\mathcal{H}^{0,1}(X,E)\setminus \{  0 \} )$
$E=T^{1,0}_{X}$, blow-up occurs (in the sense of (2) above). 
Then, for given $t_k \to 0^{+},$ the sequence $\xi_{t_k}$ admits a unique blow-up point, i.e. 
$\mathcal{S} =\{x_0\}$ and
\begin{equation}\label{Kodaira1}
 [\beta]_{\mathbb{P}}=\tau(x_0).   
\end{equation}
\end{thmx}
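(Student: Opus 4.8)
The plan is to fix the pair $(X,[\beta])$, run the family of immersions $(u_t,\eta_t)$ produced for $t>0$ by Theorem \ref{thm_A}, and extract from the blow-up an exact, $t$-independent integral identity that forces the Serre-dual functional of $[\beta]$ to be the evaluation at the unique blow-up point.

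First I would settle the count of blow-up points. Since $\mathfrak{g}=2$, the bound \eqref{total mass} collapses to $\sum_{x\in\mathcal{S}}m_x=\mathfrak{g}-1=1$; as every blow-up point carries an integer mass $m_x\ge 1$ by the quantization \eqref{blowup mass}, there is exactly one point $x_0$ with $m_{x_0}=1$, giving $\mathcal{S}=\{x_0\}$. I would also record that $m_{x_0}=1$ is the \emph{regular} quantum: along the given sequence the normalized differentials $\alpha_{t_k}/\Vert\alpha_{t_k}\Vert$ subconverge, in the finite-dimensional space $C_2(X)$, to some $\alpha_0\neq 0$, and if $x_0$ were a zero of order $n\ge 1$ of $\alpha_0$ the singular Liouville quantization would force $m_{x_0}\ge n+1\ge 2$. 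Hence $\alpha_0(x_0)\neq 0$, a fact used decisively below.

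Next I would build the bridge to $\tau$. Under Serre duality $C_2(X)\simeq(\mathcal{H}^{0,1}(X,E))^{*}$ the class $[\beta]$ is the functional $\phi\mapsto\langle[\beta],\phi\rangle=\int_X\mu\cdot\phi$, with $\mu$ any Beltrami representative. By the constraint \eqref{constraint0}--\eqref{constraint}, $*_E^{-1}(e^{-u_t}\alpha_t)\in[\beta]$ for every $t$, so this number is independent of $t$; unwinding it in a local holomorphic frame and using that $*_E$ is norm-preserving \eqref{2.8*hodge_operator_intro}, it becomes, up to a universal nonzero constant $c_0$,
\[
\langle[\beta],\phi\rangle=c_0\int_X e^{-u_t}\,\langle\phi,\alpha_t\rangle\,dA=c_0\int_X\frac{\phi}{\alpha_t}\,e^{\xi_t}\,dA ,
\]
where $\phi/\alpha_t$ is the globally defined meromorphic ratio of the two quadratic differentials (its poles are exactly the zeros of $\alpha_t$) and $e^{\xi_t}=\Vert\alpha_t\Vert^2 e^{-u_t}=\Vert\alpha_t\Vert_g^2\,dA_g/dA$. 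Then I would pass to the limit. From the Gauss equation in \eqref{system_of_equations_introbis} and Gauss--Bonnet one gets $8\int_X e^{\xi_t}\,dA\to 8\pi(\mathfrak{g}-1)$, and by the blow-up description (Theorem \ref{thm_blow_up_global_from_part_1}) the measure $\nu_t:=8e^{\xi_t}\,dA$ concentrates, $\nu_{t_k}\rightharpoonup 8\pi\,\delta_{x_0}$, with $\xi_{t_k}\to-\infty$ locally uniformly off $x_0$; in particular $\nu_{t_k}$ carries vanishing mass near the poles of $\phi/\alpha_{t_k}$ (the zeros of $\alpha_{t_k}$, at positive distance from $x_0$), while $\phi/\alpha_{t_k}$ converges uniformly on a fixed disc about $x_0$ because $\alpha_0(x_0)\ne 0$. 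Testing the identity against $\nu_{t_k}$ then yields, for any $\phi_1,\phi_2\in C_2(X)$ with $\phi_2(x_0)\neq 0$,
\[
\frac{\langle[\beta],\phi_1\rangle}{\langle[\beta],\phi_2\rangle}
=\lim_{k}\frac{\int_X(\phi_1/\alpha_{t_k})\,d\nu_{t_k}}{\int_X(\phi_2/\alpha_{t_k})\,d\nu_{t_k}}
=\frac{\phi_1(x_0)}{\phi_2(x_0)},
\]
the common factor $\alpha_0(x_0)$ cancelling. Thus $\phi\mapsto\langle[\beta],\phi\rangle$ is proportional to the evaluation $\phi\mapsto\phi(x_0)$, i.e. $[\beta]_{\mathbb{P}}$ is the projective class of the evaluation functional at $x_0$, which is exactly $\tau(x_0)$; this is \eqref{Kodaira1}.

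I expect the concentration step to be the main obstacle. Establishing that $\nu_{t_k}$ converges to a single Dirac of mass $8\pi$ with no residual absolutely continuous part—so that the regular value $m_{x_0}=1$ is genuinely attained at a point where $\alpha_0$ does not vanish—requires the full Brezis--Merle/Li--Shafrir machinery behind Theorem \ref{thm_blow_up_global_from_part_1}, together with the genus-$2$ mass budget \eqref{total mass}. The second delicate point is that the natural test object $\phi/\alpha_{t_k}$ is only \emph{meromorphic}: one must rule out any loss of mass of $\nu_{t_k}$ toward its moving poles and secure uniform convergence on a neighbourhood of $x_0$. Both hinge precisely on $x_0$ being a regular point of the limiting quadratic differential, which is why the identification of $m_{x_0}=1$ with ``no zero of $\alpha_0$ at $x_0$'' is the crux of the argument.
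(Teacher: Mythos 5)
Your reduction to a single blow-up point with $m_{x_0}=1$ is correct, and the overall architecture of your argument (the $t$-independent Serre pairing, localization of $\int_X\beta\wedge\alpha$ near $x_0$, and a ratio trick to cancel the unknown normalization) parallels what the paper does via Lemma \ref{lemmaexpand}, Proposition \ref{local1} and the Claim in the proof of Theorem \ref{mainthbis}. But there is a genuine gap at precisely the step you yourself call the crux: the inference ``$m_{x_0}=1\Rightarrow\hat\alpha_0(x_0)\neq 0$'' is false. The quantization you invoke, $m_x\geq n+1$ at a zero of order $n$, holds only when $x_0$ is a zero of the limit $\hat\alpha_0$ of \emph{non}-collapsing type, i.e. $x_0\in Z^{(0)}\setminus Z_0$ in the notation of \eqref{zetakappa}--\eqref{Z_0}; see \eqref{sigma_q}. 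When $x_0\in Z_0$, i.e. when two or more distinct zeroes $q_{j,k}$ of $\alpha_{t_k}$ coalesce at $x_0$ as $k\to+\infty$ (in genus $2$, for instance, two simple zeroes collapsing), the blow-up mass can perfectly well equal $1$ while $\hat\alpha_0(x_0)=0$: the Pohozaev-type identity \eqref{s0 identity} is satisfied trivially when $\lambda_{\varphi^{(1)}}=\sigma_0=8\pi$ and places no constraint on the collapsing multiplicity $n$. This is exactly the dichotomy recorded in Proposition \ref{local1} (``either $x_0\notin Z^{(0)}$ or $x_0\in Z_0$'', by \cite{Li_Shafrir}, \cite{Bartolucci_Tarantello}) and in Remark \ref{remm1}, and it is the situation the paper flags as the new, delicate feature of this problem.

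In the collapsing case your proof breaks down at both points you identified as delicate: the ratio $\phi/\alpha_{t_k}$ has poles at the $q_{j,k}$ accumulating at $x_0$, so it does not converge uniformly on any fixed disc about $x_0$, and the would-be limit $\phi(x_0)/\alpha_0(x_0)$ is meaningless since $\alpha_0(x_0)=0$. The conclusion of the theorem remains true, but reaching it requires the sharp pointwise profile estimates of \cite{Tar_1} for blow-up at a collapsing zero, which produce the expansion \eqref{total asymp behavior1} with a \emph{diverging} normalization $\pi/|\hat a_k(z_k)|$ (here $\hat a_k(z_k)\to 0$) and a rotating phase $\overline{\hat a_k(z_k)}/|\hat a_k(z_k)|$; the prefactor is then shown to be bounded after multiplication by $e^{-s_k/2}$ (the Claim in Theorem \ref{mainthbis}, which uses Corollary \ref{alphaspecial} to produce a test differential not vanishing at $x_0$), and the phase cancels, yielding $\int_X\beta\wedge\alpha=0$ for all $\alpha\in Q(x_0)$, i.e. \eqref{orthogonality}. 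In short: your argument is complete only under the additional, unjustified hypothesis $x_0\notin Z_0$; without the collapsing-zero analysis the statement is not proved.
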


Next, we recall that every Riemann surface of genus $\mathfrak{g}=2$ is hyperelliptic. Hence it admits a unique non trivial  bi-holomorphic hyperelliptic involution: 
\begin{eqnarray*}
    j: X\to X
\end{eqnarray*} such that $\tau \circ j = \tau,$ and the map $j$  has  exactly $2(\mathfrak{g}+1)=6$ (for $\mathfrak{g}=2$) distinct fixed points and they coincide with the \underline{Weierstrass points} of $X$  (see \cite{Miranda}, \cite{Griffiths_Harris}).\\
In \cite{Tar-Tra_1} we observed that the functional $F_0$ is equivariant with respect to bi-holomorphisms and more importantly, when $\mathfrak{g}=2$ the following holds:
\begin{thmx}[\cite{Tar-Tra_1} ]\label{thmC}
Under the assumptions of Theorem \ref{thmC}, the blow-up point $x_0$ must be the same along \underline{any} sequence $t_k \to 0^{+},$ and it must coincides with one (of the six) Weierstrass points of X, namely: $j(x_0)=x_0.$  
\end{thmx}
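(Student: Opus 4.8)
The plan is to exploit the equivariance of the Donaldson functional $F_t$ under the hyperelliptic involution $j$, and to combine it with the uniqueness of the blow-up point supplied by Theorem \ref{thmB}. The guiding observation is that $\tau\circ j=\tau$ should force the \emph{whole} variational problem attached to $(X,[\beta])$ to be $j$-invariant, so that its (unique) minimiser, and hence the blow-up function $\xi_t$, must be $j$-invariant; a single blow-up point is then necessarily fixed by $j$.

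First I would record the action of $j$ on the relevant data. Since $g_X$ is the unique hyperbolic metric on $X$, it is preserved by every biholomorphism, so $j$ is a $g_X$-isometry; in particular $j^{*}$ preserves harmonicity and commutes with $\bar\partial$ and with the Hodge operator $*_E$. Next I would show that $j$ acts trivially on $\mathcal{H}^{0,1}(X,E)$, so that $j^{*}[\beta]=[\beta]$. The Kodaira map intertwines $j$ with the induced projective automorphism $j_{V^{*}}$ of $\mathbb{P}(V^{*})$, $V=C_2(X)$, so that $\tau\circ j=j_{V^{*}}\circ\tau$; combined with the hypothesis $\tau\circ j=\tau$ this forces $j_{V^{*}}$ to fix every point of the image $\tau(X)$. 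For $\mathfrak{g}=2$ this image is a nondegenerate (conic) curve spanning $\mathbb{P}(V^{*})\simeq\mathbb{P}^{2}$; a projective transformation fixing a spanning set pointwise is the identity, so $j_{V^{*}}=\mathrm{id}$, whence $j^{*}$ acts on $C_2(X)$ as a scalar, which is $\pm1$ because $j^{2}=\mathrm{id}$. Since $C_2(X)$ is spanned by products $\omega_i\omega_k$ of abelian differentials on which $j$ acts by $-1$, the scalar is in fact $+1$; by Serre duality $\mathcal{H}^{0,1}(X,E)\simeq C_2(X)^{*}$ the action on $\mathcal{H}^{0,1}(X,E)$ is trivial as well, giving $j^{*}[\beta]=[\beta]$.

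Granting this, I would deduce that the minimiser is $j$-invariant. Uniqueness of the harmonic representative gives $j^{*}\beta_{0}=\beta_{0}$; together with $j^{*}g_{X}=g_{X}$, the change of variables $p\mapsto j(p)$ yields $F_{t}(j^{*}u,j^{*}\eta)=F_{t}(u,\eta)$, with $j^{*}u=u\circ j$. Hence, for $t>0$, if $(u_{t},\eta_{t})$ is the unique critical point of $F_{t}$ (Theorem \ref{thm_A}), then $(j^{*}u_{t},j^{*}\eta_{t})$ is again a minimiser, and uniqueness forces $j^{*}u_{t}=u_{t}$. Since $\alpha_{t}=e^{u_{t}}*_{E}(\beta_{0}+\bar\partial\eta_{t})$ is then $j$-invariant as well, the function $\|\alpha_{t}\|^{2}$, and thus $\xi_{t}=-u_{t}+\log\|\alpha_{t}\|^{2}$, satisfy $\xi_{t}\circ j=\xi_{t}$. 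Consequently the blow-up set associated with $\xi_{t_{k}}$ is invariant under $j$, i.e.\ $j(\mathcal{S})=\mathcal{S}$. By Theorem \ref{thmB}, $\mathcal{S}=\{x_{0}\}$ is a single point, whence $j(x_{0})=x_{0}$ and $x_{0}$ is one of the six Weierstrass points.

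Finally, for the independence of $x_0$ on the sequence, I would use that the Weierstrass points are exactly the ramification points of the two-to-one map $\tau$, so each is the \emph{unique} $\tau$-preimage of its image. Theorem \ref{thmB} gives $\tau(x_{0})=[\beta]_{\mathbb{P}}$, a fixed branch point once $[\beta]$ is prescribed; hence $x_{0}$ is the unique Weierstrass point lying over $[\beta]_{\mathbb{P}}$, the same for every sequence $t_{k}\to0^{+}$. The main obstacle is the equivariance/uniqueness step: one must check carefully that $j$, as a $g_X$-isometry, leaves $\beta_{0}$, $*_{E}$ and the entire functional $F_t$ invariant, so that the uniqueness of the minimiser can be genuinely upgraded to its $j$-invariance rather than merely to a permutation of critical points. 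The triviality of the $j$-action on $\mathcal{H}^{0,1}(X,E)$, though elementary once $\tau\circ j=\tau$ is in hand, is precisely what makes this upgrade possible and is the conceptual crux of the argument.
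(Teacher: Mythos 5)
Your proposal is correct and follows essentially the same route the paper indicates for Theorem \ref{thmC}: equivariance of the Donaldson functional under the hyperelliptic involution $j$ (which, in genus $2$, acts trivially on $C_2(X)$ and hence on $\mathcal{H}^{0,1}(X,E)$), combined with uniqueness of the minimiser $(u_t,\eta_t)$, forces $\xi_t\circ j=\xi_t$, so the single blow-up point provided by Theorem \ref{thmB} must be fixed by $j$, i.e.\ a Weierstrass point. The sequence-independence step via $\tau(x_0)=[\beta]_{\mathbb{P}}$ and the fact that the fiber of the two-to-one map $\tau$ over the image of a ramification point is a single point is also the natural (and correct) completion, matching the equivariance argument the paper attributes to \cite{Tar-Tra_1}.
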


As a consequence of Theorem \ref{thmB} and Theorem \ref{thmC} the following holds:

\begin{thmx}[\cite{Tar_2} \cite{Tar-Tra_1}]\label{thmD}
If $\mathfrak{g}=2$ then to every  
$(X,[\beta]) 
\in 
\mathcal{T}_{\mathfrak{g}}(X) \times (\mathcal{H}^{0,1}(X,E)\setminus \{  0 \} )$
$E=T^{1,0}_{X}$, satisfying: 
\begin {equation} \label {weierstrass} [\beta]_{\mathbb{P}}\not \in \{\tau(q), \text{ with }\, q\in X: j(q)=q\}, \end{equation}
there  correspond a \underline{unique} (CMC) 1-immersion of $X$  
into a (germ of) hyperbolic $3$-manifold $N(\simeq S \times \R$), with pull back metric $g$ and $(2,0)$-part $\a$ of the second fundamental form $II_g$ satisfying:
\begin{eqnarray}\label{constraint1}
g=e^u g_X \quad \text{  and  } \quad *_E^{-1}(e^{-u}\a)\in [\b],
\end{eqnarray}where $*^{-1}_E$ the inverse of the Hodge star operator $*_E.$
\end{thmx}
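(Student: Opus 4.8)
The plan is to obtain Theorem~\ref{thmD} as a consequence of Theorem~\ref{thmB} and Theorem~\ref{thmC}: I would use those two results to \emph{exclude} the blow-up alternative for any pair $(X,[\beta])$ satisfying \eqref{weierstrass}, and then convert the resulting analytic compactness into the asserted geometric existence and uniqueness through the correspondence between solutions of the constrained Gauss--Codazzi system \eqref{system_of_equations_introbis} and (CMC)~$1$-immersions.

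First I would fix $(X,[\beta])$ with $[\beta]\neq 0$ satisfying \eqref{weierstrass} and set $t=1-c^2$, so that $c=1$ corresponds to $t=0$. By Theorem~\ref{thm_A}, for each $t>0$ the Donaldson functional \eqref{F_t} has a unique minimiser $(u_t,\eta_t)$, and via Taubes' germ construction (\cite{Taubes}) a solution of \eqref{system_of_equations_introbis} subject to \eqref{constraint0} corresponds to a (CMC)~$c$-immersion satisfying \eqref{constraint}, unique up to diffeomorphism of a tubular neighbourhood of $X$. The remaining task is to carry $(u_t,\eta_t)$ to a limit as $t\to0^+$, which by the dichotomy recalled in the Introduction is possible precisely when the functions $\xi_t$ stay bounded (the compactness alternative~(1)).

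The core step is to rule out blow-up by contradiction. Suppose the blow-up alternative~(2) held for $(X,[\beta])$ along some sequence $t_k\to0^+$. Since $\mathfrak{g}=2$, Theorem~\ref{thmB} produces a single blow-up point $\mathcal{S}=\{x_0\}$ with $[\beta]_{\mathbb{P}}=\tau(x_0)$, and Theorem~\ref{thmC} forces $x_0$ to be a Weierstrass point, i.e. $j(x_0)=x_0$. Then $[\beta]_{\mathbb{P}}=\tau(x_0)$ with $j(x_0)=x_0$ would place $[\beta]_{\mathbb{P}}$ inside the set excluded by \eqref{weierstrass}, a contradiction. Because Theorem~\ref{thmC} moreover fixes the blow-up point along \emph{every} sequence $t_k\to0^+$, no sequential escape remains, and we conclude that alternative~(1) holds: $(u_t,\eta_t)\to(u_0,\eta_0)$ uniformly on $X$, where $(u_0,\eta_0)$ is the unique critical point and global minimiser of $F_0$.

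Finally I would translate back. Setting $\alpha_0=e^{u_0}*_E(\beta_0+\overline{\partial}\eta_0)$ with $\beta_0\in[\beta]$ harmonic, the limit pair $(u_0,\alpha_0)$ solves the Gauss--Codazzi equations \eqref{0.8a}--\eqref{0.6a} at $c=1$ under the constraint $*_E^{-1}(e^{-u_0}\alpha_0)\in[\beta]$, so the germ construction yields a (CMC)~$1$-immersion of $X$ into a hyperbolic $3$-manifold $N\simeq S\times\mathbb{R}$ with pull-back metric $g=e^{u_0}g_X$ and $(2,0)$-part $\alpha_0$, establishing \eqref{constraint1}; uniqueness of the immersion follows from the uniqueness of the critical point of $F_0$ together with the one-to-one correspondence of Theorem~\ref{thm_A} passed to $t=0$. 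Since the decisive analytic content is already supplied by Theorems~\ref{thmB} and \ref{thmC}, the only point I expect to require genuine care is the persistence of the constraint under the limit, namely verifying that the second equation of \eqref{system_of_equations_introbis} and the cohomological condition \eqref{constraint0} survive as $t\to0^+$; this should be routine given the uniform convergence, so the real obstacle is not in this deduction but in the blow-up analysis behind Theorems~\ref{thmB} and \ref{thmC}.
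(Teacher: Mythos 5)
Your proposal is correct and takes essentially the same approach as the paper: blow-up is excluded by contradiction via Theorem \ref{thmB} (single blow-up point with $[\beta]_{\mathbb{P}}=\tau(x_0)$) and Theorem \ref{thmC} ($x_0$ a Weierstrass point), which is incompatible with \eqref{weierstrass}, so the compactness alternative holds and Theorem \ref{thmprimobis} together with Taubes' germ construction and the correspondence of Theorem \ref{thm_A} gives the unique (CMC) $1$-immersion. This is precisely how the paper presents Theorem \ref{thmD} as a consequence of Theorems \ref{thmB} and \ref{thmC} (cf.\ Remark \ref{rho1}), and your closing concern about the constraint surviving the limit is already absorbed into the compactness alternative and Theorem \ref{thmprimobis}.
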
	

We expect that the condition \eqref{weierstrass} 
is sharp in the sense that, when it fails then blow up occurs ( as in (2) above) and the (CMC) $c$-immersions in Theorem \ref{thm_A} do not pass to the limit, as $|c| \to 1^-.$\\
For completeness we recall from \cite{Tar_2} that, if $\mathfrak{g}=2$ then the functional $F_0$ is always bounded from below, and when \eqref{weierstrass} holds then $F_0$ attains its global minimum at a point corresponding to its unique critical point.
\\
Our main contribution will be to establish  suitable extensions of Theorem \ref{thmC} and Theorem \ref{thmB} for higher genus.\\ 
In this case multiple blow-up points are possible, (with no particular restraint) carrying integral blow-up mass (see \eqref{blowup mass}), and 
so we are naturally lead to consider effective divisors over X.\\ To be more precise, let $X^{(\nu)}$ the symmetric product  of $\nu$-copies of $X$ modulo permutations, which defines a smooth complex manifold of dimension $\nu$ (see Section 2 of Chapter 2 in \cite{Griffiths_Harris}). 

As well known, $X^{(\nu)}$ can be identified with the space of non zero effective divisors of degree $\nu \geq  1$ on $X$. Indeed, a given $\nu$-ple representing an element in $X^{(\nu)}$ is formed by distinct points: $\{p_j\in X, \,\,j=1,\ldots, k \}$ appearing $n_j$ times in the $\nu$-ple, and so it identifies the divisor: $D=\sum_{j=1}^k n_j p_j$, having degree: $deg\,(D)=\sum_{j=1}^k n_j=\nu $ and support: $supp \,D := \{p_1,\ldots,p_k\}\subset X.$ 

In connections with holomorphic quadratic differentials, we recall that every $\a\in C_2(X)\setminus\{0\}$ admits $4(\mathfrak{g}-1)$ zeroes  counted with multiplicity, {(cf \cite{Miranda})}. Therefore the zero set of $\alpha$ identifies in a natural way an effective divisor: $\text{div}(\a) \in X^{(4(\mathfrak{g}-1))}.$ Thus, for $1\leq \nu\leq 4( \mathfrak{g}-1)$ and $D\in X^{(\nu)}$ we let, 
$$
Q(D)=\{\a\in C_2(X): \text{div}(\a) \geq D\}
$$ namely: $\a\in Q(D)\,\
 \iff \, \alpha$ vanishes at each point of $supp \,D$ with greater or equal multiplicity.
 Notice in particular that, for $D=x_0$ then,
 $$Q(x_0) =\{ \alpha \in C_2(X): \alpha (x_0)=0 \}.$$
and, by the very definition of the Kodaira map (see \cite{Donaldson_Book}) we have:
\begin{equation} \label{orthogonality} [\beta]_{\mathbb{P}}=\tau(x_0) \iff \int_X \b\wedge \,\a = \int_X \b_0\wedge \,\a=0, \quad \forall 
 \a\in Q(x_0).
\end{equation}
Clearly, the above "orthogonality" condition is independent of the chosen representative in the projective class $[\beta]_{\mathbb{P}},$ (and obviously on the chosen representative of the cohomology class $[\beta]$). 

On the ground of \eqref{Kodaira1}, for genus $\mathfrak{g}\geq 2$  we need to identify an appropriate version of the "orthogonality" condition \eqref{orthogonality}, which identifies an analytic sub-variety (possibly reducible) of higher dimension, but still \underline{properly} contained in $\mathbb{P}(\mathcal{H}^{0,1}(X,E)),$ and also provides the "natural" replacement (for higher genus) of the complex curve $\tau(X).$\\

In fact, our  main effort will be to establish the following:
\begin{thm}\label{mainth}
Let $\mathfrak{g}\geq 2$ and suppose that for the pair  
$(X,[\beta]) 
\in 
\mathcal{T}_{\mathfrak{g}}(X) \times (\mathcal{H}^{0,1}(X,E)\setminus \{  0 \} )$
$E=T^{1,0}_{X}$, blow-up occurs (in the sense of (2) above).\\
For a given sequence $t_k \to 0^{+},$ 
let $\mathcal{S}$ be the blow-up set of $\xi_{t_k}.$ \\  Then
 for every $x \in \mathcal{S}$ with blow-up mass $m_x \in \mathbb{N},$ there exists: 
\begin{equation} \label{vincoli} N_x \in \mathbb{N} \cup \{0\} :
0\leq N_x \leq 2(m_x -1),
\end{equation}
so that, for the divisor $D := \sum_{x \in \mathcal{S}} (N_x + 1) x$ the following holds:

\begin{equation}\label{ortogonale} \int_X \beta \wedge \alpha = 0, \quad \forall \alpha \in Q(D). \end{equation} \end{thm}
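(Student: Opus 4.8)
The plan is to convert the orthogonality \eqref{ortogonale} into a statement about a single family of integrals indexed by $t_k\to0^+$, and then to read off the divisor $D$ from the way the holomorphic quadratic differentials $\a_{t_k}=e^{u_{t_k}}*_E(\b_0+\bar\partial\eta_{t_k})$ degenerate over the blow-up set $\mathcal{S}$. The starting point is an exact identity valid for every $k$. By the constraint in \eqref{constraint} the minimiser satisfies $*_E^{-1}(e^{-u_{t_k}}\a_{t_k})=\b_0+\bar\partial\eta_{t_k}$, while for every holomorphic $\a\in C_2(X)$ one has $\int_X\bar\partial\eta_{t_k}\wedge\a=0$, the integrand being $d$-exact because $\a$ is $\bar\partial$-closed. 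Using that $*_E$ is a norm-preserving isometry to rewrite the wedge pairing as a Hermitian $L^2$-pairing, and then the pointwise identity $\langle\a,\a_{t_k}\rangle e^{-u_{t_k}}=(\a/\a_{t_k})\,e^{\xi_{t_k}}$ (in which $\a/\a_{t_k}$ is the globally defined meromorphic ratio of the two quadratic differentials), I obtain
\[
\int_X\b\wedge\a=\int_X(\b_0+\bar\partial\eta_{t_k})\wedge\a=\int_X\frac{\a}{\a_{t_k}}\,e^{\xi_{t_k}}\,dA .
\]
Since the left-hand side does not depend on $k$, the whole problem reduces to computing $\lim_{k\to\infty}\int_X(\a/\a_{t_k})\,e^{\xi_{t_k}}\,dA$ and characterising the $\a$ for which it vanishes.

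Next I would localise this integral at $\mathcal{S}$. Because blow-up forces the second fundamental form to blow up, the norms $\l_k:=\|\a_{t_k}\|$ diverge; writing $\a_{t_k}=\l_k\wh\a_k$ with $\wh\a_k$ normalised and (after passing to a subsequence, by compactness in the finite-dimensional space $C_2(X)$) convergent to some $\a_*\neq0$, one checks that on every compact subset of $X\setminus\mathcal{S}$ the conformal factor $u_{t_k}=\log\|\a_{t_k}\|^2-\xi_{t_k}$ tends to $+\infty$, so that $e^{-u_{t_k}}=O(\l_k^{-2})$ there. As $\langle\a,\a_{t_k}\rangle=O(\l_k)$ away from $\mathcal{S}$, the contribution of $X\setminus\mathcal{S}$ is $O(\l_k^{-1})\to0$, and the entire value of $\int_X\b\wedge\a$ is recovered from arbitrarily small neighbourhoods of the finitely many blow-up points. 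It therefore suffices to analyse, for each $x\in\mathcal{S}$, the local integral $\int_{B(x;r)}(\a/\a_{t_k})\,e^{\xi_{t_k}}\,dA$.

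The core of the argument is the local model at a fixed $x\in\mathcal{S}$. In a holomorphic coordinate centred at $x$ I would track three scales at once: the bubble scale $\ve_k\to0$ of the concentrating Liouville mass $e^{\xi_{t_k}}$, the diverging norm $\l_k$, and the positions of the zeros of $\a_{t_k}$ that collapse to $x$, whose total multiplicity I call $N_x$ (equivalently $N_x=\mathrm{ord}_x\a_*$). Two facts must be established. First, the bound \eqref{vincoli}: the collapsing zeros enter the equation \eqref{1.19a} for $\xi_{t_k}$ as a negative Dirac datum concentrating to $-4\pi N_x\delta_x$, and matching this singular source against the quantised local mass $m_x\in\NN$ of \eqref{blowup mass}, through the singular Liouville analysis of \cite{Bartolucci_Tarantello} and \cite{Tar_1}, yields $m_x\ge 1+\tfrac12N_x$, i.e. $0\le N_x\le2(m_x-1)$. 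Second, the vanishing of the local integral whenever $\mathrm{ord}_x\a\ge N_x+1$: after rescaling by $\ve_k$ the ratio $\a/\a_{t_k}$ carries, relative to the normalisation that makes the generic contribution ($\a(x)\neq0$) of order one, a strictly positive extra power of the rescaled variable, and the rescaled integral against the (possibly singular) limiting bubble profile then tends to $0$. Summing over $x\in\mathcal{S}$ and setting $D=\sum_{x\in\mathcal{S}}(N_x+1)x$ gives \eqref{ortogonale}.

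I expect the last point to be the main obstacle. The collapsing zeros of $\a_{t_k}$ may sit precisely at the bubble scale $\ve_k$, on the very support where $e^{\xi_{t_k}}$ concentrates, so $\a/\a_{t_k}$ is genuinely singular there and no naive weak-$*$ passage to the limit with a fixed continuous test function is available. Controlling this interaction uniformly across all admissible scale configurations --- in particular when several bubbles and several collapsing zeros coexist at comparable or widely separated scales, forming a bubble tree --- requires a careful Pohozaev/rescaling analysis, and it is here that the precise constants (the factor $2$ in \eqref{vincoli} and the sharp order $N_x+1$ defining $D$) are pinned down. By contrast, the identity of the first paragraph and the removal of the regular part in the second are comparatively routine; the quantisation bound and the sharp local vanishing carry the real weight of the proof.
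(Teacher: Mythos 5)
Your first two paragraphs are sound and in fact reproduce the paper's own starting point: the identity $\int_X\beta\wedge\alpha=e^{-s_k/2}\int_Xe^{\xi_k}\langle\alpha,\hat\alpha_k\rangle\,dA$ and the vanishing of the contribution away from $\mathcal{S}$ are exactly Lemma \ref{lemmaexpand} (formula (3.75) of \cite{Tar_2}). (A small caveat: your outer estimate relies on $\Vert\alpha_{t_k}\Vert_{L^2}\to+\infty$, which is not justified in the case of blow-up with concentration; the paper instead uses $d_k\to+\infty$ together with $s_k\le d_k+C$ from \eqref{property_v}, which works in all cases.) The genuine gap lies in your third paragraph, in two places. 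First, you \emph{define} $N_x$ as the total multiplicity of the collapsing zeros, $N_x=\mathrm{ord}_x\hat\alpha_0$, and claim that singular Liouville quantization forces $m_x\ge 1+\tfrac12N_x$. This is false for zeros of collapsing type: the blow-up mass at such a point need not "see" the full multiplicity, because the mass can persist at finer scales where the collapsing zeros separate (this is the content of the identity \eqref{s0 identity}, whose alternative $\lambda_{\varphi^{(1)}}=\sigma_0$ is precisely the escape of mass to a finer scale), and blow-up \emph{without} concentration in alternative (ii)-b) of Theorem \ref{thm_blow_up_global_from_part_1} actually forces $n_x\ge 2m_x$, violating your inequality. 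The regime $n_x>2(m_x-1)$, i.e. \eqref{n+1}, is exactly the hard case of the paper, and there $N_x$ is \emph{not} the zero multiplicity: it is a strictly smaller integer produced by an iterated multi-scale (bubble-tree) analysis (Proposition \ref{violate} and Theorem \ref{local3}), and it is only for this re-defined $N_x$ that \eqref{vincoli} holds — a constraint that cannot be dropped, since it is what keeps $\deg D<2(\mathfrak{g}-1)$ and makes $Q(D)$ and the variety $\tilde\Sigma_{\mathfrak{g}}$ the right size.

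Second, even granting the correct $N_x$, your vanishing mechanism — rescale a \emph{fixed} $\alpha$ with $\mathrm{ord}_x\alpha\ge N_x+1$ against the limiting bubble profile — is exactly the "local" strategy that the paper states it cannot carry out in higher genus, and you yourself identify the obstruction (collapsing zeros sitting at the bubble scale) without resolving it. The paper's way around this is the approximation Lemma \ref{approximation}: since $\deg D_k<2(\mathfrak{g}-1)$, one may replace $\alpha\in Q(D)$ by a $k$-dependent sequence $\alpha_k\in Q(D_k)$ whose zero divisor tracks the collapsing zeros and bubbling points; the local integrand then contains the nonnegative factor $\prod_j|z-p_{j,k}|^{2n_j}$ times uniformly convergent holomorphic data, so the weak-$*$ convergence of $W_ke^{\xi_k}$ to a sum of Dirac masses suffices, and no pointwise profile of $\xi_k$ is ever needed. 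Finally, your concluding step "summing over $x\in\mathcal{S}$" skips a necessary normalization: the contributions of different blow-up points carry different diverging factors $1/\varepsilon_{x,k}$, and before one can conclude \eqref{ortogonale} from the vanishing of the leading coefficients $a_x^{(N_x)}(0)$ one must prove that $e^{-s_k/2}/\varepsilon_{x,k}$ stays bounded for every $x$; the paper does this (the CLAIM in the proof of Theorem \ref{mainthbis}) by testing against a special $\alpha_0\in Q(D-x_0)\setminus Q(D)$ supplied by Corollary \ref{alphaspecial}. Without the redefinition of $N_x$, the approximation lemma, and this normalization step, the proposal does not close.
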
 
\medskip

Since for genus $\mathfrak{g}=2$ we have:  $\mathcal{S}=\{x_0\} \,\,\mbox{and } \, m_{x_0}=1,$ we see that \eqref{ortogonale} is a direct extension of \eqref{orthogonality}. On the other hand, for genus $\mathfrak{g}=3,$  in  \cite{Tar-Tra_1} we were able to provide a shaper "orthogonality" condition, by showing that \eqref{ortogonale} actually holds with the choice:  $N_x + 1 =m_x, \,\, \forall x\in \mathcal{S}.$ Such an improvement was possible on the basis of a very accurate blow-up analysis, describing the asymptotic profile of  $\xi_{t_k} \,\,\mbox{as}\, k\to +\infty.$ \\ At the moment, it seems extremely  difficult (or even impossible) to extend to higher genus the description of the asymptotic blow up profile of  $\xi_{t_k}$ with the same accuracy as in \cite{Tar-Tra_1}, along the lines of \cite{Chen_Lin_1, Chen_Lin_2, Chen_Lin_3, Chen_Lin_4} and \cite{Wei_Zhang_1, Wei_Zhang_2, Wei_Zhang_3}. In addition, we face a new and delicate situation where blow-up occurs at a point of "collapsing" zeroes of 
$\alpha_{t_k}\,\,\mbox{as}\, k\to +\infty,$ where the phenomenon of "blow-up without concentration" (see Theorem \ref{thm_blow_up_global_from_part_1}) may manifest.  \\   Instead, to established Theorem \ref{mainth} we change completely point of view 
and relay on an appropriate approximation property (see Lemma \ref{approximation}) of "global" nature rather than  the "local" viewpoint of \cite{Tar_2} and \cite{Tar-Tra_1} focusing on  description of   $\xi_{t_k},$ around a blow-up point. 
\\    
Finally, in the spirit of \cite{Tar-Tra_1}, 
in Section \ref{Preliminaries} we discuss the crucial role played by the  constraints \eqref{vincoli} and \eqref{total mass}, which unable us to show that the "orthogonality" condition \eqref{ortogonale} identifies precisely a (possibly reducible) complex analytic sub-variety:\\ \\
$ \qquad \tilde{\Sigma}_{\mathfrak{g}} \subset \mathbb{P}(\mathcal{H}^{0,1}(X,E)):\quad \tilde{\Sigma}_{\mathfrak{g}=2}=\tau(X)\quad \mbox{and}\quad \mbox{dim} (\tilde{\Sigma}_{\mathfrak{g}}) \leq 2\mathfrak{g} - 3. $ \\ 

In particular, codim$ (\tilde{\Sigma}_{\mathfrak{g}})\geq {\mathfrak{g}}-1$ and therefore $\mathbb{P}(\mathcal{H}^{0,1}(X,E))\setminus \tilde{\Sigma}_{\mathfrak{g}}$ defines a non-empty Zariski open set (thus dense) in $\mathbb{P}(\mathcal{H}^{0,1}(X,E))$ where \underline{compactness} holds, see Corollary \ref{dimproj} for details. \\
Thus we may conclude: 
\begin{thm}\label{thm1}
For $\mathfrak{g}\geq 2$ there exist a closed complex analytic sub-variety:\\
$\tilde{\Sigma}_{\mathfrak{g}} \subset \mathbb{P}(\mathcal{H}^{0,1}(X,E)) $ with codim $ (\tilde{\Sigma}_{\mathfrak{g}})\geq {\mathfrak{g}}-1$ such that for every $(X,[\b])\in \mathcal{T}_{\mathfrak{g}}(X) \times \mathcal{H}^{0,1}(X,E)$ satisfying:
$$
[\b]\neq0 \text{ and } [\b]_\PP\notin \wt\S_{\mathfrak{g}}
$$ there  correspond a \underline{unique} (CMC) 1-immersion of $X$  
into a (germ of) hyperbolic $3$-manifold $N(\simeq S \times \R$) 
satisfying \eqref{constraint1}.
\end{thm}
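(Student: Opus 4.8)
The plan is to read Theorem \ref{mainth} contrapositively: the set of projective classes $[\beta]_{\mathbb{P}}$ along which blow-up can occur is confined to an explicit union of ``orthogonality'' loci, and outside this union only the compactness alternative (1) survives, which delivers the desired (CMC) 1-immersion. I would first attach to every effective divisor $D$ on $X$ the projective linear subspace
\[
\Lambda(D):=\{[\beta]_{\mathbb{P}}\in\mathbb{P}(\mathcal{H}^{0,1}(X,E)):\ \textstyle\int_X\beta\wedge\alpha=0\ \ \forall\,\alpha\in Q(D)\}=\mathbb{P}\big(Q(D)^{\perp}\big),
\]
where $\perp$ is taken for the nondegenerate duality pairing $\mathcal{H}^{0,1}(X,E)\times C_2(X)\to\mathbb{C}$, $(\beta,\alpha)\mapsto\int_X\beta\wedge\alpha$, under which $C_2(X)\simeq(\mathcal{H}^{0,1}(X,E))^{*}$. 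Then I would set
\[
\tilde{\Sigma}_{\mathfrak{g}}:=\bigcup_{D\ \mathrm{admissible}}\Lambda(D),
\]
the union running over the divisors $D=\sum_{x\in\mathcal{S}}(N_x+1)\,x$ permitted by conditions \eqref{vincoli} and \eqref{total mass}: finitely many distinct points carrying integers $m_x\geq1$ with $\sum_x m_x\leq\mathfrak{g}-1$ and $0\leq N_x\leq 2(m_x-1)$. By Theorem \ref{mainth}, whenever blow-up occurs the class $[\beta]_{\mathbb{P}}$ lies in some such $\Lambda(D)$, hence in $\tilde{\Sigma}_{\mathfrak{g}}$; this is precisely the statement that $\tilde{\Sigma}_{\mathfrak{g}}$ contains the entire blow-up locus. (For $\mathfrak{g}=2$ the constraints force $\mathcal{S}=\{x_0\}$, $m_{x_0}=1$, $N_{x_0}=0$, so $D=x_0$ and $\Lambda(x_0)=\tau(x_0)$ by \eqref{orthogonality}; letting $x_0$ vary recovers $\tilde{\Sigma}_{2}=\tau(X)$, consistent with the text.)

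Second, I would prove that $\tilde{\Sigma}_{\mathfrak{g}}$ is a closed complex analytic subvariety. Group the admissible divisors by multiplicity type $\lambda=(d_1\geq\cdots\geq d_k)$, $d_i=N_{x_i}+1$, and consider the proper holomorphic map $\phi_\lambda:X^{k}\to X^{(\nu)}$, $\nu=\sum_i d_i$, sending $(x_1,\dots,x_k)\mapsto\sum_i d_i x_i$. Over $X^{k}\times\mathbb{P}(\mathcal{H}^{0,1}(X,E))$ I would form the incidence set $I_\lambda=\{(\underline{x},[\beta]_{\mathbb{P}}):\beta\in Q(\phi_\lambda(\underline{x}))^{\perp}\}$. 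Fixing a basis of $C_2(X)$, the condition $\mathrm{div}(\alpha)\geq\phi_\lambda(\underline{x})$ amounts to the vanishing of finitely many jet (Taylor) coefficients, whose matrix $A(\underline{x})$ is holomorphic in $\underline{x}$; then $\beta\in Q(\phi_\lambda(\underline{x}))^{\perp}$ is the requirement that $\beta$ lie in the row space of $A(\underline{x})$, a determinantal (hence analytic) condition that is insensitive to the rank jumps arising as the points of $D$ collapse. Thus $I_\lambda$ is closed analytic; since $X^{k}$ is compact the projection to $\mathbb{P}(\mathcal{H}^{0,1}(X,E))$ is proper, and by Remmert's proper mapping theorem its image $\tilde{\Sigma}_\lambda$ is a closed analytic subvariety. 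Finally $\tilde{\Sigma}_{\mathfrak{g}}=\bigcup_\lambda\tilde{\Sigma}_\lambda$ is a finite union (the bounds below leave only finitely many types), hence closed analytic.

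Third, the codimension estimate. Vanishing along $D$ imposes at most $\deg D=\nu$ independent linear conditions on $C_2(X)$, so $\dim_{\mathbb{C}} Q(D)\geq\dim C_2(X)-\nu$, whence $\dim Q(D)^{\perp}\leq\nu$ and $\dim\Lambda(D)\leq\nu-1$. As the fibers of $I_\lambda\to X^{k}$ are the $\Lambda(\phi_\lambda(\underline{x}))$, one gets $\dim\tilde{\Sigma}_\lambda\leq\dim I_\lambda\leq k+(\nu-1)$. The decisive point is that the constraints \eqref{vincoli} and \eqref{total mass} force $d_i+1\leq 2m_i$ together with $\sum_i m_i\leq\mathfrak{g}-1$, so
\[
\nu+k=\sum_{i=1}^{k}(d_i+1)\leq 2\sum_{i=1}^{k}m_i\leq 2(\mathfrak{g}-1),
\]
and therefore $k+(\nu-1)\leq 2\mathfrak{g}-3$ for every type $\lambda$. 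Hence $\dim\tilde{\Sigma}_{\mathfrak{g}}\leq 2\mathfrak{g}-3$, and since $\dim\mathbb{P}(\mathcal{H}^{0,1}(X,E))=3\mathfrak{g}-4$ we obtain $\mathrm{codim}(\tilde{\Sigma}_{\mathfrak{g}})\geq\mathfrak{g}-1\geq1$; in particular $\mathbb{P}(\mathcal{H}^{0,1}(X,E))\setminus\tilde{\Sigma}_{\mathfrak{g}}$ is nonempty Zariski-open, hence dense.

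Finally I would assemble the conclusion. If $[\beta]\neq0$ and $[\beta]_{\mathbb{P}}\notin\tilde{\Sigma}_{\mathfrak{g}}$, then by the definition of $\tilde{\Sigma}_{\mathfrak{g}}$ and Theorem \ref{mainth} blow-up cannot occur, so the compactness alternative (1) applies: $(u_t,\eta_t)\to(u_0,\eta_0)$ uniformly as $t\to0^{+}$, with $(u_0,\eta_0)$ the unique critical point (global minimum) of $F_0$. This is exactly a solution of the constrained system \eqref{system_of_equations_introbis} at $t=0$, i.e. $c=1$; feeding it into Taubes' construction yields a (germ of) hyperbolic $3$-manifold $N\simeq S\times\mathbb{R}$ into which $X$ is (CMC) 1-immersed, with pull-back metric $g=e^{u_0}g_X$ and $(2,0)$-part $\alpha_0$ of $II_g$ satisfying \eqref{constraint1}, and uniqueness of the critical point gives uniqueness of the immersion, as in Theorem \ref{thm_A}. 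The genuinely hard input is Theorem \ref{mainth} itself (via the global approximation Lemma \ref{approximation}); within the present deduction the only real subtleties are (i) ensuring that the loci $\Lambda(D)$ assemble into an analytic variety even as blow-up points and the zeros of $\alpha_{t_k}$ collapse, which the determinantal incidence description together with the proper mapping theorem handle, and (ii) the exact dimension bookkeeping, where it is essential that \eqref{vincoli} ties each multiplicity $N_x+1$ to the mass $m_x$, so that the $+k$ contributed by the moving support points is precisely absorbed by $\nu+k\leq2(\mathfrak{g}-1)$; without this linkage the bound $\dim\tilde{\Sigma}_{\mathfrak{g}}\leq2\mathfrak{g}-3$ would fail.
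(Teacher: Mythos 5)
Your overall architecture coincides with the paper's: read Theorem \ref{mainth} contrapositively, collect the orthogonality loci of the admissible divisors into a finite union of analytic sets by means of an incidence variety and the proper mapping theorem, get the dimension bound $k+\nu-1\leq 2\mathfrak{g}-3$ from \eqref{vincoli} and \eqref{total mass}, and conclude via the compactness alternative (Theorem \ref{thmprimobis}) and Taubes' germ construction together with uniqueness of the critical point of $F_0$; this is exactly the content of Lemma \ref{dimk}, Corollary \ref{dimproj}, Theorem \ref{mainthbis} and Remarks \ref{rho1}, \ref{remarkdivisori}. The one point where you deviate is the proof that the incidence set $I_\lambda$ is a closed analytic subvariety: the paper obtains this from the holomorphy of the map $\Psi: X^{(\nu)}\to Gr(l,V)$, $D\mapsto Q(D)$, and the structure of $\Sigma_\nu$, both imported from \cite{Tar-Tra_1}, while you propose a self-contained determinantal description. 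That is precisely where your argument has a genuine gap, and it sits at the point you yourself flag: collapsing support points.

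The naive jet matrix $A(\underline{x})$, whose rows are the Taylor coefficients of order $<d_i$ at $x_i$, is indeed holomorphic in $\underline{x}$, but at a collapsed configuration its kernel is \emph{not} $Q(\phi_\lambda(\underline{x}))$: for $d_1=d_2=1$ and $x_1=x_2=p$ the two rows coincide, the kernel is $Q(p)$ rather than $Q(2p)$, and the rank drops below $\nu$. The determinantal condition ``all $(\nu+1)\times(\nu+1)$ minors of $[A(\underline{x})^{T}\,|\,\beta]$ vanish'' is analytic, but at such rank-drop points it is vacuous (satisfied by \emph{every} $\beta$), so over the collapse locus the fiber of your determinantal set is all of $\mathbb{P}^{3\mathfrak{g}-4}$; projecting, $\tilde{\Sigma}_\lambda$ would then be the whole of $\mathbb{P}(\mathcal{H}^{0,1}(X,E))$ and the theorem would fail. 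Conversely, the condition $\mathrm{rank}[A(\underline{x})^{T}\,|\,\beta]=\mathrm{rank}\,A(\underline{x})^{T}$, which does cut out the honest incidence set, is not a closed condition across rank jumps. So ``insensitive to rank jumps'' is exactly what this construction is not. The repair: in a chart containing the colliding points, replace the jet rows by the coefficients of the remainder of $a(z)$ upon division by $\prod_i(z-x_i)^{d_i}$; this matrix is holomorphic in $\underline{x}$, its kernel equals $Q(\phi_\lambda(\underline{x}))$ for every configuration (collapsed or not), and, crucially, its rank is constantly equal to $\nu$ by Riemann--Roch \eqref{2.07*}, which applies because admissibility forces $\nu\leq 2(\mathfrak{g}-1)-k<2(\mathfrak{g}-1)$. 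Constant rank makes row-space membership an analytic condition with every fiber equal to $\mathbb{P}^{\nu-1}$, after which the rest of your argument (properness of the projection from the compact $X^k$, Remmert, the bookkeeping $\nu+k\leq 2(\mathfrak{g}-1)$, and the final assembly) goes through. Note that this constant-rank division argument is, in substance, the proof that $\Psi$ is holomorphic, i.e. the very ingredient the paper cites from \cite{Tar-Tra_1}.
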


Finally, we comment again about the case of genus $\mathfrak{g}=3$, where in view of the improvement in \eqref{ortogonale} mentioned above, it is possible to sharpen the conclusion of Theorem \ref{thm1} by replacing the whole $\tilde{\Sigma}_{\mathfrak{g}}$ with one of its  
irreducible components of dimension $2\mathfrak{g}-3=3$ (for $\mathfrak{g}=3$), we refer to \cite{Tar-Tra_1}  for details.

\section{Preliminaries}\label{Preliminaries}

Let us fix $X\in \mathcal{T}_{\mathfrak{g}}(S).$ Then we can consider $X$ as a  Riemann surface with (unique) hyperbolic metric $g_{X}$ and induced scalar product
$\langle\cdot,\cdot\rangle$, norm $\Vert \cdot \Vert$ and volume 
element $dA$. 

So, around any point $x \in X,$ 
we can introduce holomorphic $\{ z \}$-coordinates  centred at the origin (namely $x$ is mapped to $0$) and we denote: 
\begin{equation}\label{palla} 
\begin{array}{l}
B(x;r), \,\, \mbox{the geodesic ball centred at $x$ with radius } r>0,  \\   
\Omega_{r,x}  \,\, \mbox{the image of $B(x;r)$ in $\mathbb {C}$ with } \, 0\in \Omega_{r,x}, \\
B_{\delta} \mbox{ the disc in $\mathbb {C}$ of center the origin and radius $\delta>0$.}
\end{array} 
\end{equation}
\medskip

Since $X$ is compact, we can pick sufficiently small radii: $r>0$ and $\delta>0$ independent on $x,$ such that holomorphic $z$-coordinates at $x$ are well defined in $B(x;r)$ and $B_{\delta}  \subseteq \Omega_{r,x}, \,\, \forall x \in X.$

Thus, for $z=x+iy\in \Omega_{r,x},$ the local expression of the conformal and Riemannian structure of $X$ (around $x$) are given by:
\begin{equation}\label{coord} 
\begin{array}{l}
\partial=\frac{\partial}{\partial z}=
\frac{1}{2}(\frac{\partial}{\partial x} - i \frac{\partial}{\partial y})
\; \text{ and } \; 
\bar{\partial}=\frac{\partial}{\partial \bar{z}}
=
\frac{1}{2}(\frac{\partial}{\partial x} + i \frac{\partial}{\partial y}) \\
dz=dx + i dy,\; d \bar{z} = dx-idy,\\
g_{X}=e^{2u_X} dzd\bar{z}\quad
u_X \; \text{smooth,} \quad  
dA = \frac{i}{2}e^{2u_{X}} dz \wedge d \bar{z},\\

*dz=i d\bar{z},\; *d \bar{z}=-idz;\quad
*(d \bar{z}\otimes\frac{\partial}{\partial z})=\frac{-i}{2}e^{2u_{X}}(dz)^2
\end{array} 
\end{equation}

Furthermore, without loss of generality, we can consider the so called "normal" coordinates at $x,$ by assuming further that $u_X$ satisfies:
\begin{equation}\label{hypcoord} 
u_X(0) = |\nabla u_X(0)|=0. 
\end{equation}

In addition, in such local coordinates, the Laplace-Beltrami operator on $(X, g_X)$ can be expressed (locally) as follows:
$\Delta_{X}=4e^{-2u_X} \partial \bar{\partial}$ 
and in particular we have:
$4 \partial \bar{\partial} u_{X} = e^{2u_{X}}$ in $\Omega_{r,x}$ . 

In the sequel we also denote 
the \underline{flat} Laplacian by  
$\Delta=4 \partial \bar{\partial}.$ 

\vskip0.5cm
Throughout this paper, we let:
\begin{eqnarray}\label{E}
E=T_X^{1,0} \text{ the holomorphic tangent bundle of $X$} 
\end{eqnarray}
with dual:
$$
E^*=(T_{X}^{1,0})^{*}=K_{X} \text{ the canonical bundle of $X.$}
$$

The holomorphic line bundles $E$ and $E^*$ will be equipped with the complex structure induced by $X$ and with an hermitinan product induced by a given metric $g$ (typically conformal to the metric $g_X$) defined in $X$.  
Therefore, on sections and forms valued on $E$ or $E^*$ (or their tensor products) we have a well defined  $\bar\p$ operator. 
\\ \\
We introduce the spaces: 
$$ \begin{array}{l}
   A^{0}(E)=
\{\text{smooth sections of $X$ valued on $E$}\}, \\  A^{0,1}(X,E)
=
\{ \text{$(0,1)$-forms valued on $E$ }  \} = A^{0,1}(X,\mathbb{C}) \otimes E, \\
A^{1,0}(X,E^*)
= \{ \text{$(1,0)$-forms valued on $E^*$ }  \} = A^{1,0}(X,\mathbb{C}) \otimes E^*.

\end{array} $$
The elements in $A^{0,1}(X,E)$ are known as the Beltrami differentials. \\ 
Thus, on those spaces we have a well defined fiberwise hermitian product $\langle \cdot,\cdot \rangle_{g}$ and norm $\Vert \cdot \Vert_{g}.$ In the sequel, we shall drop the subscript $g$ in the hermitian product and norm induced by $g=g_X.$ \\
Hence, for $p\geq 1,$  we can define
the corresponding $L^{p}$-spaces:
\begin{align}
&
L^{p}(X,E)
=
\{ \eta:X\longrightarrow E 
\; : \; 
\Vert  \eta \Vert_{L^{p}}:=
(\int_{X}\Vert \eta \Vert^{p}dA)^{\frac{1}{p}} < +\infty
\},
\notag
\\
&
L^{p}(A^{0,1}(X,E))
=
\{ 
\beta \in A^{0,1}(X,E)
\; : \; 
\Vert \beta \Vert_{L^{p}}
:=
(\int_{X}\Vert \beta \Vert^{p}dA)^{\frac{1}{p}} < +\infty
\},
\notag
\end{align}	
which define Banach spaces equipped with the given norm: 
$\Vert \cdot  \Vert_{L^{p}}.$ 

Also for $p\geq 1,$ we have the Sobolev space:
\begin{equation}\label{W_1_p}
W^{1,p}(X,E)
=
\{ \eta \in L^{p}(X,E)
\; : \; 
\bar{\partial} \eta \in L^{p}(A^{0,1}(X,E))
\}, 
\end{equation}
defining a Banach space equipped with the norm:
$$
\Vert \eta \Vert_{W^{1,p}}
=
\Vert \eta \Vert_{L^{p}} + \Vert \bar{\partial}\eta \Vert_{L^{p}},
\; \forall \; \eta \in W^{1,p}(X,E).
$$
Incidentally, we recall that, for the holomorphic line bundle $E=T_{X}^{1,0}$ in \eqref{E},  
the following Poincar\'e inequality holds,
\begin{equation}\label{poincare}
\Vert \eta \Vert_{L^{p}} \leq C_{p}\Vert \bar{\partial}\eta \Vert_{L^{p}},
\; \forall \; \eta \in W^{1,p}(X,E).
\end{equation}
for suitable $C_p>0$, see \cite{Huang_Lucia_Tarantello_2}.

Letting:
$$\bar\partial
:
A^{0}(E)
\longrightarrow 
A^{0,1}(X,E).
$$  
we can define the $(0,1)$-Dolbeault cohomology group, as given by the following quotient space:
\begin{equation}\label{cohomologygroup}
\mathcal{H}^{0,1}(X,E)
=
A^{0,1}(X,E)/\overline{\partial}_E(A^{0}(E)),
\end{equation} 
where any Beltrami differential $\beta \in A^{0,1}(X,E)$ identifies the cohomology class: 

\begin{equation}\label{classbeta}
[\beta]=\{ \beta + \bar{\partial}\eta,
\; \forall \; \eta \in A^{0}(E) \}\in \mathcal{H}^{0,1}(X,E).
\end{equation}
Since, by Dolbeault decomposition, any $\beta \in A^{0,1}(X,E)$
can be uniquely decomposed as follows:
$$
\beta = \beta_{0} + \bar{\partial}\eta
\; \text{ with  $\beta_{0}$ \underline{harmonic} (with respect to $g_{X}$) and } \; 
 \eta \in A^{0}(E),
$$
we see that every class  $[\beta] \in \mathcal{H}^{0,1}(X,E)$ is uniquely identified by its harmonic representative $\beta_{0} \in [\beta].$

Next, we consider the \underline{wedge product}:
$$\wedge: A^{0,1}(X,E) \times A^{1,0}(X,E^{*})\to A^{1,1}(X,\mathbb{C})$$ 
(see \cite{Griffiths_Harris}), and obtain the bilinear form: 
\begin{equation}\label{wedge_product_map}
\begin{split}
 A^{1,0}(X,E^{*}) \times A^{0,1}(X,E) \longrightarrow \mathbb{C}
\;:\;
(\alpha,\beta)
\longrightarrow 
\int_{X}   \beta \wedge \alpha,
\end{split}
\end{equation}
which, by Serre duality (see \cite{Voisin}),  is non-degenerate and induces the isomorphism:
\begin{equation}\label{isomorphism_A_1_0_To_A_0_1}
A^{1,0}(X,E^{*})
\simeq 
(A^{0,1}(X,E))^{*}.
\end{equation}
Furthermore, we can express the (metric dependent) \underline{isomophism}  between
$A^{1,0}(X,E^{*})$ and $ A^{0,1}(X,E),$ in terms of the anti-linear Hodge * operator (with respect to the metric $g_X$) acting on forms.
To be more precise, for $x \in X$ we consider the usual Hodge * operator defined on real valued forms and its anti-linear extension:  $$ *_{x}: \,  [T_{x}(X)^{*}]^{0,1} \to  [T_{x}(X)^{*}]^{1,0}\quad \mbox{( anti-linear Hodge  operator)}$$ see \cite{Wells}; where we recall that a  map $L : V \to W$ between complex vector spaces is called anti-linear, if it is $\mathbb{R}$ linear and $L(iv) = -i L(v),  \ \forall v \in V.$

Also recall that, for   $\varphi \in [T_{x}(X)^{*}]^{0,1}\,\
 \mbox { and }  e, f \in E_x$ we have the following  anti-linear isomorphisms: $$\sharp_x : E_x \to E^*_x\quad  \mbox{ and } \quad  *_{x}: \,  [T_{x}(X)^{*}]^{0,1} \otimes E_x \to  [T_{x}(X)^{*}]^{1,0}\otimes E^*_x $$ defined as follows: 

$\sharp_x(e)(f) = <f,e>_x,$ and $*_{x}(\varphi \otimes e) = *_{x}(\varphi) \otimes \sharp_x(e),$  for every $x \in X.$\\
\medskip
In this way, we can extend the Hodge * operator  on forms, valued on $E$ and $E^*$ respectively, as follows:  
\begin{equation}\label{2.8*hodge_operator_intro}
*_{E} : A^{0,1}(X,E) \longrightarrow  A^{1,0}(X,E^{*}),
\end{equation}
where, for given $\beta \in A^{0,1}(X,E)$  the form
 $*_{E}\beta \in A^{1,0}(X,E^*)$ is uniquely identified by the  condition: 
$$ 
\xi \wedge *_{E}\beta  
=
\langle \xi,\beta \rangle \,dA,
\quad \forall \; \xi \in A^{0,1}(X,E).$$ 
As well known, the Hodge operator $*_{E}$ defines an isometry (with inverse $*_E^{-1}$) and more precisely there holds: 
\begin{equation}\label{antiso}
< *_{E}(\beta_1), *_{E}(\beta_2)> = \overline{ < \beta_1, \beta_2>} =  < \beta_2, \beta_1> \,\,\, \mbox{ and }  *_E(i \beta) = -i *_E(\beta). 
\end{equation}
In local holomorphic coordinates, for $*=*_E$ we have:
 \begin{equation}\label{**2}
  * dz = i d \bar{z}, \,\,\,   *d \bar{z} = - i d z, \quad  \sharp(\frac{\partial}{\partial z}) = \frac{e^{2u_{X}}}{2} dz, \end{equation} 
and in particular, 
 \begin{equation}\label{**bis}
 *(d \bar{z}\otimes\frac{\partial}{\partial z})=\frac{-i}{2}e^{2u_{X}}(dz)^2
 \end{equation}

Moreover, for the local expression 
of the (fiberwise) norm (induced by $g_X$) of sections and forms there holds:
\begin{equation}\label{norme}
\begin{split}
\eta = \eta(z) (\frac{\partial}{\partial z}) \quad \implies \quad \Vert \eta \Vert
= \; 
\vert \eta(z) \vert \frac{e^{u_{X}(z)}}{\sqrt{2}},
\;\; 
\eta\in A^{0}(E); 
\\ 
\beta = \beta(z) (d \bar{z} \otimes\frac{\partial}{\partial z}) \quad \implies \quad \Vert \beta  \Vert = \vert \beta(z) \vert,  
\;\;
\beta \in A^{0,1}(X,E); \\
\a= h(z)(dz)^{2} \,\, \implies \, \Vert \alpha \Vert =2 \vert h  \vert e^{-2 u_X}, \,\,\, \a \in A^{1,0}(X,E^*).
\end{split}
\end{equation}

Next, we let $C_{2}(X)$ be the complex linear space of holomorphic quadratic differentials, or equivalently:
$$ \begin{array}{l}
C_{2}(X)
=\{ \alpha \in A^{1,0}(X,E^{*})
\; : \;
\bar\partial
 \alpha=0 \}. \end {array}$$
 Hence, for $\alpha \in C_2(X)$ we have the following local expression at $x\in X$: $$\a= h(z)(dz)^{2}\quad \mbox{with } h\,\,\mbox{holomorphic in} \,\, \Omega_{r,x}.$$
In this way it is clear what we mean by a zero of $\alpha \in C_2(X)$ and corresponding multiplicity, since those notions are independent on the chosen coordinate system.
In particular, if $q$ is a zero of $\alpha$ with multiplicity $n$, then 
in  local $z$-coordinates at $q$ we have:
 $$\alpha = z^{n} \psi(z)(dz)^{2}, \quad \mbox{ $\psi$ holomorphic and never vanishing
  in $\Omega_{r,q}.$}$$ 
Moreover we have: $\Vert \alpha \Vert=2 \vert z \vert^{n} \vert \psi(z) \vert e^{-2 u_X}$ 
and  $\partial \bar{\partial} \ln \vert \psi \vert^{2}=0$ in $\Omega_{r,q}$, a property we shall use in the sequel.\\  
By the Riemann-Roch Theorem we know that,
\begin{equation}\label{dim}
\dim_{\mathbb{C}}C_{2}(X)= 3(\mathfrak{g}-1),
\end{equation}
and more importantly we have:
\begin{eqnarray}\label{2.03}
\text{any } \, \alpha\in C_2(X)\setminus\{ 0 \} \,\, \text {admits}\, \,  4(\mathfrak{g}-1) \text{ zeroes counted with multiplicity, } 
\end{eqnarray}
(see \cite{Miranda} and \cite{Narasimhan}).

By Stokes theorem, we see that actually the bilinear form \eqref{wedge_product_map} is 
well defined and non degenerate
when restricted on the space:
$C_{2}(X) \times \mathcal{H}^{0,1}(X,E)$, 
and it induces the isomorphism:
\begin{equation}\label{C_kappa_X_isometry}
\begin{split}
C_{2}(X) \simeq (\mathcal{H}^{0,1}(X,E))^{*}.
\end{split}
\end{equation}
Furthermore, for 
$[\beta]\in \mathcal{H}^{0,1}(X,E)$ with harmonic representative $\beta_{0} \in [\beta]$, we have: $*_E\beta_0\in C_2(X),$ and 
in analogy to \eqref{2.8*hodge_operator_intro}, we obtain the isomorphism:
\begin{equation}\label{isoC_2}
\mathcal{H}^{0,1}(X,E) \longrightarrow C_{2}(X)
:
[\beta] \longrightarrow  *_{E}\beta_{0}.
\end{equation}

In addition, to the class $[\beta]$ we can associate also an (unique) element in $(C_{2}(X))^{*}$ defined as follows:
\begin{equation}\label{dual}
C_{2}(X)\longrightarrow \mathbb{C}
:
\alpha \longrightarrow \int_{X} \beta_{0} \wedge \alpha = 
\int_{X} (\beta_{0} +\bar{\partial}\eta) \wedge \alpha .
\end{equation}
 Consequently, the space $\mathcal{H}^{0,1}(X,E)$ or equivalently the space of harmonic Beltrami differentials (with respect to $g_X$) can be identified with the dual space $(C_2(X))^*$  .\\

At this point, (in view of \eqref{dim})  we have 
a well-known parametrization of $T^*(\mathcal{T}_{\mathfrak{g}}(S)),$ the cotangent bundle of 
$\mathcal{T}_{\mathfrak{g}}(S),$   given by the pairs:
$$(X,\alpha) \in \mathcal{T}_{\mathfrak{g}}(X)\times C_{2}(X),$$
see e.g. \cite{Jost} for details.
In view of the isomorphism \eqref{C_kappa_X_isometry}, 
we derive also a local trivialization of  $T(\mathcal{T}_{\mathfrak{g}}(S))$ the  tangent bundle of $\mathcal{T}_{\mathfrak{g}}(S)$, parametrized by the pairs: 
$$(X,[\beta]) \in \mathcal{T}_{\mathfrak{g}}(S) \times \mathcal{H}^{0,1}(X,E).$$
Finally, by virtue of \eqref{dim}, we have the equivalence of  all norms in $C_2(X),$ and it is usual
(by recalling the Weil-Patterson form \cite{Jost}) to consider the following  $L^{2}$-hermitian product and $L^{2}$-norm (in terms of $g_X$):
\begin{equation}\label{norm}
\begin{array}{l}
\langle \alpha_1 , \alpha_2\rangle_{L^{2}} = \int_{X} \langle  \alpha_1, \alpha_2\rangle dA, \quad \alpha_1 , \alpha_2 \in C_{2}(X)\\
\\
\Vert \alpha \Vert_{L^{2}}
:=
(\int_{X} \langle  \alpha, \alpha\rangle dA)^{\frac{1}{2}},
\; \text{ for } \; 
\alpha \in C_{2}(X).
\end{array}
\end{equation}
Then, for a given \underline{orthonormal basis} in
$C_{2}(X)$: 
\begin{equation}\label{basis}
\{s_{1},\ldots,s_M \} \subset C_{2}(X)
\; \text{ with } \; 
M=3(\mathfrak{g}-1)
\text{ and }\;
\int_{X}\langle s_{j} , s_{k}\rangle dA
=
\delta_{j,k}
\end{equation}
($\delta_{j,k}$ the Kronecker symbols)
and $\alpha \in C_{2}(X)$ 
we write:
$$\alpha=\sum_{j=1}^{N}b_{j}s_{j}, \;b_{j}\in \mathbb{C}
\; \text{ and compute } \; 
\Vert \alpha \Vert_{L^{2}}^{2}=\Vert \beta_{0} \Vert^{2}_{L^{2}}
=
\sum_{j=1}^{N}\vert b_{j} \vert^{2}.
$$ 
Clearly, any closed and bounded subsets of $C_2(X)$ is compact. Thus, for example, if $\alpha_{k}\in C_2(X)$ satisfies 
$\Vert \alpha_{k} \Vert_{L^{2}}=1$ then it admits a convergent subsequence 
$\alpha_{k_{n}}\longrightarrow \alpha_{0}  \in C_{2}(X),\,\, n\to +\infty;$ 
with 
$\Vert \alpha_{0} \Vert_{L^{2}}=1$. 
\medskip

\vskip 0.2cm
Next, we recall some well-known facts about divisors and some useful consequences of the Riemann-Roch theorem 
that will
be useful in the sequel, see e.g. \cite{Jost}, \cite{Miranda} and \cite{Narasimhan}.
\vskip 0.1cm
For given $\nu\in \mathbb{N},$ let $X^{\nu} =X\times \cdots\times X$ be the Cartesian product of $\nu$-copies of $X$ and consider the quotient of $X^\nu$ modulo the action of the symmetric  group $S_\nu$ by permutations, namely:
\begin{eqnarray*}
    X^{(\nu)}:=X^{\nu} /\{ \text{permutations}\}.
\end{eqnarray*} Then $X^{(\nu)}$ with the quotient topology, defines 
   a compact complex manifold of dimension $\nu,$ where the natural projection: 
   $$\pi : X^\nu \to X^{(\nu)}$$ is surjactive and holomorphic. 
\medskip  
   
   As already mentioned in the Introduction, the manifold $X^{(\nu)}$ is identified with the space of effective divisors in $X$ of degree $\nu.$
More precisely, a non zero  \underline{effective divisor $D$ of degree $\nu$} on $X$ is given by the formal expression:
\begin{eqnarray}\label{divis}
    D=\sum_{j=1}^n n_j x_j,  \quad\, \text{ with } \, x_j\in X \quad \,  n_j \in \mathbb{N} \,  \quad \, \text{ and } \quad \sum_{j=1}^n n_j=\nu > 0,
\end{eqnarray}
and it identifies the unique element of $X^{(\nu)}$ associated with the $\nu$-ple containing $n_j$ copies of the point $x_j$, $j=1,\ldots, n$. 

We denote by: $\text{supp } D= \{x_1,\ldots, x_n\}$ the support of the divisor $D$ in \eqref{divis} (formed by finitely many \underline{distinct} points of $X$) while  the positive integer $n_j\in \mathbb{N}$ defines the multiplicity of $x_j \in \text{supp }D$ for $j=1,\ldots,n,$ and the integer $\nu$ defines the  degree of $D,$ namely: $deg (D):=\sum_{j=1}^n n_j=\nu$.

\begin{remark}\label{topoldiv}
In view of the identification of $X^{(\nu)}$ with the space of non zero  effective divisors of degree $\nu,$  we have the following notion of convergence for divisors: if $\mathbf{x_k} \in X^{\nu}$ is such that: $D_k = \pi (\mathbf{x_k}) \in X^{(\nu)}$ then $D_k \to D \mbox{ in } \, X^{(\nu)}$ as $k \to + \infty,$ if and only if for every subsequence of $\mathbf{x_k}$ converging to some $\mathbf{x} \in X^\nu,$ we have: $\pi(\mathbf{x}) = D.$ \end{remark}

To any  $\alpha \in C_2(X)\setminus \{0 \}$ we associate the so called  \underline{zero divisor} of $\alpha,$ denoted by $div(\alpha),$ and defined as follows:
$$
div(\alpha)=\sum_{j=1}^n n_j q_j,
$$
where $\{q_1,\ldots, q_n\}$ are the \underline{distinct} zeroes of $\alpha$, and $n_j$ is the multiplicity of the zero $q_j$, for $j=1,\ldots,n.$ 
From \eqref{2.03}, we have: $div (\alpha) \in X^{4(\textit{g}-1).}$

\vskip0.2cm
Given $D_1\in X^{(\nu_1)}$ and $D_2\in X^{(\nu_2)}$ we set $D_1\leq D_2$ if $\nu_1\leq \nu_2$, $\text{supp} D_1\subset \text{supp} D_2$ and the multiplicity at $x\in \text{supp} D_1$ is smaller or equal than the multiplicity of $x\in \text{supp } D_2$. 

For an effective divisor $D$, we define:
$$
    Q (D)=\{\alpha\in C_2(X): \text{div}(\alpha)\geq D\},
$$
and for the trivial divisor $D=0$ we set Q(0)=$C_2(X).$
As a direct consequence of the Riemann-Roch theorem we have,
\begin{eqnarray}\label{2.07*}
\begin{array}{l}
\mbox{if}\,\,1\leq \nu < 2(\mathfrak{g}-1) \, \mbox{and deg}(D)=\nu \implies    \dim_{\mathbb{C}} Q(D)= 3(\mathfrak{g}-1)-\nu\\
\mbox{where } \ \, 3(\mathfrak{g}-1) =\dim_\mathbb{C}(C_2(X)). 
\end{array} 
\end{eqnarray} 
Consequently, if $D_1\in X^{(\nu_1)}$ and $D_2\in X^{(\nu_2)}$  satisfy: $D_1\leq D_2$ and $1\leq \nu_1<\nu_2<2(\mathfrak{g}-1)$ then
$\exists \, \a\in Q(D_1) \text{ but } \a\notin Q(D_2).$
In particular,
\begin{corollary}\label{alphaspecial}
If $0< deg(D) < 2(\mathfrak{g}-1)$ then for every $x_0 \in supp\,D$ there exists $\alpha \in Q(D-x_0)$ but $\alpha \notin Q(D).$
\end{corollary}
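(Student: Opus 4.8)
The plan is to reduce everything to the dimension count \eqref{2.07*}, since the corollary is precisely the statement that adjoining one point to the divisor $D-x_0$ strictly shrinks the associated space of quadratic differentials. First I would record the two structural facts to be used. Setting $D_1 := D - x_0$ and $D_2 := D$, the removal of a single copy of $x_0 \in \operatorname{supp} D$ produces an effective divisor $D_1$ with $D_1 \leq D_2$ and $\deg(D_1) = \deg(D) - 1$. From the defining relation $\alpha \in Q(D) \iff \operatorname{div}(\alpha) \geq D$, the inequality $D_1 \leq D_2$ immediately yields the inclusion $Q(D_2) \subseteq Q(D_1)$: any $\alpha$ whose zero divisor dominates $D$ a fortiori dominates $D - x_0$.

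Next I would compute the two dimensions and show they differ by exactly one. Since $0 < \deg(D) < 2(\mathfrak{g}-1)$, the degree $\nu = \deg(D)$ lies in the range $1 \leq \nu < 2(\mathfrak{g}-1)$ covered by \eqref{2.07*}, so $\dim_{\mathbb{C}} Q(D) = 3(\mathfrak{g}-1) - \deg(D)$. For $D_1$ there are two cases: if $\deg(D) \geq 2$ then $1 \leq \deg(D_1) < 2(\mathfrak{g}-1)$ and \eqref{2.07*} again applies, giving $\dim_{\mathbb{C}} Q(D_1) = 3(\mathfrak{g}-1) - (\deg(D)-1)$; if instead $\deg(D) = 1$ then $D = x_0$ and $D_1 = 0$, in which case the stated convention $Q(0) = C_2(X)$ gives $\dim_{\mathbb{C}} Q(D_1) = 3(\mathfrak{g}-1)$. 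In either case one finds $\dim_{\mathbb{C}} Q(D_1) = \dim_{\mathbb{C}} Q(D) + 1$.

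Finally, combining the inclusion $Q(D) \subseteq Q(D_1)$ with the strict inequality $\dim_{\mathbb{C}} Q(D) < \dim_{\mathbb{C}} Q(D_1)$ shows that the inclusion is proper, so any $\alpha$ in the nonempty complement $Q(D-x_0) \setminus Q(D)$ is the desired quadratic differential, completing the argument. I expect no genuine obstacle here beyond careful bookkeeping: the only point requiring attention is the boundary case $\deg(D)=1$, where $D_1$ degenerates to the trivial divisor and one must invoke the convention $Q(0)=C_2(X)$ rather than \eqref{2.07*} (whose hypothesis requires $\nu \geq 1$); the remaining thing to verify is that both degrees stay strictly below $2(\mathfrak{g}-1)$ so that \eqref{2.07*} is legitimately applicable to $D$ and, when $\deg(D)\geq 2$, to $D_1$ as well.
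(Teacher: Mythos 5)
Your proposal is correct and follows essentially the same route as the paper, which derives the corollary directly from the dimension formula \eqref{2.07*}: the inclusion $Q(D)\subseteq Q(D-x_0)$ together with the strict drop in dimension forces the inclusion to be proper. Your explicit treatment of the boundary case $\deg(D)=1$, where one must fall back on the convention $Q(0)=C_2(X)$ rather than \eqref{2.07*}, is a careful touch that the paper leaves implicit.
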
 
Moreover we have:
\begin{lemma}\label{divalfa}
The map $\alpha \to  div(\alpha)$ from $C_2(X) \setminus \{0\}$ to $X^{(4(g-1))}$ is continuous.
\end{lemma}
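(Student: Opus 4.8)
The plan is to prove sequential continuity, which suffices since $C_2(X)\setminus\{0\}$ is a finite-dimensional normed (hence metrizable) space. So I would fix $\alpha_0\in C_2(X)\setminus\{0\}$ with $\text{div}(\alpha_0)=\sum_{j=1}^n n_j q_j$ (distinct $q_j$, so $\sum_j n_j=4(\mathfrak{g}-1)$ by \eqref{2.03}), take any sequence $\alpha_k\to\alpha_0$ in $C_2(X)$, and show $\text{div}(\alpha_k)\to\text{div}(\alpha_0)$ in $X^{(4(\mathfrak{g}-1))}$ via the convergence criterion of Remark \ref{topoldiv}.

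First I would pass to local holomorphic data. Since $\dim_{\mathbb{C}}C_2(X)=3(\mathfrak{g}-1)<\infty$, all norms on $C_2(X)$ are equivalent, so $\alpha_k\to\alpha_0$ amounts to convergence of the coordinates in the basis \eqref{basis}. Fixing holomorphic $z$-coordinates at each $q_j$ and writing $\alpha_k=h_k(z)(dz)^2$, $\alpha_0=h_0(z)(dz)^2$ in the chart $\Omega_{r,q_j}$, this convergence forces $h_k\to h_0$ uniformly on compact subsets of $\Omega_{r,q_j}$, each $h$ being a fixed linear combination of the (fixed) holomorphic coefficients of the basis sections.

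Next I would apply Rouch\'e's theorem. Choose $\rho>0$ small enough that the geodesic balls $B(q_j;\rho)$ are pairwise disjoint and each contains no zero of $\alpha_0$ other than $q_j$; then $|h_0|\geq c_j>0$ on the compact circle $\partial B(q_j;\rho)$. By the uniform convergence $h_k\to h_0$, for $k$ large we have $|h_k-h_0|<c_j\leq|h_0|$ on $\partial B(q_j;\rho)$, so Rouch\'e's theorem yields exactly $n_j$ zeros of $h_k$ (with multiplicity) inside $B(q_j;\rho)$. Summing over $j$ produces $\sum_j n_j=4(\mathfrak{g}-1)$ zeros of $\alpha_k$ in $\bigcup_j B(q_j;\rho)$; since every $\alpha_k$ has exactly $4(\mathfrak{g}-1)$ zeros by \eqref{2.03}, no zeros of $\alpha_k$ lie outside this union.

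Finally I would conclude. The above holds for every sufficiently small $\rho>0$, so any subsequential limit of a tuple representing $\text{div}(\alpha_k)$ has, in each cluster, exactly $n_j$ points collapsing onto $q_j$; by Remark \ref{topoldiv} this is precisely the assertion $\text{div}(\alpha_k)\to\text{div}(\alpha_0)$. The only step requiring genuine care — beyond the routine local Rouch\'e count — is the global bookkeeping: one must invoke the fixed total degree $4(\mathfrak{g}-1)$ to prevent zeros of $\alpha_k$ from drifting away into the complement of $\bigcup_j B(q_j;\rho)$, and it is exactly this that upgrades the local lower bounds on multiplicities to equalities and so guarantees the correct limiting divisor.
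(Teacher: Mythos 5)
Your proof is correct and takes essentially the same route as the paper's: both arguments combine the fact that, under local uniform convergence of the holomorphic coefficient functions, zeros converge with multiplicity, together with the fixed total degree $4(\mathfrak{g}-1)$ from \eqref{2.03} to prevent zeros from escaping or appearing elsewhere. The only organizational difference is directional — the paper extracts a convergent subsequence of zero-tuples of $\alpha_k$ in $X^{\nu}$ and identifies its limit with $\text{div}(\alpha_0)$, whereas you localize the zeros of $\alpha_k$ around those of $\alpha_0$ via Rouch\'e's theorem, which makes explicit the multiplicity bookkeeping that the paper's proof asserts more tersely.
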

\begin{proof}
Let $\alpha_k \to \alpha \in C_2(X) \setminus\{0\},$ we set $D_k := div(\alpha_k)$ and $D_0 = div(\alpha).$ Up to a subsequence, we may assume that $D_k:=\pi(\mathbf{x_k}) \to \bar{D}$ with suitable $\mathbf{x_k} \to \mathbf{x}$ in $X^{4(g-1)}$ as $k \to + \infty$ and $\pi(\mathbf{x}) = \bar{D}.$ We need to show that $\bar{D} = D_0.$ 
Let $\nu:=4(g-1)$ and set  $\mathbf{x_k} = (\bar{x}_{1,k}, \ldots,\bar{x}_{\nu_k}).$ We know that, for $j=1,\ldots, \nu,$ the point $\bar{x}_{j,k}$ is a zero of $\alpha_k$ which appears in the $\nu-ple \,\, \mathbf{x_k}$ according to its multiplicity, . 
By setting: $\mathbf{x} = (\bar{x}_{1}, \ldots,\bar{x}_{\nu}),$ then for $j=1,\ldots, \nu$ the point $\bar{x}_{j}$ is a zero of $\alpha$  and it appears in the $\nu-ple\,\,\mathbf{x}$ with a multiplicity given by the sum of the multiplicities of all components of $\mathbf{x_k}$ which converge to $\bar{x}_j.$   Therefore, the sum of the multiplicities of each $x_j$  add up to $\nu,$ and consequently there are no other zeros of $\alpha$ except those in $\mathbf{x}.$ Thus we conclude that necessarily: $div( \alpha) = \bar{D},$ as claimed.
\end{proof}

Next, for an $M$ dimensional complex vector space $V,$ we denote by $\mathbb{P}(V)$ the projective space relative to $V,$ with $dim( \mathbb{P}(V)) = M-1.$\\ Moreover, for $1\leq l \leq M$
we let $Gr(l,V)$ be the Grassmanian of $l$ dimensional complex subspaces of $V.$ We know that $Gr(l,V)$ is  a compact complex manifold of complex dimension $l(M-l)$, see  e.g. \cite{Tar-Tra_1}.

In our context, we are naturally interested to the vector space $V=C_{2}(X)$ and so  $M=3(\textit{g} -1)$ and $\mathbb{P}(V^*)\simeq \mathbb{P}(\mathcal{H}^{0,1}(X,E))$ (recall  \eqref{dual}). By virtue of \eqref{2.07*}
we have:
\begin{equation}\label{trapmap}
\begin{array}{l}
\mbox{if} \qquad 1\leq \nu < 2(\textit{g}-1)\,\,\mbox{and}\,\, l=3(\textit{g} -1)-\nu, 
\mbox{then it is well defined the map:}\\ \\
\quad \Psi: X^{(\nu)} \to Gr(l,V)  \quad D \to Q(D)\\ 
\end{array}
\end{equation}
and actually it is shown in  \cite{Tar-Tra_1} that $\Psi$ is \underline{holomorphic} (see \cite{Tar-Tra_1} for details). Thus we derive: $$D_k \to D \,\, \mbox{ in } \, X^{(\nu)} \implies Q(D_k) \to Q(D),\, k \to + \infty,$$ 
$$\mbox{if}\,\alpha_k \in Q(D_k): \,\,\alpha_k \to \alpha \mbox{ in } C_{2}(X)\,\, \implies \alpha \in Q(D).$$ 
where in the second statement above we have used  Lemma \ref{divalfa}, and actually also its reverse statement holds as follows:
\begin{lemma}\label{approximation}
Let $D_k \to D \,\, \mbox{ in } \, X^{(\nu)} \,\,\mbox{as }\, k \to + \infty \,\mbox{ with }\, 0< \nu < 2(g-1).$ \\
If $\alpha \in Q(D)$ then there exists $\alpha_k \in Q(D_k)$ such  that  $\alpha_k \to \alpha,$  as $ k \to + \infty.$   
\end{lemma}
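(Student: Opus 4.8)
The plan is to obtain the lemma softly, as a combination of two facts already recorded above: the constancy of $\dim_{\mathbb{C}}Q(D')$ along the divisors in play, and the continuity of the assignment $D'\mapsto Q(D')$. The approximating differentials will then be produced by orthogonal projection. First I would observe that, since $0<\nu<2(\mathfrak{g}-1)$, formula \eqref{2.07*} gives $\dim_{\mathbb{C}}Q(D')=3(\mathfrak{g}-1)-\nu=:l$ for \emph{every} effective divisor $D'$ of degree $\nu$; in particular $\dim_{\mathbb{C}}Q(D_k)=\dim_{\mathbb{C}}Q(D)=l$ for all $k$. Hence the map $\Psi$ of \eqref{trapmap}, $D'\mapsto Q(D')$, is well defined into the \emph{single} Grassmannian $Gr(l,V)$ with $V=C_2(X)$, and by \cite{Tar-Tra_1} it is holomorphic, in particular continuous. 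Therefore $D_k\to D$ in $X^{(\nu)}$ forces $Q(D_k)\to Q(D)$ in $Gr(l,V)$.

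To extract the sequence $\alpha_k$, I would equip $C_2(X)$ with the $L^2$-structure from \eqref{norm} and use the standard realization of $Gr(l,V)$ as the set of rank-$l$ orthogonal projections of $V$: the map $Gr(l,V)\to \mathrm{End}(V)$, $W\mapsto P_W$, is a homeomorphism onto its image, so the convergence $Q(D_k)\to Q(D)$ is equivalent to $P_{Q(D_k)}\to P_{Q(D)}$ in $\mathrm{End}(V)$ (all norms being equivalent on the finite-dimensional space $V$). Given $\alpha\in Q(D)$, I would then simply set $\alpha_k:=P_{Q(D_k)}(\alpha)$. By construction $\alpha_k\in Q(D_k)$, and since $\alpha\in Q(D)$ implies $P_{Q(D)}(\alpha)=\alpha$, one gets $\alpha_k=P_{Q(D_k)}(\alpha)\to P_{Q(D)}(\alpha)=\alpha$, which is exactly the asserted approximation.

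The only genuinely delicate point, and the reason the hypothesis $0<\nu<2(\mathfrak{g}-1)$ cannot be dropped, is the constancy of $\dim_{\mathbb{C}}Q(D')$ that keeps $\Psi$ inside one fixed Grassmannian. Once $\nu\geq 2(\mathfrak{g}-1)$ the Riemann-Roch count in \eqref{2.07*} need no longer be exact, the dimension of $Q(D')$ may jump upward at special divisors, and then the limiting space $Q(D)$ could be strictly larger than the nearby $Q(D_k)$; in that situation the projection scheme necessarily fails, since the extra directions of $Q(D)$ admit no approximation within the smaller subspaces. Within the stated range this obstruction is excluded, and the projection argument above goes through verbatim.
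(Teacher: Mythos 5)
Your proposal is correct, and its skeleton coincides with the paper's: both arguments first use the dimension count \eqref{2.07*} (valid precisely because $0<\nu<2(\mathfrak{g}-1)$) to place all the spaces $Q(D_k)$, $Q(D)$ in one fixed Grassmannian $Gr(l,V)$, $l=3(\mathfrak{g}-1)-\nu$, and then invoke the continuity (holomorphicity) of $\Psi$ from \cite{Tar-Tra_1} to get $Q(D_k)\to Q(D)$ there. Where you diverge is in the extraction step. The paper works in an explicit matrix chart of the Grassmannian around $Q(D)$: it represents each subspace as the kernel of an $(M-l)\times M$ matrix normalized to have an identity sub-block, solves the resulting homogeneous linear systems to produce canonical bases $E_k=\{s_{1,k},\dots,s_{l,k}\}$ of $Q(D_k)$ converging to a canonical basis $E$ of $Q(D)$, writes $\alpha=\sum_j\lambda_j s_j$, and sets $\alpha_k=\sum_j\lambda_j s_{j,k}$. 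You instead realize the Grassmannian inside $\mathrm{End}(V)$ as rank-$l$ orthogonal projections (for the $L^2$-structure \eqref{norm}) and take $\alpha_k:=P_{Q(D_k)}(\alpha)$; since $\alpha\in Q(D)$ gives $P_{Q(D)}(\alpha)=\alpha$, convergence of the projections yields $\alpha_k\to\alpha$ at once. Your route buys a cleaner, basis-free and chart-free argument, at the cost of invoking the (standard, but unproved in the paper) fact that the Grassmannian topology agrees with the topology induced by the projection embedding $W\mapsto P_W$; the paper's chart computation, though more pedestrian, makes the convergence of approximants completely explicit. Your closing remark on why the degree restriction is essential --- constancy of $\dim_{\mathbb{C}}Q(D')$, which fails once the divisor can be special --- is also accurate and matches the role this hypothesis plays in \eqref{trapmap}.
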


\begin{proof}
According to the hermitian product \eqref{norm} for $V=C_{2}(X)$ and with respect to a given orthonormal basis as in \eqref{basis}, we can identify $C_{2}(X)$ with $\mathbb{C}^M \,\, \mbox{ and }\,M=3(\textit{g} -1). $ In this way, each space $W \in Gr(\mathbb{C}^M,l)$ can be seen as the kernel of a  $(M-l) \times M$ complex matrix of rank 
$M-l.$ Moreover, for $Gr(\mathbb{C}^M,l)$ we obtain local charts in $\mathbb{C}^{(M-l)l}$ by considering  all $(M-l) \times M$ complex matrices with a fixed  $(M-l) \times (M-l)$ sub-matrix equal to the identity. In this way, around $W=Q(D)$  (hence $l= 3(\mathfrak{g}-1)-\nu$)  we obtain local holomorphic coordinates in $\mathbb{C}^{(M-l)l},$ and
since $Q(D_k) \to Q(D)$ as $k \to + \infty,$ we have that $Q(D_k)$ lies in such coordinate neighborhood of $Q(D)$ for $k$ large.  At this point, we can identify in a canonical way a basis $E_k = \{s_{1,k}, \ldots,s_{l,k} \}$ for $Q(D_k)$ among all solutions of the corresponding homogeneous linear system. Analogously, we adopt the same canonical choice to obtain a basis $E = \{s_1, \ldots,s_l\}$ for $Q(D).$  As a consequence, we find that: $s_{j,k} \to s_j \,\,\mbox{as }\, k \to + \infty.$  
Hence if $\alpha \in Q(D)$ then we can write: $\alpha = \sum_{j=1}^{l} \lambda_j s_j,$ and it suffices to take:  $\alpha_{k}=\sum_{j=1}^{l} \lambda_j s_{j,k} \in Q(D_k)$
to find: $\alpha_k \to \alpha,$ as $k \to + \infty,$ as claimed. 
\end{proof}
\medskip
\medskip
For later use, we point out the following well known convergence properties for holomorphic functions:
\begin{lemma}\label{C_k}
Let $n_1, \ldots, n_s$ be positive integers and let $a_k(z) = (z-z_{1,k})^{n_1}(z-z_{2,k})^{n_2} \ldots (z- z_{s,k})^{n_s}C_k(z),$ with $z_{j,k} \in B_{\delta} : z_{j,k} \to 0,  \forall j=\{1 \ldots,s\}$ and $C_k$ holomorphic in $B_\delta$. If  $a_k \to a, $ uniformly on compact sets of $B_{\delta}$\, as $k \to + \infty,$ \, then for $n = \sum_{j=\{1 \ldots,s\}} n_j$ we have: $a(z) = z^n C(z)$ with $C_k \to C$ uniformly on compact sets of $B_{\delta},$  $C$ holomorphic in $B_{\delta}$ and $C(0) =\frac{1}{n!}\frac{\partial^n a}{\partial z^n}(0).$
\end{lemma}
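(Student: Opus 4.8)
The plan is to factor $a_k = P_k\, C_k$, where $P_k(z) := \prod_{j=1}^s (z - z_{j,k})^{n_j}$ is the monic polynomial of degree $n = \sum_j n_j$ carrying the collapsing zeros. Since each $z_{j,k} \to 0$, the elementary symmetric functions of the roots converge to those of $z^n$, so $P_k \to z^n$ uniformly on compact subsets of $B_\delta$. Moreover $a$, being a locally uniform limit of holomorphic functions, is itself holomorphic on $B_\delta$ by the Weierstrass convergence theorem. The goal is then to show that the ``quotient'' factor $C_k$ also converges, and that the two limits combine into $a = z^n C$.

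First I would fix radii $0 < r < R < \delta$ and work on the closed annulus $A = \{ r \le |z| \le R\}$. For $k$ large all the roots $z_{j,k}$ lie in $\{|z| < r\}$, so $P_k$ has no zeros on $A$; being a uniform limit of $z^n$, which is bounded below by $r^n$ there, it satisfies $|P_k| \ge c > 0$ on $A$ for $k$ large. Consequently $C_k = a_k / P_k$ converges uniformly on $A$ to $a/z^n$. In particular $\{C_k\}$ is uniformly Cauchy on the circle $\{|z| = r\}$.

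The crucial step is to upgrade this to uniform convergence up to and including the origin, where the factor $P_k$ degenerates. Here I would invoke the maximum modulus principle: each $C_k$ is holomorphic on the full disk $\{|z| \le r\}$ by hypothesis, so $C_k - C_m$ is holomorphic there and attains its maximum modulus on the boundary circle $\{|z| = r\}$. Since $\{C_k\}$ is uniformly Cauchy on that circle, it is uniformly Cauchy on the whole disk, and therefore converges uniformly on compact subsets of $B_\delta$ to a holomorphic limit $C$.

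Finally, passing to the limit in $a_k = P_k\, C_k$ gives $a = z^n C$ on $B_\delta$, and reading off the coefficient of $z^n$ in the Taylor expansion of $a$ at the origin yields $C(0) = \frac{1}{n!}\frac{\partial^n a}{\partial z^n}(0)$, as claimed. I expect the only genuine obstacle to be precisely the passage through the origin in the third step: away from $0$ the convergence of $C_k$ is immediate from dividing by a polynomial that is bounded away from zero, but at $0$ the factor $P_k$ collapses and one cannot divide directly. The maximum-principle argument is exactly what converts the annular (boundary) convergence into interior convergence, ruling out any loss of mass or spurious singularity of $C$ at the origin.
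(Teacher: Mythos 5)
Your proof is correct, and it takes a genuinely different route from the paper's. The paper argues by induction, first on the number $s$ of collapsing zeros and then on the multiplicity, reducing everything to the case of a single simple zero ($s=1$, $n_1=1$); there it writes the quotient as a difference quotient and represents it by the integral $C_k(z)=\int_0^1 a_k'(tz+(1-t)z_k)\,dt$, using that $a_k'\to a'$ locally uniformly (Cauchy estimates) to pass to the limit. You instead treat all the zeros at once: you divide by the monic polynomial $P_k(z)=\prod_j(z-z_{j,k})^{n_j}$, which converges to $z^n$ and is bounded below on any annulus $r\le |z|\le R$ once the roots have entered $\{|z|<r\}$, so that $C_k=a_k/P_k\to a/z^n$ uniformly there; the key step, absent from the paper, is then the maximum modulus principle applied to $C_k-C_m$ on the closed disk $\{|z|\le r\}$, which upgrades uniform Cauchyness on the circle $\{|z|=r\}$ to the whole disk and thus carries the convergence across the degenerate point $z=0$. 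Your argument buys a cleaner, induction-free treatment that handles arbitrary configurations of collapsing zeros in one stroke and isolates the single genuine difficulty (the origin) via a standard removal-of-degeneracy trick; the paper's argument buys a more elementary, hands-on computation in which the limit quotient is exhibited explicitly by an integral formula rather than obtained abstractly as a uniform limit. Both rest on the Weierstrass convergence theorem and yield $C(0)=\frac{1}{n!}\frac{\partial^n a}{\partial z^n}(0)$ by reading off the Taylor coefficient, exactly as in the paper.
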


\begin{proof}
 
 Although well known we sketch the proof of the above statement for completeness. Clearly we need to  prove only that, $C_k \to C$ as $k \to + \infty,$ uniformly on compact sets of $B_{\delta}.$ To this purpose we use simply an induction argument first with respect to the index $s$ and then over the index $n$. Hence, we need only to treat the case where $s =1$ and $n=n_1= 1.$ Thus we set: $ z_{1,k}=z_k.$ Since $a_k$ converges to $a$ uniformly on compact sets of $B_{\delta}$  we immediately see that $a_k(z_k) =0= a(0).$ Moreover $a$ is holomorphic on $B_{\delta}$ and $a(z)=zC(z)$ for suitable $C$ holomorphic in $B_{\delta}.$ Also, the complex derivative $a_k'$ converges uniformly to $a'$ on compact sets of $B_{\delta}.$   Thus,  $C_k(z) =  \frac{a_k(z) - a_k(z_k)}{z-z_k} = \int_{0}^{1} a_k'(t z +(1-t)z_k)dt$ and 
$C(z) = \frac{a(z) - a(0)}{z} = \int_{0}^{1} a'(tz)dt,$ by which we immediately derive uniform  convergence of $C_k$ to $C$ on compact sets of $B_{\delta}.$ Finally, it suffices to consider the Taylor expansion of $a$ at $z=0$ to find:  
$C(0) = \frac{1}{n!}\frac{\partial^n a}{\partial z^n}(0),$ as claimed. \end{proof}
\vskip.1cm

Recall the holomorphic map $\Psi$ in \eqref{trapmap}. By the proper mapping theorem (see Chapter II 8.2 of \cite{Demailly})   we know that the image $\Psi(X^{(\nu)})$ is a closed analytic sub-variety of $Gr(M-\nu, V)$ of dimension at most $\nu,$ ( recall: $ V=C_2(X)\,\,\mbox{and }\,\, M = 3(\mathfrak{g}-1)).$ \\
Moreover, for $[\beta] \in \mathcal{H}^{0,1}(X,E)\setminus \{ 0\}\,\mbox{ we recall that }\, [\beta]_{\mathbb{P}} \in  \mathbb{P}(\mathcal{H}^{0,1}(X,E))$ denotes the projective class identified by $[\beta],$ namely:
$$
[\beta]_{\mathbb{P}} =\{\, [\lambda\beta] \in C_2(X) \,\, \forall \lambda \in \mathbb{C} \setminus \{0\} \}.$$ In view of the duality in \eqref{dual},  it makes sense to define:\\ \\ $[\beta]_{|Q(D)}\equiv 0 \iff
\int_X \beta \wedge \alpha = \int_X \beta_0 \wedge \alpha =0, \quad \forall \alpha \in Q(D)$\\ \\ 
with harmonic $\beta_0 \in [\beta].$ 
Clearly: $[\beta]_{|Q(D)}\equiv 0 \iff [\lambda\beta]_{|Q(D)}\equiv 0\,\, \forall \lambda \in \mathbb{C} \setminus \{0\},$ and so such an "orthogonality"  condition is well defined in terms of the projective class: $[\beta]_{\mathbb{P}}.$ Therefore, we can let: 
$$  [\beta]_{\mathbb{P}}( Q(D)) \equiv 0 \iff [\beta]_{|Q(D)}\equiv 0,\,\, \mbox{for any } [\beta]\in [\beta]_{\mathbb{P}} $$
and consider the set: 
\begin{equation}\label{Sigma} \Sigma_\nu := \{ ([\beta]_{\mathbb{P}}, Q(D)) : [\beta]_{\mathbb{P}}( Q(D)) \equiv 0 \} \subseteq \, \mathbb{P}(V^*) \times \Psi(X^{(\nu)}) \end{equation}
(recall $\mathbb{P}(\mathcal{H}^{0,1}(X,E)) \simeq \mathbb{P}(V^*))$ which defines a closed analytic sub-variety of  $\mathbb{P}(V^*) \times \Psi(X^{(\nu)}),$ see \cite{Tar-Tra_1} for details.
Let,
$$p_1 :  \mathbb{P}(V^*) \times \Psi(X^{(\nu)}) \to \mathbb{P}(V^*)\,\,\mbox { and }\,   p_2 :  \mathbb{P}(V^*) \times \Psi(X^{(\nu)}) \to \Psi(X^{(\nu)})$$ be the canonical projections. From \cite{Tar-Tra_1} we know also that, $p_2(\Sigma_{\nu}) =  \Psi(X^{(\nu)})$ and  the fibers of the map $p_2$ restricted to $\Sigma_{\nu}$ are $\nu-1$ dimensional projective spaces. 
Therefore,  if $Y$ is a closed irreducible analytic sub-variety of  $\Psi(X^{(\nu)})$ then $\Sigma_\nu \cap p_2^{-1}(Y)$ is also an irreducible closed analytic sub-variety of  dimension: dim$(Y)$+ $\nu - 1.$

Now, we fix $k\in \mathbb{N}$ such that: $1 \leq k \leq \mathfrak{g}-1$ and  consider the \underline{finite} set: 
$$\mathfrak{I}_k = \left\{ \begin{array}{l}({\bf{m}},{\bf{N}})\in \mathbb{N}^k\times\mathbb{N}^k\, \mbox{ with } {\bf{m}} = (m_1, \ldots,m_k),\, {\bf{N}} = (N_1,\ldots,N_k): \\ 
 m := \sum_{j \in \{1, \ldots,k \}} m_j \leq \mathfrak{g}-1 \ \mbox{and} \ 1 \leq N_j \leq 2m_j-1, \, \,   1 \leq j \leq k. \end{array} \right. $$
For given $1 \leq k \leq \mathfrak{g}-1$ and $ ({\bf{m}},{\bf{N}}) \in \mathfrak{I}_k$ we set $N := \sum_{ j \in \{1, \ldots,k\}}N_j,$ 
so that: $1\leq N\leq 2m-k,$ and we define the set of corresponding effective divisors: 
\begin{equation}\label{Ykmn} Y_{ (k,{\bf{m}},{\bf{N}})} = \{ N_1 x_1 + N_2 x_2 + \ldots N_k x_k \in X^{(N)}   \ \mbox{ with $(x_1, \ldots, x_k) \in X^k $}  \},\end{equation}
note for instance that,
$Y_{(k,(1,1 \ldots,1), (1,1 \ldots 1))} = X^{(k)}.$ The following holds:
\begin{lemma}\label{dimk}

For given $k\in \mathbb{N}:$ $1 \leq k \leq g-1$ and $ ({\bf{m}},{\bf{N}}) \in \mathfrak{I}_k$ we have:\\
(i) the set $\Psi(Y_{ (k,{\bf{m}},{\bf{N}})})$ is an irreducible closed analytic sub-variety of $\Psi(X^{(N)})$ of dimension at most $k,$ \\ 
(ii) the set $\Sigma_{(k,{\bf{m}},{\bf{N}})} := \Sigma_N \cap p_2^{-1}(\Psi(Y_{ (k,{\bf{m}},{\bf{N}})}))$ is an irreducible closed analytic sub-varieties of  $\mathbb{P}(V^*) \times \Psi(X^{(\nu)})$ of  dimension at most $k + N -1, $\\  
(iii) the set $$\tilde{\Sigma}_{(k,\bf{m},\bf{N})}:= p_1(\Sigma_{(k,\bf{m},\bf{N})}) $$ is an irreducible closed analytic sub-variety in $\mathbb{P}(V^*)$ of dimension at most $k+N-1$ and $\tilde{\Sigma}_{(k,\bf{m},\bf{N})}\subsetneq \mathbb{P}(V^*).$ 
\end{lemma}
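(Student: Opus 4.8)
The whole statement reduces to two principles already in hand: first, that the holomorphic image of an irreducible compact analytic variety is again an irreducible closed analytic sub-variety of dimension no larger than the source (the proper mapping theorem, Chapter~II~8.2 of \cite{Demailly}); and second, the fibration structure of $\Sigma_N$ over $\Psi(X^{(N)})$ recalled just before the lemma, namely that for any closed irreducible analytic $Y \subseteq \Psi(X^{(N)})$ the set $\Sigma_N \cap p_2^{-1}(Y)$ is irreducible, closed, and of dimension $\dim(Y) + N - 1$. Before invoking $\Psi$ I would first record that it is indeed defined on $X^{(N)}$: from the definition of $\mathfrak{I}_k$ one has $N = \sum_j N_j \le \sum_j (2m_j - 1) = 2m - k \le 2(\mathfrak{g}-1) - k$, while $N \ge k \ge 1$, so $1 \le N < 2(\mathfrak{g}-1)$ and \eqref{trapmap} applies.

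For (i) the plan is to exhibit $Y_{(k,\mathbf{m},\mathbf{N})}$ as a holomorphic image. The assignment $(x_1, \ldots, x_k) \mapsto \sum_{j=1}^k N_j x_j$ factors as the ``repetition'' embedding $X^k \hookrightarrow X^N$ (repeating $x_j$ exactly $N_j$ times) followed by the holomorphic projection $\pi : X^N \to X^{(N)}$; hence it is a holomorphic map $\phi : X^k \to X^{(N)}$ with $\phi(X^k) = Y_{(k,\mathbf{m},\mathbf{N})}$ (recall \eqref{Ykmn}). Since $X^k$ is compact and irreducible of dimension $k$, the proper mapping theorem gives that $Y_{(k,\mathbf{m},\mathbf{N})}$ is a closed irreducible analytic sub-variety of $X^{(N)}$ of dimension at most $k$. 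Composing with the holomorphic $\Psi$ and applying the same theorem once more shows that $\Psi(Y_{(k,\mathbf{m},\mathbf{N})}) = (\Psi \circ \phi)(X^k)$ is a closed irreducible analytic sub-variety of $\Psi(X^{(N)})$ of dimension at most $k$, which is exactly (i).

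For (ii) I would simply feed $Y = \Psi(Y_{(k,\mathbf{m},\mathbf{N})})$ from (i) into the fibration fact: $\Sigma_{(k,\mathbf{m},\mathbf{N})} = \Sigma_N \cap p_2^{-1}(Y)$ is then irreducible, closed, and of dimension $\dim(Y) + N - 1 \le k + N - 1$. For (iii), I note that $\Sigma_{(k,\mathbf{m},\mathbf{N})}$ is a closed, hence compact, irreducible sub-variety of $\mathbb{P}(V^*) \times \Psi(X^{(N)})$, so applying the proper mapping theorem to the holomorphic projection $p_1$ shows $\tilde{\Sigma}_{(k,\mathbf{m},\mathbf{N})} = p_1(\Sigma_{(k,\mathbf{m},\mathbf{N})})$ is a closed irreducible analytic sub-variety of $\mathbb{P}(V^*)$ of dimension at most $k + N - 1$. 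The one genuinely new point is the \emph{proper} containment, which I would extract from the dimension bound alone: chaining the constraints in $\mathfrak{I}_k$ yields
\[
\dim \tilde{\Sigma}_{(k,\mathbf{m},\mathbf{N})} \le k + N - 1 \le 2m - 1 \le 2(\mathfrak{g}-1) - 1 = 2\mathfrak{g} - 3,
\]
whereas $\dim \mathbb{P}(V^*) = \dim_{\mathbb{C}} C_2(X) - 1 = 3\mathfrak{g} - 4$ by \eqref{dim}. Since $\mathfrak{g} \ge 2$ forces $2\mathfrak{g} - 3 < 3\mathfrak{g} - 4$, the irreducible sub-variety $\tilde{\Sigma}_{(k,\mathbf{m},\mathbf{N})}$ cannot exhaust the irreducible $\mathbb{P}(V^*)$, so $\tilde{\Sigma}_{(k,\mathbf{m},\mathbf{N})} \subsetneq \mathbb{P}(V^*)$. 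Thus the analytic-geometric content is entirely carried by the two cited principles, and the only thing demanding care is the index bookkeeping, which simultaneously guarantees that $\Psi$ is defined and produces the strict dimension drop $2\mathfrak{g}-3 < 3\mathfrak{g}-4$ responsible for proper containment.
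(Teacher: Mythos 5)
Your proposal is correct and follows essentially the same route as the paper: the proper mapping theorem applied to the holomorphic map $X^k \to X^{(N)}$, $(x_1,\ldots,x_k)\mapsto \sum_j N_j x_j$, composition with $\Psi$, the fibration structure of $\Sigma_N$ over $\Psi(X^{(N)})$ for (ii), and a final application of the proper mapping theorem to $p_1$ together with the dimension count $k+N-1\leq 2\mathfrak{g}-3 < 3\mathfrak{g}-4$ for the proper containment in (iii). Your added bookkeeping (checking $1\leq N < 2(\mathfrak{g}-1)$ so that $\Psi$ is defined, and factoring the map through the repetition embedding) only makes explicit what the paper leaves implicit.
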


\begin{proof}

The set $Y_{(k,{\bf{m}},{\bf{N}})}$ is the image of the following holomorphic proper map: $$X^k \to X^{(N)}, \quad (x_1, \ldots,x_k) \to N_1x_1 + \ldots + N_k x_k.$$ Again, by the proper mapping  theorem  we see that, $Y_{k,{\bf{m}},{\bf{N}}}$ defines a closed irreducible analytic sub-variety of dimension at most $k,$ and consequently also the set $\Psi(Y_{k,{\bf{m}},{\bf{N}}}),$ admits the same properties. This implies that,  $\Sigma_{(k,{\bf{m}},{\bf{N}})} := \Sigma_N \cap p_2^{-1}(\Psi(Y_{ (k,{\bf{m}},{\bf{N}})}))$ is an irreducible closed analytic sub-variety of  $\mathbb{P}(V^*) \times \Psi(X^{(\nu)})$ of  dimension at most $k + N -1.$ Thus, by using once more the proper mapping theorem, we get in addition that,  
$p_1(\Sigma_{(k,\bf{m},\bf{N})}) \subseteq \mathbb{P}(V^*)$ 
is also an irreducible closed analytic subset of dimension at most $k+N-1.$ Since, $1 \leq k+N-1\leq 2g-3 < dim \mathbb{P}(V^*)=3g-4,$ we may conclude that: $\tilde{\Sigma}_{(k,\bf{m},\bf{N})}\subsetneq \mathbb{P}(V^*),$ as claimed.

\end{proof}

\begin{remark}\label{sigma}
Clearly, if for a  Beltrami differential $\beta\neq 0$ we have:\\
$\int_X \beta \wedge \alpha =0 \quad \forall \alpha \in Q(D),$ with a divisor $D\in Y_{ (k,{\bf{m}},{\bf{N}})}$ then  $[\beta]_{\mathbb{P}} \in \tilde{\Sigma}_{(k,\bf{m},\bf{N})}. $ 
\end{remark}

In this way, by varying $k \in \{ 1,...,\mathfrak{g}-1 \}$ and $ ({\bf{m}},{\bf{N}}) \in {\mathfrak{I}}_k$ we obtain a finite family of irreducible analytic sub-varieties of $\mathbb{P}(V^*),$ which may or may not be distinct and in any event the following holds: 
\begin{corollary}\label{dimproj}
The set $$ \tilde{\Sigma}_{\mathfrak{g}}:= \bigcup_{1 \leq k \leq g-1} \bigcup_{({\bf{m}},{\bf{N}}) \in \mathfrak{I}_k} \tilde{\Sigma}_{(k,\bf{m},\bf{N})}$$
defines a  closed complex analytic  sub-variety of $\mathbb{P}(V^*)$ (possibly reducible) of dimension at most $2\mathfrak{g}-3,$ and so of codimension at least $g-1$ in $\mathbb{P}(V^*).$ In particular $\tilde{\Sigma}_{\mathfrak{g}} \subsetneq \mathbb{P}(V^*)$ and $\mathbb{P}(V^*)\setminus{\tilde{\Sigma}_{\mathfrak{g}}}$ defines a non empty Zariski open set.
\end{corollary}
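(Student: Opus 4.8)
The plan is to deduce the statement from Lemma \ref{dimk} by a purely combinatorial dimension count, with no further geometric input. First I would observe that the union defining $\tilde{\Sigma}_{\mathfrak{g}}$ is \emph{finite}: the outer index $k$ runs over the finite set $\{1,\dots,\mathfrak{g}-1\}$, and for each such $k$ the set $\mathfrak{I}_k$ is finite, since it consists of pairs of integer vectors constrained by $m_j\geq 1$, $\sum_j m_j=m\leq\mathfrak{g}-1$ and $1\leq N_j\leq 2m_j-1$. By Lemma \ref{dimk}(iii) each member $\tilde{\Sigma}_{(k,{\bf{m}},{\bf{N}})}$ is a closed (irreducible) analytic sub-variety of $\mathbb{P}(V^*)$. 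Since a finite union of closed analytic sub-varieties is again a closed analytic sub-variety (possibly reducible), $\tilde{\Sigma}_{\mathfrak{g}}$ is closed analytic and its dimension equals the maximum of the dimensions of the finitely many strata $\tilde{\Sigma}_{(k,{\bf{m}},{\bf{N}})}$.

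Next I would bound this maximum uniformly. Lemma \ref{dimk}(iii) provides $\dim\tilde{\Sigma}_{(k,{\bf{m}},{\bf{N}})}\leq k+N-1$ with $N=\sum_{j=1}^{k}N_j$. The crucial estimate is obtained by inserting the defining constraints of $\mathfrak{I}_k$: from $N_j\leq 2m_j-1$ one gets $N=\sum_{j=1}^{k}N_j\leq\sum_{j=1}^{k}(2m_j-1)=2m-k$, whence $k+N-1\leq k+(2m-k)-1=2m-1\leq 2(\mathfrak{g}-1)-1=2\mathfrak{g}-3$, using $m\leq\mathfrak{g}-1$. Thus every irreducible stratum, and therefore $\tilde{\Sigma}_{\mathfrak{g}}$ itself, has dimension at most $2\mathfrak{g}-3$, independently of $k$ and of the particular $({\bf{m}},{\bf{N}})$.

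Finally I would conclude. Recalling that $\dim\mathbb{P}(V^*)=3\mathfrak{g}-4$, the codimension of $\tilde{\Sigma}_{\mathfrak{g}}$ is at least $(3\mathfrak{g}-4)-(2\mathfrak{g}-3)=\mathfrak{g}-1\geq 1$ for $\mathfrak{g}\geq 2$. In particular the inclusion $\tilde{\Sigma}_{\mathfrak{g}}\subseteq\mathbb{P}(V^*)$ is proper, and the complement of a proper closed analytic sub-variety of the irreducible projective space $\mathbb{P}(V^*)$ is a non-empty Zariski open (hence dense) set, as asserted.

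The genuine difficulty has already been absorbed into Lemma \ref{dimk}, where the proper mapping theorem simultaneously yields closedness, irreducibility, and the per-stratum bound $k+N-1$; the present step therefore carries no real obstacle and amounts to assembling finitely many closed analytic sets and running the displayed arithmetic. The only point deserving care is that the constraints defining $\mathfrak{I}_k$ — namely $N_j\leq 2m_j-1$, which encodes \eqref{vincoli} under the identification $N_j=N_{x_j}+1$, together with $m\leq\mathfrak{g}-1$, which encodes \eqref{total mass} — are exactly calibrated so that the bound $k+N-1$ telescopes to $2\mathfrak{g}-3$ uniformly in $k$; as a consistency check one verifies that in the borderline case $\mathfrak{g}=2$ this gives $\dim\tilde{\Sigma}_{2}\leq 1$, in agreement with $\tilde{\Sigma}_{\mathfrak{g}=2}=\tau(X)$.
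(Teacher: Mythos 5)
Your proof is correct and follows essentially the same route as the paper, which simply states that the corollary "follows immediately from Lemma \ref{dimk}"; your write-up spells out the implicit details (finiteness of the index sets, stability of closed analytic sets under finite unions, and the arithmetic $k+N-1\leq 2m-1\leq 2\mathfrak{g}-3$, which in the paper appears inside the proof of Lemma \ref{dimk}). Nothing is missing and no correction is needed.
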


\begin{proof}
It follows immediately    from Lemma \ref{dimk} \end{proof}
Please recall that a non-empty Zariski open set in the (connected) complex manifold $\mathbb{P}(V^*)$ is open and dense in the usual topology of $\mathbb{P}(V^*),$ and more precisely it has full mass with respect to any smooth volume form on $\mathbb{P}(V^*).$

\begin{remark}
It can be proved that 
$ \tilde{\Sigma}_{(\mathfrak{g}-1,\bf{1},\bf{1})}= p_1(\Sigma_{(\mathfrak{g}-1,(1,1\ldots,1),(1,1,\ldots,1))})$ has dimension exactly $2g-3.$ \end{remark} 
\section{Asymptotics and the proof of the Main Results.}\label{Asymptotics}

Given the  pair: $(X,[\beta]) \in \mathcal{T}_{\mathfrak{g}}(S) \times \mathcal{H}^{0,1}(X,E),$ let 
$\beta_{0} \in [ \beta ]$ be the harmonic representative of the class $[\beta].$ As observed in the Introduction that, if $g$
(the pullback metric  on $X$) and $\alpha$ (the $(2,0)$-part of the second fundamental form $II_g$) satisfy:
\begin{equation}\label{vincolo1}
g=e^u g_X\quad \text{ and } \, \a=e^u*_E(\b_0+\bar\p \eta), \text{ with suitable }\eta \in A^0(E),
\end{equation}
then the pair $(u,\a)$ is a solution of the Gauss Codazzi equations \eqref{0.8a} \eqref{0.6a} subject to the constraint:
$*_E^{-1}(e^{-u}\a) \in [\b] \,\, \iff \,\,(u,\eta)$ satisfies:
\begin{equation}\label{system_of_equations_intro}
\left\{
\begin{matrix*}[l]
\Delta_{X} u +2 -2te^{u} -8e^{u}\Vert \beta_{0}+\overline{\partial}\eta \Vert^{2} =0  &  \;\text{ in }\;  &  X,  \\
\overline{\partial}(e^{u}*_{E}(\beta_{0}+\overline{\partial}\eta))=0, &  \;\text{}\;  &   \\ 
\end{matrix*}
\right.
\end{equation} with $t=1-c^2$.\\
We shall refer to \eqref{system_of_equations_intro} as the ``constrained" Gauss-Codazzi equations by the pair $(X,[\beta])$.

In particular from \eqref{system_of_equations_intro},  we see that the Beltrami differential 
$\beta_{0}+\bar{\partial}\eta \in [\beta]$ is harmonic with respect to the metric 
$h=e^{\frac{u}2}g_{X}.$ With this point of view, it follows that the system \eqref{system_of_equations_intro}
can be formulated in terms of  Hitchin's self-duality equations (see \cite{Hitchin})
with respect to a suitable nilpotent $SL(2,\mathbb{C})$ Higgs bundle, we refer to \cite{Alessandrini_Li_Sanders}, \cite{Huang_Lucia_Tarantello_2} for details, see also \cite{Li} for related issue concerning  minimal immersions. 

As a consequence of Hitchin's selfduality theory \cite{Hitchin}, we would obtain readily existence and uniqueness for \eqref{system_of_equations_intro} provided the given Higgs bundle is stable, a property which is hard to check in our context.  

On the other hand, it is easy to check that (weak) solutions of \eqref{system_of_equations_intro} correspond to critical points of the following \underline{Donaldson functional} (in the terminology of \cite{Goncalves_Uhlenbeck})
\begin{equation}\label{F_t2}
F_{t}(u,\eta)
= 
\int_{X}
\left(\frac{\vert \nabla_{X} u \vert^{2}}{4}
-
u
+
te^{u}
+
4e^{ u}\Vert \beta_{0} + \overline{\partial} \eta \Vert^{2}
\right)
\,dA
, 
\end{equation}
$t\in \R$, with ``natural" (convex) domain:
$$
\begin{array}{l}
\Lambda
=
\left\{  
(u,\eta)\in H^{1}(X) \times W^{1,2}(X,E)
\; : \; 
\int_{X}e^{u}
\Vert \beta_{0} + \bar{\partial}\eta \Vert^{2}\,dA < \infty
\right\} 
\end{array}
$$
where $H^{1}(X)$ is the usual Sobolev spaces of function defined on $X$
and $W^{1,2}(X,E)$ is the Sobolev space of sections of $E$ (see \eqref{W_1_p}).  

We refer to \cite{Tar_2} for a detailed discussion about the Gateaux differentiability  of $F_t$ along "smooth" directions and the corresponding notion of "weak" critical point and related regularity.

For $t>0$ the functional $F_{t}$ is clearly bounded from below in $\Lambda,$ 
and as anticipated in \cite{Goncalves_Uhlenbeck}, we know from \cite{Huang_Lucia_Tarantello_2} that  if $t>0$ then  $F_{t}$ admits a unique (smooth) critical point $(u_t, \eta_t)$ corresponding to its global minimum in $\Lambda.$ 
Theorem \ref{thm_A} in the Introduction is a direct consequence of this fact. 
On the other hand, for $t \leq 0$ it can happen that the functional $F_t$ admits no critical points in $\Lambda,$ i.e. \eqref{system_of_equations_intro} admits no solutions. Indeed, this is the case if we take $[\beta]=0$ (i.e. $\beta_0=0$) 
where the second equation in \eqref{system_of_equations_intro} implies that necessarily: $\bar{\partial}\eta =0$ (or equivalently: $\eta=0,$ see \eqref{poincare} and \cite{Huang_Lucia_Tarantello_2} ) and as a consequence the first equation \eqref{system_of_equations_intro} cannot admit a solutions for $t \leq 0.$

Thus when $t \leq 0,$ we need to identify the pairs $(X,[\beta])$ which insure the existence of critical points for $F_{t}.$ This is a delicate task even for $t=0.$
Indeed, from \cite{Tar_2} we know about the continuous dependence of the pair $(u_t,\eta_t)$ with respect to the parameter $t\in(0,+\infty)$, and letting:
$$
F_{0}(u,\eta)
=
\int_{X}
\left(
\frac{1}{4}\vert \nabla_{X} u \vert^{2}
-
u
+
4e^{u}\Vert \beta_{0} + \overline{\partial} \eta \Vert^{2}
\right)
\,dA,
$$
we know that the existence and uniqueness of a (smooth) critical point for $F_0$ is actually equivalent to the continuous extension of $(u_t,\eta_t)$ at $t=0.$ More precisely, the following holds (see Theorem 8 in \cite{Tar_2}):
\begin{thmx}[Theorem 8 \cite{Tar_2}]\label{thmprimobis}
If $(u_{0},\eta_{0})$ is a solution for the system
\eqref{system_of_equations_intro} with $t=0$, then
\begin{enumerate}[label=(\roman*)]
\item $(u_{t},\eta_{t})\rightarrow (u_{0},\eta_{0})$ uniformly in $C^{\infty}(X), $
as $t\rightarrow 0^{+}$;
\item
$F_{0}$ is bounded from below in $\Lambda$ and attains its global minimum at $(u_{0},\eta_{0})$ which defines its only critical point.\\ 
Hence,
$(u_{0},\eta_{0})$ is the \underline{only} solution of \eqref{system_of_equations_intro} with $t=0.$

\end{enumerate}
\end{thmx}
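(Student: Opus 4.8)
The plan is to reduce the statement to the \emph{compactness} alternative of the blow-up dichotomy recalled in the Introduction. Once I show that the hypothesis --- existence of a solution $(u_0,\eta_0)$ of \eqref{system_of_equations_intro} at $t=0$ --- forces $\limsup_{t\to0^+}\max_X\xi_t<+\infty$, alternative (1) already packages the convergence $(u_t,\eta_t)\to(u_0,\eta_0)$ together with the fact that $(u_0,\eta_0)$ is the \emph{unique} critical point of $F_0$ realizing its global minimum. Thus the whole problem concentrates on ruling out blow-up; the remaining implications (bootstrap to $C^\infty$ and boundedness below of $F_0$) are then routine, and in fact can be reproved directly.

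First I would record the two a priori facts that drive everything. Using the minimality of $(u_t,\eta_t)$ for $F_t$ and the elementary splitting $F_t=F_0+t\int_X e^{u}\,dA$, the competitor $(u_0,\eta_0)$ yields
\[
F_0(u_t,\eta_t)\le F_t(u_t,\eta_t)\le F_t(u_0,\eta_0)=F_0(u_0,\eta_0)+t\!\int_X e^{u_0}\,dA,
\]
so that $\limsup_{t\to0^+}F_0(u_t,\eta_t)\le F_0(u_0,\eta_0)$, a uniform upper bound on the energy since $\int_X e^{u_0}\,dA<+\infty$. Next, integrating the first equation in \eqref{system_of_equations_intro} over $X$, using $\int_X\Delta_X u_t\,dA=0$ and Gauss--Bonnet ($\mathrm{Area}(X)=4\pi(\mathfrak{g}-1)$), gives the conservation law
\[
8\pi(\mathfrak{g}-1)=2t\!\int_X e^{u_t}\,dA+8\!\int_X e^{u_t}\|\beta_0+\bar\partial\eta_t\|^2\,dA,
\]
whence $t\int_X e^{u_t}\,dA\le 4\pi(\mathfrak{g}-1)$ and the total mass $\int_X e^{u_t}\|\beta_0+\bar\partial\eta_t\|^2\,dA\le\pi(\mathfrak{g}-1)$ is uniformly bounded; the same identity at $t=0$ shows that the genuine competitor $(u_0,\eta_0)$ already carries the \emph{full} mass $\pi(\mathfrak{g}-1)$ with no concentration.

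The heart of the matter --- and the step I expect to be the main obstacle --- is to show that these bounds are incompatible with the blow-up alternative (2). If blow-up occurred, the quantization \eqref{blowup mass}--\eqref{total mass} would force a \emph{definite} amount of mass, $8\pi\sum_{x\in\mathcal{S}}m_x\ge 8\pi$, to concentrate at the finitely many points of $\mathcal{S}$ and hence to be lost from the limiting configuration. I would argue, via a Pohozaev/Brezis--Merle-type analysis of \eqref{system_of_equations_intro} along a sequence $t_k\to0^+$, that such mass loss is irreconcilable with the uniform upper bound $F_0(u_{t_k},\eta_{t_k})\le F_0(u_0,\eta_0)+o(1)$ furnished by the full-mass competitor $(u_0,\eta_0)$. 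This incompatibility is the delicate point, since for low genus the total available mass is exactly of the order of the quantized blow-up mass, so the estimate must be quantitatively sharp; it forces alternative (1), i.e.\ $\limsup_{t\to0^+}\max_X\xi_t<+\infty$.

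Once compactness is secured I would close as follows. Elliptic estimates on \eqref{system_of_equations_intro}, together with the Poincar\'e inequality \eqref{poincare} controlling $\eta_t$ through $\bar\partial\eta_t$, upgrade the uniform bounds to $C^\infty$ bounds; extracting limits and invoking the continuous dependence on $t$ gives $(u_t,\eta_t)\to(u_0,\eta_0)$ in $C^\infty(X)$, which is (i), and uniqueness of the critical point comes packaged in alternative (1). For (ii) I would also give the direct verification: fix any $(v,\zeta)\in\Lambda$; since $v\in H^1(X)$ forces $e^{v}\in L^1(X)$ by the Moser--Trudinger inequality, one has $F_t(v,\zeta)\to F_0(v,\zeta)$, while minimality and the convergence just proved give $F_t(v,\zeta)\ge F_t(u_t,\eta_t)\to F_0(u_0,\eta_0)$. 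Letting $t\to0^+$ yields $F_0(v,\zeta)\ge F_0(u_0,\eta_0)$, so $F_0$ is bounded below in $\Lambda$ and attains its global minimum at $(u_0,\eta_0)$, which is therefore its only critical point and the only solution of \eqref{system_of_equations_intro} at $t=0$.
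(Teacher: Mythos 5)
You should first note that the paper never proves Theorem \ref{thmprimobis}: it is imported verbatim as Theorem 8 of \cite{Tar_2}, so your argument has to stand on its own — and it does not, because the step you yourself flag as ``the heart of the matter'' is not merely left unproven, it rests on an incompatibility that does not exist. You propose to rule out blow-up from two ingredients: the uniform upper bound $F_0(u_{t_k},\eta_{t_k})\le F_0(u_0,\eta_0)+o(1)$ and the mass quantization \eqref{blowup mass}--\eqref{total mass}. But these two facts are mutually consistent, as the paper itself shows: for $\mathfrak{g}=2$ the functional $F_0$ is \emph{always} bounded from below (see the discussion after Theorem \ref{thmD}), so along any sequence $t_k\to 0^+$ one has the two-sided bound $\inf_{\Lambda}F_0\le F_0(u_{t_k},\eta_{t_k})\le F_{t_k}(u_{t_k},\eta_{t_k})\le F_{1}(u_{1},\eta_{1})$, and yet blow-up \emph{does} occur whenever $[\beta]_{\mathbb{P}}=\tau(q)$ with $q$ a Weierstrass point (Theorems \ref{thmB}, \ref{thmC}); moreover in that case $m_{x_0}=1=\mathfrak{g}-1$, so the \emph{full} mass $4\pi(\mathfrak{g}-1)$ concentrates at $x_0$ and there is no mass defect at all. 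Thus bounded energy, quantized concentration and zero mass loss coexist, and no Pohozaev/Brezis--Merle argument can wring a contradiction out of them. What blow-up genuinely forces is $d_{t_k}=\fint_X u_{t_k}\,dA\to+\infty$ (this is recorded in the proof of Lemma \ref{lemmaexpand}), and your energy upper bound does not obstruct that, since $F_{t_k}(u_{t_k},\eta_{t_k})=\frac14\int_X\vert\nabla w_{t_k}\vert^2dA-4\pi(\mathfrak{g}-1)d_{t_k}+O(1)$ permits $d_{t_k}\to+\infty$ in tandem with a growing Dirichlet term.

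The existence of $(u_0,\eta_0)$ must therefore be exploited through something stronger than the finiteness of $F_0(u_0,\eta_0)$, and the logical order of your (i) and (ii) has to be reversed. The route that works (and is the one behind Theorem 8 of \cite{Tar_2}) proves (ii) \emph{first}, with no compactness input: the Donaldson functional is convex along suitable paths in the convex domain $\Lambda$ — the same structural fact that gives uniqueness of $(u_t,\eta_t)$ for $t>0$ in \cite{Huang_Lucia_Tarantello_2} — so a critical point of $F_0$ is automatically its global minimum and its \emph{only} critical point. Global minimality is the decisive gain: combining $F_0(u_t,\eta_t)\ge F_0(u_0,\eta_0)$ with your inequality $F_t(u_t,\eta_t)\le F_0(u_0,\eta_0)+t\int_X e^{u_0}dA$ yields $t\int_X e^{u_t}dA\le t\int_X e^{u_0}dA$, i.e.\ the uniform bound $\int_X e^{u_t}dA\le\int_X e^{u_0}dA$, whence $d_t\le \log\fint_X e^{u_t}dA\le C$ by Jensen's inequality; this contradicts $d_{t_k}\to+\infty$, so blow-up is excluded, and elliptic estimates together with the already-established uniqueness give (i). Two further defects in your write-up disappear once this order is adopted: quoting ``alternative (1)'' for the convergence and uniqueness is circular, since that packaging in the Introduction is derived ``together with \cite{Tar_2}'', i.e.\ it already contains the theorem you are proving; and in your direct verification of (ii), global minimality of $(u_0,\eta_0)$ does not by itself imply it is the only critical point — that, again, requires the convexity argument.
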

 
Just to clarify the above result note that, 
for $[\beta]=0$ and $t>0$ we have: $u_{t}=\ln \frac{1}{t}\rightarrow +\infty$, $\eta_{t}=0$ and
$F_{t}(u_{t},\eta_{t})\rightarrow -\infty$, 
as $t\rightarrow 0^{+},$ and indeed for $t=0$ the system \eqref{system_of_equations_intro} admits no solutions, consistently with Theorem \ref{thmprimobis}.
\\   

Therefore, to identify possible critical points for $F_{0}$, 
we must investigate  when the pair $(u_{t},\eta_{t})$ 
survives the passage to the limit, as $t\to 0^{+}$.  

For this purpose, we recall from \cite{Tar_2} that the map:
\begin{eqnarray*}
    t\to 4\int_X e^{u_t} \|\b_0+\bar \p \eta_t\|^2 dA=4\pi(\mathfrak{g}-1)-t\int_X e^{u_t} dA
\end{eqnarray*} is decreasing in $(0,+\infty)$ (see Lemma 3.6 of \cite{Tar_2}), and so it is well defined the value:
\begin{eqnarray}\label{rho}
    \rho([\b])=\rho([\b_0]):=4 \lim_{t\to0^+} \int_X e^{u_t} \|\b_0+\bar\p \eta_t\|^2 dA =4\lim_{t\to0^+} \int_X e^{\xi_t} \|\wh\a_t\|^2 dA,
\end{eqnarray}and,
\begin{eqnarray}\label{3.1*}
\rho([\b])\in [0, 4\pi(\mathfrak{g}-1)] \text{ and } \rho([\b])=0 \Longleftrightarrow [\b]=0.
\end{eqnarray}
Furthermore, in case $F_0$ is bounded from below then it was shown in \cite{Tar_2} that necessarily: 
$\rho ([\b])= 4\pi(\mathfrak{g}-1) \;
 \text{ i.e. } \, \lim_{t\to0^+} t\int_X e^{u_t}=0.$
\\

For given $[\beta] \in \mathcal{H}^{0,1}(X,E) \setminus \{ 0 \} $
with harmonic representative $\beta_{0} \in [\beta]\neq 0,$ and $t > 0,$ we set:
$$
\begin{array}{l}
\beta_{t}=\beta_{0}+\overline{\partial}\eta_{t} \in A^{0,1}(X,E)
\; \text{ and } \; 
\alpha_{t}=e^{ u_{t}}*_{E}\beta_{t} \in C_{2}(X) \setminus \{ 0 \} \; \\ \\
D_t=\text{div}(\a_t) \quad\text{ and } \; supp\, D_{t}=: Z_{t}.
\end{array}
$$  
Namely, $D_t$ is the zero divisor of $\a_t$ and its support $Z_{t}$ is the finite set of \underline{distinct} zeroes of $\alpha_{t}$, 
whose multiplicities adds up to $4(\mathfrak{g}-1)$ (see \eqref{2.03}). 
In terms of the fiberwise norm for $\alpha_{t}$ we have:
$\Vert \alpha_{t} \Vert(q)=\Vert \alpha_{t} \Vert_{E^{*}}(q)>0,
\; \forall \; q\in X\setminus Z_{t}.$ 

Moreover we let,
$$
s_{t}\in \R \; : \; 
e^{ s_{t}}
=
\Vert \alpha_{t} \Vert_{L^{2}}^{2}
\; \text{ and } \; 
\hat{\alpha}_t
=
\frac{\alpha_{t}}{\Vert \alpha_{t} \Vert_{L^{2}}}
=
e^{-\frac{s_{t}}{2}}\alpha_{t},
$$
where $\Vert \alpha_t \Vert_{L^{2}}$ is the $L^{2}$-norm of $\alpha_t\in C_{2}(X)$ (see \eqref{norm}) and we have: $\text{div}(\widehat\a_t)=\text{div}(\a_t)=D_t$.

In order to control
the asymptotic behavior of $(u_{t},\eta_{t})$, 
as $t\rightarrow 0^{+}$, 
we shall need to account for possible blow-up phenomena (cf. \cite{Brezis_Merle}) of the function,
\begin{equation}\label{u_to_xi}
\xi_{t}:=-u_{t}+s_{t},
\end{equation} satisfying the Liouville-type equation:
\begin{eqnarray}\label{1.19a}
-\Delta_{X} \xi_t=8\|\widehat\a_t\|^2 e^{\xi_t} -f_t \text{ in } X,
\end{eqnarray}with $f_t= 2(1-t e^{u_t})$ satisfying: $0\leq f_t \leq 2$ in $X$.
\vskip0.5cm

As already mentioned in the Introduction, by combining Theorem 3 of \cite{Tar_1} and Theorem \ref{thmprimobis} above, we know that either "compactness" or "blow-up" holds for $\xi_t$ along any sequence $t_k\to 0^+. $ This fact will be described in details in Theorem \ref{thm_blow_up_global_from_part_1} below, and for this purpose we let :

\begin{equation*}\label{6.11_prime}
u_{t}
=
w_{t} 
+
d_{t}
,
\; \text{ with } \; 
\int_{X} w_{t}dA=0
\; \text{ and } \;  
d_{t}=\fint_{X}u_{t}dA.
\end{equation*}

The following easy bounds where derived in \cite{Tar_2}:
\begin{lemma} For any $t>0$ the following holds:
\begin{equation}\label{property_v} \begin{array}{l}
\forall \, q  \in [ 1,2 )  \; \exists \; 
C_{q}>0 \; : \; \Vert w_{t} \Vert_{W^{1,q}(X)}\leq C_{q} \quad \text{ and } \, \; te^{d_t}\leq 1,\\
\\
w_{t}\leq C \; \text{ in  } \, X, \quad s_t \leq d_t + C \,\,  \mbox{and}\,\, \int_{X}e^{- u_{t}}dA \geq
C \fint_{X} \Vert \beta_{0} \Vert^{2}dA,\\ \\
\mbox{for a suitable constant $C>0.$}
\end{array} \end{equation}

\end{lemma}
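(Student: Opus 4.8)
The plan is to extract every estimate from a single integral identity obtained by integrating the first equation of \eqref{system_of_equations_intro} over $X$. Since $\int_X \Delta_X u_t\,dA = 0$ on the closed surface and $\int_X dA = 4\pi(\mathfrak{g}-1)$ by Gauss--Bonnet, one gets
\begin{equation*}
8\int_X e^{u_t}\|\beta_0+\bar\partial\eta_t\|^2\,dA = 8\pi(\mathfrak{g}-1) - 2t\int_X e^{u_t}\,dA ,
\end{equation*}
whence the two uniform bounds $t\int_X e^{u_t}\,dA \le 4\pi(\mathfrak{g}-1)$ and $\int_X e^{u_t}\|\beta_t\|^2\,dA \le \pi(\mathfrak{g}-1)$, where $\beta_t := \beta_0 + \bar\partial\eta_t$. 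The estimate $te^{d_t}\le 1$ is then immediate from Jensen's inequality applied to the convex exponential: since $d_t = \fint_X u_t\,dA$ we have $e^{d_t}\le \fint_X e^{u_t}\,dA = (4\pi(\mathfrak{g}-1))^{-1}\int_X e^{u_t}\,dA$, and multiplying by $t$ and using the first bound gives $te^{d_t}\le 1$.

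For the remaining estimates I would work with $w_t = u_t - d_t$, the mean-zero solution of $-\Delta_X w_t = h_t$, where $h_t := 2 - g_t$ and $g_t := 2te^{u_t} + 8e^{u_t}\|\beta_t\|^2 \ge 0$ satisfies $\int_X g_t\,dA = 2\int_X dA$ by the identity above (so $\int_X h_t\,dA = 0$, as required for solvability). Each of the three terms of $|h_t|$ is controlled in $L^1$ by the bounds of the first step, so $\|h_t\|_{L^1}$ is uniformly bounded; the bound $\|w_t\|_{W^{1,q}}\le C_q\|h_t\|_{L^1}$ for $q\in[1,2)$ then follows from the standard potential estimate on a closed surface, the restriction $q<2$ being forced because the gradient of the Green's kernel is only in weak-$L^2$ in dimension two. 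For the pointwise bound $w_t\le C$ I would use the Green's representation $w_t(x) = \int_X G(x,y)h_t(y)\,dA(y)$ with $G$ normalized by $\int_X G(x,\cdot)\,dA = 0$; then the constant $2$ in $h_t$ integrates away and leaves $w_t(x) = -\int_X G(x,y)g_t(y)\,dA(y)$. Since $G$ is bounded below (it carries a $+\infty$ logarithmic singularity on the diagonal) and $g_t\ge 0$, one obtains $w_t(x)\le C_0\int_X g_t\,dA = C$ uniformly in $x$ and $t$.

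With $w_t\le C$ in hand, the bound $s_t\le d_t+C$ follows by writing $e^{s_t} = \|\alpha_t\|_{L^2}^2 = \int_X e^{2u_t}\|\beta_t\|^2\,dA = e^{2d_t}\int_X e^{2w_t}\|\beta_t\|^2\,dA$ (using that $*_E$ is an isometry). Factoring one copy of $e^{w_t}\le e^{C}$ out of $e^{2w_t}$ and recalling that $\int_X e^{u_t}\|\beta_t\|^2\,dA = e^{d_t}\int_X e^{w_t}\|\beta_t\|^2\,dA \le \pi(\mathfrak{g}-1)$ yields $e^{s_t}\le e^{C}\pi(\mathfrak{g}-1)\,e^{d_t}$, i.e.\ $s_t\le d_t+C'$. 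Finally, for the lower bound on $\int_X e^{-u_t}\,dA$ I would pair the fixed harmonic $\beta_0$ against $\beta_t$: orthogonality of the Dolbeault decomposition gives $\int_X\|\beta_0\|^2\,dA = \int_X\langle\beta_0,\beta_t\rangle\,dA$, and Cauchy--Schwarz with the splitting $\|\beta_0\|\,\|\beta_t\| = (\|\beta_0\|e^{-u_t/2})(\|\beta_t\|e^{u_t/2})$ gives $\int_X\|\beta_0\|^2\,dA \le \big(\int_X e^{-u_t}\|\beta_0\|^2\big)^{1/2}\big(\int_X e^{u_t}\|\beta_t\|^2\big)^{1/2}$. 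Bounding the last factor by $\pi(\mathfrak{g}-1)$ and $\|\beta_0\|^2\le \|\beta_0\|_{L^\infty}^2$ in the first produces a positive, $t$-independent lower bound of the asserted form $\int_X e^{-u_t}\,dA \ge C\fint_X\|\beta_0\|^2\,dA$, with $C$ depending only on $X$ and $\beta_0$.

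I expect the main obstacle to be the pointwise upper bound $w_t\le C$. The other four estimates are essentially consequences of the single integrated identity combined with Jensen and Cauchy--Schwarz, whereas $w_t\le C$ genuinely uses the sign structure of the nonlinearity (the ``good'' terms collected in $g_t\ge 0$ can only push $w_t$ downward) together with the lower boundedness of the Green's function. Since this bound is also the crucial input for $s_t\le d_t+C$, it is the technical heart of the lemma.
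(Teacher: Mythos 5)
Your proof is correct, and every constant it produces is uniform in $t$, which is the point of the lemma: the integrated Gauss equation yields $t\int_X e^{u_t}dA\le 4\pi(\mathfrak{g}-1)$ and $\int_X e^{u_t}\|\beta_0+\bar\partial\eta_t\|^2dA\le\pi(\mathfrak{g}-1)$; Jensen gives $te^{d_t}\le 1$; the uniform $L^1$ bound on $h_t$ plus the standard potential estimate (with the correct explanation of why $q<2$) gives the $W^{1,q}$ bound; the sign of $g_t$ together with the lower bound on the mean-zero-normalized Green's function gives $w_t\le C$; and the last two estimates follow as you write, the bound $s_t\le d_t+C$ correctly feeding on $w_t\le C$, and the final one on Hodge orthogonality of $\beta_0$ against $\bar\partial\eta_t$ plus Cauchy--Schwarz. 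Note that the paper itself offers no proof to compare against: it simply cites Lemma 3.7 and Remark 3.1 of \cite{Tar_2}, so your self-contained argument is a reconstruction of the standard derivation rather than a different route.

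One refinement is worth recording. As written, your constant in the last estimate is $\bigl(\int_X\|\beta_0\|^2dA\bigr)^2\big/\bigl(\pi(\mathfrak{g}-1)\|\beta_0\|_{L^\infty}^2\bigr)$ divided by $\fint_X\|\beta_0\|^2dA$, hence it depends on $\beta_0$; the natural reading of $\int_X e^{-u_t}dA\ge C\fint_X\|\beta_0\|^2dA$ is that $C$ depends only on $X$. This is easily repaired: harmonic Beltrami differentials form a finite-dimensional space (isomorphic to $\mathcal{H}^{0,1}(X,E)$), so all norms on it are equivalent, and inserting $\|\beta_0\|_{L^\infty}\le K\|\beta_0\|_{L^2}$ with $K=K(X)$ into your inequality gives $\int_X e^{-u_t}dA\ge \frac{4}{K^2}\fint_X\|\beta_0\|^2dA$, i.e.\ exactly the stated form with a constant depending only on $X$. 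Since in the paper the pair $(X,[\beta])$ is fixed throughout the limit $t\to 0^+$, your weaker form already suffices for all the uses made of the lemma, so this is cosmetic rather than a gap.
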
	
\begin{proof}
See Lemma 3.7 and Remark 3.1 of \cite{Tar_2}.    
\end{proof}

In view of the estimates in \eqref{property_v}, along a (positive) sequence
$t_{k}\longrightarrow 0^{+}$,  
for $$d_{k}:=d_{t_{k}}, \,\; u_{k}=u_{t_{k}}, \,\; w_k:=w_{t_k},$$  
we may assume that,
\begin{equation*}
w_{k} \longrightarrow w_{0}
\; \text{ and } \; 
e^{w_{k}}\longrightarrow e^{w_{0}} 
\; \text{ pointwise and in   } \; 
L^{p }(X),\; 
\end{equation*}
\begin{equation}\label{6.20_prime}
t_{k}e^{d_{k}} \longrightarrow \mu \geq 0
\; \text{ and so } \;
t_{k}e^{u_{k}}
\longrightarrow 
\mu e^{w_{0}}
\; \text{ pointwise and in  } \; 
L^{p}(X), 
\end{equation}
for any $p>1,$ and as $k\longrightarrow +\infty$.

In addition, it follows from  
\eqref{2.03}
that, for  $k$ sufficiently large and possibly along a subsequence,
we can find a suitable integer $N \in \{1, ..., 4(\mathfrak{g}-1) \}$ such that, for $\wh\a_k:=\wh \alpha_{t_{k}}\in C_{2}(X) \setminus \{ 0 \}$ we have:
\begin{eqnarray}\label{zeroalfakappa}
    \text{div}(\wh \a_k)=\sum_{j=1}^N n_j q_{j,k}\;\, \text{and} \,\, \sum_{j=1}^{N} n_{j}=4(\mathfrak{g}-1),
\end{eqnarray}
where $q_{j,k}$ is a zero of $\wh \a_k$ with multiplicity $n_{j}\in \mathbb{N}, \quad j\in \{1,\ldots,N \}$. 

Moreover, up to subsequences,  $\text{ as } k\to +\infty,$ we may let,
$$
\wh\a_k\to \wh\a_0, \;\; q_{j,k}\longrightarrow q_{j},
\; \text{ with } \; \wh\a_0(q_{j})=0, \;\, j \in \{ 1,\ldots,N \}.
$$
Although the zeroes of $\wh\a_0$ in $\{q_1,\ldots, q_N\}$ may \underline{not} be distinct, we know  however that the sum of the multiplicities carried by each  $q_{j}$'s for $j \in \{ 1,\ldots,N \},$ adds up to the value: $4(\mathfrak{g}-1)$, and
therefore $\wh\a_0$ cannot vanish anywhere else.
We set,
\begin{equation}\label{zetakappa}
Z^{(k)}:= supp (\text{div}(\wh \a_k)) =\{ q_{1,k},\ldots,q_{N,k} \} \quad \mbox{and }\,\,Z^{(0)}:= supp (\text{div}(\wh \a_0)), 
\end{equation}
so that, $Z^{(0)}$ collects the distinct zeroes in  $\{q_1,\ldots, q_N\}$ of $\wh\alpha_0.$ 
In other words, we have:
$$\text{div}(\wh \a_0)=\sum_{q \in Z^{(0)}}n_{q}q \quad \mbox{and }\, \sum_{q \in Z^{(0)}}n_{q} = 4(\mathfrak{g}-1).  $$
Moreover, by setting:
$$I_q=\{j \in \{ 1,\ldots,N \}: q_j=q \}, \quad \mbox{for}\,\, q \in Z^{(0)}, $$
we can identify the set $Z_0$ of elements in $Z^{(0)}$ (possibly empty) corresponding to the limit points of distinct zeroes in $Z^{(k)},$ as given by:
\begin{eqnarray}\label{Z_0}
    Z_0:=\{q\in Z^{(0)}: |I_q|\geq2 \}, \quad \mbox{($|I_q|$=cardinality of $I_q$)}
\end{eqnarray}
and we shall refer to the elements in  $Z_0$ as the zeroes of $\wh\alpha_0$ of \underline{"collapsing"} type.
\\
 
We define:
$$
\xi_{k}=- (u_{t_{k}}-s_{t_{k}})
$$
and let,
\begin{equation}\label{6.22}
R_{k} = 8 \Vert \wh{\alpha}_{k} \Vert^{2}
\end{equation}
so that $R_k$ and $|\nabla_{X} R_k|$ are uniformly bounded in $X$. Moreover, we have:
\begin{equation}\label{6.23}
-\Delta_{X} \xi_{k}
=
R_{k}e^{\xi_{k}}-f_{k}
\; \text{ in  } \; 
X \qquad \text{ and } \,\,
\int_{X}R_{k}e^{\xi_{k}}\, \leq C
\end{equation}
with 
$f_{k}
: =
2 (1-t_{k}e^{u_{t_{k}}})>0
$ satisfying: 
\begin{equation}\label{f_0}
\begin{array}{l}
f_{k}
\rightarrow
f_{0}
=:
2 (1-\mu e^{w_{0}})
\, \text{ in } \, 
L^{p}(X),\; p>1;
\\
\int_{X}f_{0}=2 \rho([\beta])>0 
\; \text{ for } \; [\beta]\neq 0, \, \text{ (recall \eqref{3.1*}).}
\end{array} 
\end{equation} 
 
Also notice that,
\begin{equation}\label{form_of_R_k}
R_{k}(z)
=
8 
\prod_{j=1}^{N}(d_{g_{X}}(z,q_{j,k}))^{2 n_{j}}G_{k}(z)
,\;
z \in X,
 \end{equation}
where $d_{g_{X}}$ defines the distance relative to the metric $g_X$. From (\ref{6.22}) we have:
$$G_{k}\in C^{1}(X) \;\; 0 < a \leq G_{k} \leq b	 \mbox{ and  \ }  \vert \nabla_{X} G_{k} \vert \leq A \mbox{ \  in   }  X, $$
 with suitable positive constants $a,b$ and $A$.
Hence (by taking a subsequence if necessary) we may assume that, 
\begin{equation}\label{6.27}
G_{k}\rightarrow G_{0}
\; \text{ in   } \; 
C^{0}(X) 
\; \text{ and so } \; 
R_{k}\rightarrow R_{0}
\; \text{ in } \; 
C^{0}(X),
\; \text{ as } \; 
k\rightarrow +\infty,
\end{equation}
with
\begin{equation}\label{3.57a}
R_{0}(z)
=
8 
\prod_{q\in Z^{(0)}}  (d_{g_{X}}(z,q))^{2n_{q}}G_{0}(z)
=
8 \Vert \hat{\alpha}_0 \Vert^{2}.
\end{equation}
 
With the information above, we can apply  Theorem 3 of \cite{Tar_1}, which extends to the case of blow-up point at a zero point of "collapsing" type, the analysis of \cite{Brezis_Merle}, \cite{Li_Harnack}, 
 \cite{Bartolucci_Tarantello},  \cite{Lin_Tarantello} and 
\cite{Lee_Lin_Tarantello_Yang},
to deduce the following alternatives
about the asymptotic behavior of $\xi_{k}$:
\begin{thmx}[Theorem 3 \cite{Tar_1}]\label{thm_blow_up_global_from_part_1} 
Let $\xi_{k}$ satisfy \eqref{6.23} and  assume
\eqref{f_0}-\eqref{3.57a}. Then
one of the following alternatives holds (along a subsequence):

\

\noindent
(i) \quad (compactness)\; : \; 
$\xi_{k}\longrightarrow \xi_{0}$ in $C^{2}(X)$ with
\begin{equation}\label{thm_referenced_compactness}
-\Delta_{X} \xi_{0}
=
R_{0}e^{\xi_{0}}-f_{0}
,\;
\; \text{ in   } \; 
X
\end{equation}

\noindent
(ii)  \quad (blow-up)\; : \;There exists a \underline{finite} blow-up set 
$$
\mathcal{S}
=
\{ x\in X
\; : \; 
\; \exists \; 
x_{k}\rightarrow x
\; \text{ and } \; 
\xi_{k}(x_{k})
\rightarrow 
+ \infty,
\; \text{ as } \; 
k\rightarrow +\infty
 \} 
$$

\quad \,
such that, 
$\xi_{k}$ 
is uniformly bounded from above on compact sets of $X\setminus \mathcal{S}$

\quad \;
and,
as $k\rightarrow +\infty.$ Furthermore,
\begin{enumerate}[label=(\roman*)]
\item[a)] either (blow-up with concentration)\;:\;
\end{enumerate} 
$$
\begin{array}{l}
\xi_{k}\longrightarrow -\infty
\; \text{ uniformly on compact sets of  } \;
X\setminus \mathcal{S},
\\ \\
R_{k}e^{\xi_{k}}
\rightharpoonup
\sum_{x\in \mathcal{S}}\sigma(x)\delta_{x}
\; \text{ weakly in the sense of measures, } \end{array} $$ \\
where
\begin{equation}\label{sigma_q}
\begin{array}{l} 
\sigma(x)
:=
\lim_{r \to 0^{+} }
\left(
\lim_{k \to +\infty }8 \int_{B(x;r)} \|\widehat\a_{t_{k}}\|^2 e^{\xi_k}dA\right)\in 8\pi\mathbb{N},\\ \\
x\not \in Z^{(0)}\,\implies \sigma(x)=8\pi \,\mbox{and } 
\,
x=  z_{j} \in Z^{(0)}\setminus Z_{0}\,\implies \sigma(x) = 8\pi(1+n_{j}). 
\end{array}
\end{equation}
Such an alternative always holds when $\mathcal{S} \setminus Z_{0} \neq \emptyset$.
\item[b)] or (blow-up without concentration)\;:\; 
\begin{align}
&
\xi_{k}\rightarrow \xi_{0}
\; \text{ in } \; C^{2}_{loc}(X\setminus \mathcal{S}),
\label{3.58c}
\\
&
R_{k}e^{\xi_{k}}
\rightharpoonup
R_{0}e^{\xi_{0}}
+
\sum_{x\in \mathcal{S}}\sigma(x)\delta_{x}
\; \text{ weakly in the sense of measures, } \;
\notag
\\
&
\mbox{with }\sigma(x)\in 8\pi \mathbb{N}, \;\; \mathcal{S} \subset Z_{0} \,\mbox{and }\, \xi_{0}\,\mbox{satysfying:}
\notag
\end{align}	
\begin{equation*}
\quad\quad\quad
-\Delta_{X} \xi_{0}
=
R_{0}e^{\xi_{0}}
+
\sum_{x\in \mathcal{S}}\sigma(x)\delta_{x}-f_{0}
\; \text{ in  } \; 
X.
\end{equation*}

\end{thmx}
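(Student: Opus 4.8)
The plan is to carry out a Brezis--Merle type blow-up analysis for the sequence $\xi_k$ solving \eqref{6.23}, adapting the classical theory of \cite{Brezis_Merle}, \cite{Li_Harnack}, \cite{Li_Shafrir} to the essential new feature that the coefficient $R_k$ degenerates at the zeroes $q_{j,k}$ of $\widehat\alpha_k$ in \eqref{form_of_R_k}, which may in addition collapse onto the set $Z_0$ of zeroes of ``collapsing'' type. The starting point is the elementary dichotomy together with the finiteness of the blow-up set $\mathcal{S}$. Since $0\leq f_k\leq 2$ and $\int_X R_k e^{\xi_k}\,dA\leq C$, I would decompose $\xi_k=\bar\xi_k+(\xi_k-\bar\xi_k)$ and represent $\xi_k-\bar\xi_k$ through the Green's function of $(X,g_X)$, so that the Brezis--Merle estimate applies to the right-hand side $R_k e^{\xi_k}-f_k$, whose $L^1$-norm is controlled. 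This gives that, away from a finite set $\mathcal{S}$, the sequence $\xi_k$ is uniformly bounded from above on compact subsets of $X\setminus\mathcal{S}$; finiteness of $\mathcal{S}$ follows from the local mass lower bound $8\pi$ at each blow-up point, combined with the total mass identity obtained by integrating \eqref{6.23} over the closed surface $X$, namely $\int_X R_k e^{\xi_k}\,dA=\int_X f_k\,dA\to 2\rho([\beta])$ by \eqref{f_0}. This identity fixes the total limiting mass and is what ultimately balances the Dirac contributions.

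\emph{Separating the three alternatives.} If $\mathcal{S}=\emptyset$ then $\xi_k$ is locally bounded above, and coupling the $W^{2,p}$-estimates for \eqref{6.23} with a Harnack inequality for the subsolution relation $-\Delta_X\xi_k+f_k\geq 0$ produces a uniform lower bound and hence $C^2(X)$-convergence to a solution $\xi_0$ of \eqref{thm_referenced_compactness}, which is alternative (i). If $\mathcal{S}\neq\emptyset$ I examine $\xi_k$ on $X\setminus\mathcal{S}$: either $\xi_k\to-\infty$ locally uniformly (concentration, alternative a) or $\xi_k$ stays locally bounded and, by elliptic estimates, converges in $C^2_{loc}(X\setminus\mathcal{S})$ to a limit $\xi_0$ (non-concentration, alternative b). To decide which case occurs I use that a blow-up point lying outside $Z_0$ is a point of genuine concentration: the local bubble there carries a clean mass quantum and, via the Harnack inequality together with the fixed total mass $2\rho([\beta])$, forces the regular part of $\xi_k$ to descend to $-\infty$ throughout $X\setminus\mathcal{S}$. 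Hence $\mathcal{S}\setminus Z_0\neq\emptyset$ forces alternative a, and the non-concentration case b can occur only with $\mathcal{S}\subset Z_0$.

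\emph{Quantization of the blow-up masses.} At each blow-up point I rescale \eqref{6.23} and pass to a limiting entire Liouville problem. For a regular point $x\notin Z^{(0)}$ the rescaled equation converges to $-\Delta U=R_0(x)e^U$ on $\mathbb{R}^2$ with finite mass, so $\sigma(x)=8\pi$. For an isolated zero $z_j\in Z^{(0)}\setminus Z_0$ of order $n_j$ the rescaled coefficient behaves like $c\,|z|^{2n_j}$ and the limit solves the singular Liouville equation $-\Delta U=c\,|z|^{2n_j}e^U$, whose classification and quantization (\cite{Bartolucci_Tarantello}, \cite{Li_Shafrir}) yield $\sigma(x)=8\pi(1+n_j)$, matching \eqref{sigma_q}. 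A Pohozaev identity on shrinking geodesic balls confirms in every case that $\sigma(x)\in 8\pi\mathbb{N}$ and that no mass escapes through the necks.

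\emph{The main obstacle: collapsing zeroes.} The genuinely delicate analysis is at $q\in Z_0$, where several $q_{j,k}\to q$ at rates that must be compared with the concentration scale of $\xi_k$. Here a single rescaling need not converge to one Liouville bubble, and the ``blow-up without concentration'' phenomenon appears: the integral $\int_{B(q;r)}R_k e^{\xi_k}\,dA$ can concentrate a Dirac mass at $q$ even while $\xi_k$ stays bounded near $q$, the concentration being driven by the spreading zeroes of $R_k$ rather than by $\xi_k\to+\infty$. I would control this through a multi-scale (bubbling-tree) decomposition around $q$ and a Pohozaev balance on the annuli separating the collapsing zeroes, along the lines of \cite{Lin_Tarantello} and \cite{Lee_Lin_Tarantello_Yang}. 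This is the step that verifies $\sigma(q)\in 8\pi\mathbb{N}$ and that, in the non-concentration regime, the limit $\xi_0$ solves $-\Delta_X\xi_0=R_0e^{\xi_0}+\sum_{x\in\mathcal{S}}\sigma(x)\delta_x-f_0$ in $X$, which closes the proof. I expect the comparison of collapse and concentration scales, and the exclusion of mass leaking in the necks between collapsing zeroes, to be the hardest part of the argument.
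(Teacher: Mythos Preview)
The paper does not actually prove this theorem: it is quoted verbatim as Theorem~3 of \cite{Tar_1} and invoked as a black box, with only the sentence ``We point out that \eqref{sigma_q} is based on \cite{Chen_Li_1} and \cite{Prajapat_Tarantello}'' and a list of references (\cite{Tar_1}, \cite{Suzuki_Ohtusuka}, \cite{Lin_Tarantello}, \cite{Lee_Lin_Tarantello_Yang}, \cite{Lee_Lin_Wei_Yang}, \cite{Lee_Lin_Yang_Zhang}) for the collapsing case. So there is no in-paper proof to compare against.

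That said, your outline is faithful to the strategy indicated by those references: the Brezis--Merle dichotomy and finiteness of $\mathcal{S}$ via a local $8\pi$ mass lower bound, the classification of entire Liouville solutions (\cite{Chen_Li_1}, \cite{Prajapat_Tarantello}) giving $\sigma(x)=8\pi$ at regular points and $8\pi(1+n_j)$ at non-collapsing zeroes, the Harnack-type argument forcing concentration once $\mathcal{S}\setminus Z_0\neq\emptyset$, and the multi-scale/Pohozaev analysis of \cite{Lin_Tarantello}, \cite{Lee_Lin_Tarantello_Yang} at collapsing zeroes for the $8\pi\mathbb{N}$ quantization and the possibility of blow-up without concentration. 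One small inaccuracy: in alternative~b) the concentration at $q\in Z_0$ is \emph{not} ``driven by the spreading zeroes of $R_k$ rather than by $\xi_k\to+\infty$''; by definition of $\mathcal{S}$ one still has $\xi_k(x_k)\to+\infty$ along a sequence $x_k\to q$, and the point is rather that the bubble scale and the zero-collapse scale interact so that $\xi_k$ remains bounded on compact sets of a punctured neighbourhood while a Dirac mass persists. Otherwise your sketch matches the architecture of the proof in \cite{Tar_1}.
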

We point out that \eqref{sigma_q} is based on \cite{Chen_Li_1} and \cite{Prajapat_Tarantello}. While we refer the reader to \cite{Tar_1}, \cite{Suzuki_Ohtusuka}, \cite{Lin_Tarantello}, 
\cite{Lee_Lin_Tarantello_Yang}, \cite{Lee_Lin_Wei_Yang} and \cite{Lee_Lin_Yang_Zhang} for a more detailed discussion about blow up at a zero of "collapsing" type in connection with the phenomenon of "blow up without concentration". 
\begin{remark} \label{rho1}
\medskip
If alternative (i) holds then (by Theorem \ref{thmprimobis}) $F_{0}$ is bounded from below and
$(u_{t},\eta_{t})\rightarrow (u_{0},\eta_{0})$ in $\Lambda$
as $t\rightarrow 0^{+}$,
with $(u_{0},\eta_{0})$ the global minimum and only critical point of $F_{0}$ and  
$\rho([\beta])=4\pi(\mathfrak{g}-1),$ see \cite{Tar_2} for details. Hence in this case, the (CMC) $c$-immersions given by Theorem \ref{thm_A} pass to the limit as $c\to 1^-$ and yield to the desired (CMC) $1$-immersion.
\end{remark}
On the contrary, we observe the following:
\begin{remark} \label{conical}
\medskip
When alternative (ii)-b) holds then we may consider the family of (scaled) (CMC)-immersions of $X$ into hyperbolic 3-manifolds relative to the Cauchy data: $(u_t-s_t,\hat{\alpha}_t).$ Then, by taking into account \eqref{essek} below, along the  sequence $t=t_k\to 0^+$  as $ k \to +\infty,$ we obtain a "limiting" configuration (in the sense of Gromov-Hausdorff) given by a (CMC)-immersion of $X$ into a hyperbolic cone-manifold of dimension 3 (\cite{KS}). Roughly speaking, 3-dimensional hyperbolic cone-manifolds are characterized by the presence of conical singularities along lines. They were introduced by Krasnov-Schlenker in \cite{KS} to obtain a Hamiltonian description of 3D-gravity. 

 In particular in this case, the induced metric on $X$ admits fintely many conical singularities (at blow-up points corresponding to zeroes of "collapsing" type for $\hat{\alpha_0}$) with conical angles an integral multiple of $8\pi$ (and not the usual $4\pi$ due to our normalization of the conformal factor, see e.g. \cite{Mazzeo_Zhu_1, Mazzeo_Zhu_2}, \cite{Mondello_Panov_1, Mondello_Panov_2}). 
 
This situation is likely to captures the analogue in the compact setting of the "smooth ends" present in (CMC) 1-immersions into $\mathbb{H}^3$ as described by Bryant in \cite{Bryant}.

\end{remark}

Therefore, in the following, we shall investigate the sequence $\xi_k$ in case of blow-up (in the sense of alternative (ii) of Theorem \ref{thm_blow_up_global_from_part_1}) with the purpose to establish the orthogonality relation (\ref{ortogonale}) for the given class $[\b]\in \H^{0,1}(X,E)\setminus\{0\}$.  \\
Let,  
\begin{equation}\label{blowup-set}
\mathcal{S} \neq \emptyset\,\, \mbox{be the (finite) blow-up set of $\xi_k,$}
\end{equation}
so that,
\begin{equation}\label{blowupmass} 
m_{x}:= \frac{1}{8\pi}\sigma(x) \in 
\mathbb{N}\quad\mbox{(the \underline{blow-up mass} at}\,\, x \in \mathcal{S})
\end{equation} \\ satisfies: \begin{equation}\label{massatotale}
1\leq \sum_{x \in \mathcal{S}} m_{x}\leq \mathfrak{g}-1,     
\end{equation} \\
(recall \eqref{rho} and \eqref{3.1*}). 
\\
 As already observed in \cite{Tar_2}, and in view of \eqref{antiso},  we find:
\begin{lemma}\label{lemmaexpand} For any $r>0$ sufficiently small and for every $\alpha \in C_2(X)$ we have:
\begin{equation}\label{betawedge} 
\begin{array}{l}
\int_X \beta \wedge \alpha = \int_X \beta_0 \wedge \alpha = \\ e^{\frac{-s_k}{2}}\left( \sum_{x \in \mathcal{S}} \int_{B(x; r)} e^{\xi_k}<*^{-1} \hat{\alpha_k},*^{-1} \alpha> d A  \right) +o(1) \\ =  e^{\frac{-s_k}{2}}\left( \sum_{x \in \mathcal{S}} \int_{B(x; r)} e^{\xi_k}<\alpha , \hat{\alpha_k}> d A  \right) +o(1) \\ \mbox{as \, $k \to + \infty.$ } \end{array} \end{equation}
\eqref{antiso}.
\end{lemma}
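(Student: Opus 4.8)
The plan is to separate an exact algebraic identity, valid for every $k$ and over all of $X$, from the purely asymptotic localization near the blow-up set $\mathcal{S}$. First I would record the cohomological reduction: since $\alpha\in C_2(X)$ is holomorphic, $\bar{\partial}\alpha=0$, so Stokes' theorem gives $\int_X\bar{\partial}\eta\wedge\alpha=0$ for every smooth $\eta$; writing $\beta=\beta_0+\bar{\partial}\eta$ and $\beta_k:=\beta_0+\bar{\partial}\eta_{t_k}$ this yields
\[
\int_X\beta\wedge\alpha=\int_X\beta_0\wedge\alpha=\int_X\beta_k\wedge\alpha .
\]
Next I would express $\beta_k$ through $\widehat{\alpha}_k$: from $\alpha_{t_k}=e^{u_{t_k}}*_E\beta_k=e^{s_{t_k}/2}\,\widehat{\alpha}_k$ and $\xi_k=-u_{t_k}+s_{t_k}$ one gets $\beta_k=e^{\xi_k-s_{t_k}/2}\,*_E^{-1}\widehat{\alpha}_k$. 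Combining the defining relation $\mu\wedge *_E\nu=\langle\mu,\nu\rangle\,dA$ with the isometry identity \eqref{antiso} (which gives $\langle *_E^{-1}\widehat{\alpha}_k,*_E^{-1}\alpha\rangle=\langle\alpha,\widehat{\alpha}_k\rangle$) produces the pointwise identity $\beta_k\wedge\alpha=e^{-s_{t_k}/2}e^{\xi_k}\langle\alpha,\widehat{\alpha}_k\rangle\,dA$, which integrated over $X$ already gives both right-hand expressions in \eqref{betawedge} with $\int_X$ in place of $\sum_{x}\int_{B(x;r)}$.

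It then remains to show that the complementary region $T_k:=X\setminus\bigcup_{x\in\mathcal{S}}B(x;r)$ contributes only $o(1)$, i.e. $\int_{T_k}\beta_k\wedge\alpha\to0$. Here I would invoke Theorem \ref{thm_blow_up_global_from_part_1}, by which $\xi_k$ is uniformly bounded from above on the compact set $T_k$, together with $\widehat{\alpha}_k\to\widehat{\alpha}_0$ in the finite-dimensional space $C_2(X)$ (so that $\|\widehat{\alpha}_k\|\le C$ on $X$). Since $*_E$ is an isometry, $\|\beta_k\|=e^{\xi_k-s_{t_k}/2}\|\widehat{\alpha}_k\|$, and using the divergence $s_{t_k}\to+\infty$ in the blow-up regime (from \cite{Tar_2}) I would conclude $\|\beta_k\|\le C e^{-s_{t_k}/2}\to0$ uniformly on $T_k$. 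Then, writing $\beta_k\wedge\alpha=\langle\beta_k,*_E^{-1}\alpha\rangle\,dA$ and using that $\|*_E^{-1}\alpha\|=\|\alpha\|$ is bounded, I obtain $\bigl|\int_{T_k}\beta_k\wedge\alpha\bigr|\le C e^{-s_{t_k}/2}\to0$, which is exactly the claimed $o(1)$.

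The main obstacle is precisely the role of the prefactor $e^{-s_{t_k}/2}$ in killing this tail, which is genuinely essential in the ``blow-up without concentration'' alternative (ii)-b of Theorem \ref{thm_blow_up_global_from_part_1}. In that case the measure $R_k e^{\xi_k}\,dA$ retains a non-concentrated part $R_0 e^{\xi_0}\,dA$, so $\int_{T_k}e^{\xi_k}\langle\alpha,\widehat{\alpha}_k\rangle\,dA$ converges to a generally nonzero finite limit, and the tail vanishes only because it is multiplied by $e^{-s_{t_k}/2}\to0$. Thus the crux is the divergence $s_{t_k}\to+\infty$, which cannot be read off from the elementary bounds \eqref{property_v} (these bound $s_{t_k}$ only from above) and must be imported as a key asymptotic from \cite{Tar_2}; by contrast, in the concentration alternative the tail already decays from $\xi_k\to-\infty$ on $T_k$, so once $s_{t_k}\to+\infty$ is in hand the argument is uniform across both subcases.
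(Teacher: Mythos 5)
Your algebraic half is correct and is exactly what the paper imports from formula (3.75) of \cite{Tar_2}: the Stokes reduction $\int_X\beta\wedge\alpha=\int_X\beta_k\wedge\alpha$, the identity $\beta_k=e^{\xi_k-s_k/2}\,*_E^{-1}\widehat{\alpha}_k$, and the passage to $\langle\alpha,\widehat{\alpha}_k\rangle$ via \eqref{antiso}. The gap is in the tail estimate. The input you invoke, $s_{t_k}\to+\infty$ in the blow-up regime, is not among the facts quoted from \cite{Tar_2} (the paper records only the one-sided bound $s_t\le d_t+C$ in \eqref{property_v}), and it cannot hold in general: consider blow-up with concentration at a single point $x_0$ of mass $m_{x_0}=1$ which is not a zero of $\widehat{\alpha}_0$ (the basic scenario behind Theorem \ref{thmB} for $\mathfrak{g}=2$). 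There Proposition \ref{local1} shows that $\int_{B(x_0;r)}e^{\xi_k}\langle\alpha,\widehat{\alpha}_k\rangle\,dA$ stays bounded with nonzero limit whenever $a_{x_0}(0)\neq 0$; feeding this into the (true) identity \eqref{betawedge}, the divergence $s_{t_k}\to+\infty$ would force $\int_X\beta\wedge\alpha=0$ for \emph{every} $\alpha\in C_2(X)$, hence $[\beta]=0$ by Serre duality (\eqref{dual}, \eqref{C_kappa_X_isometry}), a contradiction. So in that case $s_{t_k}$ remains bounded and your estimate $Ce^{-s_{t_k}/2}\to 0$ collapses. Note also that your case analysis is inverted: it is in the \emph{non}-concentration alternative that $s_{t_k}\to+\infty$ genuinely holds (there $d_k-s_k=O(1)$ by Remark \ref{rho1bis}, while $d_k\to+\infty$), whereas in the concentration alternative your fallback via $\xi_k\to-\infty$ on $T_k=X\setminus\bigcup_{x\in\mathcal{S}}B(x;r)$ still requires a lower bound $s_{t_k}\ge-C$, which is likewise not available a priori.

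The paper's proof avoids needing any control of $s_k$ by itself. On $T_k$ it uses not merely the upper bound $\xi_k\le C$ from Theorem \ref{thm_blow_up_global_from_part_1}, but the two-sided estimate $\xi_k=-(d_k-s_k)+O(1)$, obtained from elliptic estimates showing that $|w_k|$ is uniformly bounded on compact subsets of $X\setminus\mathcal{S}$. The tail is then at most $C_r\,e^{-s_k/2}e^{-(d_k-s_k)}=C_r\,e^{s_k/2-d_k}\le C_r'\,e^{-d_k/2}$, by the bound $s_k\le d_k+C$ of \eqref{property_v}; finally $d_k\to+\infty$ in case of blow-up because the minimum values $c_k=F_{t_k}(u_k,\eta_k)=\frac{1}{4}\int_X\vert\nabla w_k\vert^2\,dA-4\pi(\mathfrak{g}-1)d_k+O(1)$ are bounded above. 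The decisive point is that the decay comes from the combination $e^{s_k/2-d_k}$, which is small regardless of whether $s_k$ diverges, stays bounded, or drifts to $-\infty$; this is precisely what your argument is missing.
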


\begin{proof} 
By formula (3.75) in \cite{Tar_2} and by using  \eqref{antiso} we find:
\begin{eqnarray*}
&& \int_X \beta_0 \wedge \alpha = e^{\frac{-s_k}{2}} \int_X e^{\xi_k} <*^{-1} \hat{\alpha_k}, *^{-1} \alpha> d A =\\
&&\; = e^{\frac{-s_k}{2}} \int_X e^{\xi_k} <\alpha, \hat{\alpha_k}> d A =\\
&&\; = e^{\frac{-s_k}{2}} \left( \sum_{l=1}^{m} \int_{B(x_{l}; r)}  e^{\xi_k} <\alpha, \hat{\alpha_k}> dA \right.\\
&& \; + \left. \sum_{l=1}^{m} \int_{X \setminus \bigcup_{l=1}^{m}  B(x_{l}; r)}  e^{\xi_k} <\alpha, \hat{\alpha_k}> dA     \right).
\end{eqnarray*}
Since
$$
c_{k}=F_{t_{k}}(u_{k},\eta_{k})
=
\frac{1}{4}\int_{X}\vert \nabla w_{k} \vert^{2}dA
-
4\pi(\mathfrak{g}-1)d_{k}+O(1),
$$
we see that, in case of blow-up, necessarily:
$d_{k}\rightarrow +\infty \, \, \text{as} \,\,  k\to +\infty. $

Moreover,  $\Vert w_{k} \Vert_{L^{2}(X)}\leq C$ and 
we can use elliptic estimates to derive that the sequence $|w_{k}|$  is uniformly bounded away from the blow-up set $\mathcal{S}$ and therefore, 
\begin{equation}\label{xi_k_versus_d_k_minus_s_k}
\xi_{k}
=-  (d_{k}-s_{k})+O(1)
\; \text{ on compact sets of  } \;
X\setminus \mathcal{S}.
\end{equation}
We can use the last estimate in \eqref{property_v} together with \eqref{xi_k_versus_d_k_minus_s_k} and find a suitable constant $C=C_r > 0 )$ to obtain:
$$  e^{\frac{-s_k}{2}}\left\vert \sum_{l=1}^{m} \int_{X \setminus \bigcup_{l=1}^{m}  B(x_{l}; r)}  \!\!\!\!\!\!\!\!\!\!\!\!\!\!\!\! e^{\xi_k} <\alpha, \hat{\alpha}_k> dA    \right\vert   \leq C_r e^{\frac{-s_k}{2} - (d_k- s_k)} \leq C_r e^{\frac{-d_k}{2}} \to 0 $$ as $k \to + \infty,$
and \eqref{betawedge} is established.
\end{proof}

\begin{remark} \label{rho1bis}
\medskip
In view of \eqref{xi_k_versus_d_k_minus_s_k}, we may conclude that,
\begin{equation}\label{essek}
\mbox{``blow-up with concentration"  
occurs if and only if}\,\, d_{k}-s_{k}\longrightarrow +\infty.
\end{equation}
\end{remark}
\vskip.1cm
In order to establish Theorem \ref{mainth}, our effort in the following will be to estimate each of the integral terms in \eqref{betawedge}. \vskip.1cm 
To this purpose, we can fix $r>0$ sufficiently small, so that 
for any $x\in \mathcal{S}$ we can consider
local holomorphic z-coordinates at $x \in \mathcal{S}$ defined in $B(x;r),$ (as specified in \eqref{coord} and
 \eqref{hypcoord})  and write:  
\begin{equation}\label{local_coord}
\begin{array}{l}
\hat{\alpha}_k=\hat{a}_{k,x}(z)dz^2,\,\,\hat{\alpha}_0=\hat{a}_{0,x}(z)(dz)^2\,\mbox{and }
\hat{a}_{k,x}(z),\,\,\hat{a}_{0,x}(z)\,\mbox{ holomorphic in $\Omega_{r,x}$}\\ \\ \hat{a}_{k,x} \to{\hat{a}_{0,x}} \,\, k\to +\infty, \, \mbox{ uniformly in } \Omega_{r,x}.
\end{array}
\end{equation}
Furthermore,
\begin{equation}\label{coordalfa}
\alpha \in C_2(X)\implies \alpha = a_{x}(z)dz^2 \quad \mbox{$a_{x}(z)$ holomorphic in $\Omega_{r,x}.$}
\end{equation}
So, by means of formula \eqref{norme}, in $B(x;r)$ the following local expression in $z$-coordinates holds:
\begin{equation}\label{localcoordqdiff}  
\begin{array}{l} 
<\alpha \,,\, \hat{\alpha}_k>dA = a_x (\overline{\hat{a}_{k,x}}) |dz^2|^2 e^{2u_X} \frac{i}{2} dz \wedge d\bar{z}= 4 a_x (\overline{\hat{a_{k,x}}}) e^{-2u_X} \frac{i}{2} dz \wedge d\bar{z}. \\
 \end{array} \end{equation}  

\vspace{0.3cm}
We start our ``local" analysis around a given blow-up point, say $x_0\in \mathcal{S}.$ 

\medskip

For small $r>0,$ in \eqref{local_coord} we set, 
\begin{equation}\label{cappuccio a}
\hat{a}_{k}:=\hat{a}_{k,x_0} \, \mbox{ and } \, \hat{a}_{0}:=\hat{a}_{x_0} \\\\ \mbox{ with }\,\, \hat{a}_{k} \to \hat{a}_{0} \, \,\, \mbox{ uniformly in $\Omega_r:=\Omega_{r,x_0},$ } 
\end{equation}
as $k\to +\infty.$\\ 
Moreover we let, 
\begin{equation}\label{xkappa}
  x_k=x_{k,x_0}\in B(x_0; r): \xi_k(x_k):=\max\limits_{B(x_0; r)} \xi_k\to +\infty \, \text{and} \, x_k\to x_0, \, \text{as} \, k\to +\infty,
\end{equation}
and define:
\begin{equation}\label{zetakappabis} 
\begin{array}{l} 
z_k\in \Omega_r \,\, \mbox{the expression of $x_k$ in the given z-coordinates (at $x_0$)}, \\ 
\mbox{so that: }\,\,\,  
z_k \to 0 \,\,\mbox { as } k \to \infty. \end{array}
\end{equation}

As usual, to simplify notation,  we shall not distinguish between a function and its local expression in terms of the given $z-$coordinates defined in $\Omega_r$. 

Therefore, by using a translation and  by replacing: 
\begin{eqnarray}\label{translation}
 \xi_k(z) \to \xi_k(z+z_k) \,\,\, \mbox{ defined in } \Omega_r - z_k ,  
\end{eqnarray} 
for $\delta >0$ sufficiently small: $\bar{B}_{\delta} \subset  (\Omega_r - z_k ),$ we are reduced to analyse the local problem:
\begin{eqnarray}\label{equation_xi}
-\Delta \xi_k = W_ke^{\xi_k} - g_k 
    \, \text{ in $B_\delta,$  } \,
\int_{B_\delta} W_k e^{\xi_k} \frac{i}{2} dz \wedge d\bar z \leq C,
\end{eqnarray} 
where $\Delta
:=4\p_z\p_{\bar z}$ is the flat Laplacian in $\CC$ (or $\R^2$), and we have:  
\begin{equation}\label{equation_W}
\begin{array}{l}
W_{k}(z):= R_{k}(z+z_k)e^{2u_X(z+z_k)}=32|\hat{a}_{k}(z+z_k)|^2e^{-2u_X(z+z_k)}\\ g_{k}(z):= e^{2u_X(z+z_k)}f_k (z + z_k).
\end{array}
\end{equation} 
Thus, in view of \eqref{translation}, there holds:
\begin{eqnarray}\label{blow up seq}
    \xi_k(0)=\max_{B_{\delta}} \xi_k \to +\infty, \quad \text{ as } k\to +\infty,
\end{eqnarray}
and we may let the origin be the only blow-up point of $\xi_k$ in $\bar B_{\delta}$,   namely:
\begin{eqnarray}\label{blow-up seq bounds cpt set}
\forall    K \Subset \bar B_{\delta} \setminus \{0\} ~~ \max_K \xi_k \leq C ~~ \text{ with suitable } C=C(K)>0.
\end{eqnarray} 
By well known potential estimates (see \cite{Li} 
and \cite{BCLT}) we know also that,
$$
    \max_{\p B_{\delta}}\xi_k-\min_{\p B_{\delta}} \xi_k \leq C \quad \mbox{ \  \ } 
$$ for suitable $C=C(\delta)>0$. 

By the convergence properties in \eqref{f_0}, \eqref{6.27}, \eqref{cappuccio a} and by recalling \eqref{equation_W}, as $ k\to +\infty,$  we have: 
\begin{eqnarray}\label{convergenceW}
W_k \to W_0 \,\,\mbox{uniformly in } \bar B_{\delta}\,\, \mbox{ with } \,\,\, W_0:=32 |\hat{a}_0|^{2}e^{-2u_{X}}, 
\end {eqnarray} 
and for any $p\geq1,$
\begin{eqnarray}\label{g_k}
g_k  \to e^{2u_X} f_0:=g_0,  \quad \text{pointwise and in }  L^p(B_{\delta}). 
\end{eqnarray} 

\begin{remark}\label{massquantization_local} In view of the above properties we can apply Proposition 2.1 and Theorem 1 of \cite{Tar_1} to the "local" problem \eqref{equation_xi} and conclude the analogous blow-up alternatives and mass "quantization" property  as  stated in Theorem \ref{thm_blow_up_global_from_part_1} for the "global" problem \eqref{6.23}.
\end{remark}
By recalling \eqref{blowupmass}, we let: 
\begin{equation}\label{blowup mass2}
\begin{array}{l}
\sigma_0:=\sigma(x_0), \quad  m_0:=m_{x_0}= \frac{1}{8\pi}\sigma(x_0) \in \mathbb{N}, 
\qquad 1\leq m_0\leq(\mathfrak{g}-1). 
\end{array} \end{equation}

\medskip
i.e. $m_0$ is the (quantized) blow up mass at $x_0.$ 

The case $m_0=1$ has been handled in \cite{Tar_2} on the basis of the local pointwise estimates for the blow-up profile of $\xi_k$ around $x_0$ as established in Corollary 3.1 of \cite{Tar_1}. The following holds,
\begin{proposition}\label{local1}
Let $x_0 \in \mathcal{S}$ with $m_0=1.$ Then, for $r>0$  sufficiently small and for every $\alpha \in C_2(X),$ according to the local expressions in \eqref{local_coord} and \eqref{cappuccio a} at $x_0,$ we have:
\begin{equation}\label{total asymp behavior1}
  \begin{array}{l}   \int_{B(x_0; r)} e^{\xi_k} <\a\, \, , \,\widehat \a_k > dA= \frac{\pi}{|\hat{a}_k(z_k)|}( a_{x_0}(0)\frac{\overline{ \hat{a}_k(z_k)}}{|\hat{a}_k(z_k)|}
     +o(1)) + o_r(1) \\ \\\mbox{as $k \to + \infty$} \ \mbox{and where $o_r(1) \to 0$  as $r\to 0^+,$ uniformly on $k.$} \end{array}\end{equation}
\end{proposition}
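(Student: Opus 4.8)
The plan is to turn the integral into a concentration computation for a single standard bubble and then read off the leading coefficient. First I would rewrite the integrand by means of the local expression \eqref{localcoordqdiff} and the identity $W_k=32|\hat a_k|^2e^{-2u_X}$ from \eqref{equation_W}. Setting $d\mu_k:=W_ke^{\xi_k}\,\tfrac{i}{2}dz\wedge d\bar z$ (which is exactly $8\|\hat\alpha_k\|^2e^{\xi_k}\,dA$ read in $z$-coordinates), a one-line computation gives
$$e^{\xi_k}\langle\alpha,\hat\alpha_k\rangle\,dA = 4\,a_{x_0}(z)\,\overline{\hat a_k(z)}\,e^{-2u_X(z)}e^{\xi_k(z)}\,\tfrac{i}{2}dz\wedge d\bar z=\tfrac{1}{8}\,\frac{a_{x_0}(z)}{\hat a_k(z)}\,d\mu_k,$$
so that $\int_{B(x_0;r)}e^{\xi_k}\langle\alpha,\hat\alpha_k\rangle\,dA=\tfrac{1}{8}\int_{B(x_0;r)}\tfrac{a_{x_0}}{\hat a_k}\,d\mu_k$. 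The apparent division is harmless, since $\tfrac{1}{\hat a_k}\,d\mu_k=32\,\overline{\hat a_k}\,e^{-2u_X}e^{\xi_k}\,\tfrac{i}{2}dz\wedge d\bar z$ stays regular even across zeros of $\hat a_k$.

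Next I would feed in the mass-$1$ blow-up description. By Remark \ref{massquantization_local}, Theorem \ref{thm_blow_up_global_from_part_1} and the pointwise profile of Corollary 3.1 of \cite{Tar_1}, for $m_0=1$ the function $\xi_k$ is, to leading order, a single standard bubble centred at $z_k$ with scale $\epsilon_k=e^{-\xi_k(z_k)/2}\to0$; in particular $d\mu_k$ concentrates at $z_k$ with $\mu_k(B(x_0;r))\to 8\pi$ as $k\to+\infty$ for each fixed small $r$, while $\int_{B(x_0;r)}d\mu_k\le C$ uniformly. In the non-collapsing case $\hat a_0(0)\neq0$, which by \eqref{sigma_q} is precisely the situation $m_0=1$ with $x_0\notin Z^{(0)}$, the coefficient $\phi_k:=a_{x_0}/\hat a_k$ converges uniformly with its derivative to $a_{x_0}/\hat a_0$ on $B(x_0;r)$ and is uniformly Lipschitz there. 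Splitting
$$\int_{B(x_0;r)}\phi_k\,d\mu_k=\phi_k(z_k)\,\mu_k(B(x_0;r))+\int_{B(x_0;r)}\big(\phi_k-\phi_k(z_k)\big)\,d\mu_k,$$
the first summand equals $8\pi\,\phi_k(z_k)+o(1)$, whereas the second is bounded by $\sup_{B(x_0;r)}|\phi_k-\phi_k(z_k)|\cdot\int_{B(x_0;r)}d\mu_k=O(r)$ uniformly in $k$, i.e. $o_r(1)$.

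It then remains to recast the main term $\tfrac{1}{8}\cdot 8\pi\,\phi_k(z_k)=\pi a_{x_0}(z_k)/\hat a_k(z_k)$ in the stated shape. Using $a_{x_0}(z_k)=a_{x_0}(0)+o(1)$ (as $z_k\to0$) and $1/\hat a_k(z_k)=\overline{\hat a_k(z_k)}/|\hat a_k(z_k)|^2$, I obtain
$$\frac{\pi\,a_{x_0}(z_k)}{\hat a_k(z_k)}=\frac{\pi}{|\hat a_k(z_k)|}\Big(a_{x_0}(0)\,\frac{\overline{\hat a_k(z_k)}}{|\hat a_k(z_k)|}+o(1)\Big),$$
which is precisely \eqref{total asymp behavior1}; the unimodular factor $\overline{\hat a_k(z_k)}/|\hat a_k(z_k)|$ absorbs the residual $o(1)$, and the $O(r)$ remainder supplies the $o_r(1)$.

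The delicate point, and the reason the statement is normalised by $\pi/|\hat a_k(z_k)|$ rather than by $\pi/\hat a_k(z_k)$, is the case in which the blow-up sits at a zero of \emph{collapsing} type, i.e. $x_0\in Z_0$ and $\hat a_0(0)=0$, so that $|\hat a_k(z_k)|\to0$. There $\phi_k$ is no longer uniformly Lipschitz and the crude $O(r)$ bound above breaks down; one must instead use the precise pointwise profile of Corollary 3.1 of \cite{Tar_1} (not merely weak concentration) to estimate $\int(\phi_k-\phi_k(z_k))\,d\mu_k$ at the correct scale $\epsilon_k$, exploiting that the first-order (linear in $z-z_k$) contribution integrates to $o(1)$ against the radially symmetric leading bubble. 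This is the step I expect to be the main obstacle; once it is controlled, the computation closes exactly as above.
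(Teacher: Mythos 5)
Your reduction of the integral via \eqref{localcoordqdiff} and your treatment of the non-collapsing case $x_0\notin Z^{(0)}$ are correct, and there your route is genuinely more elementary than the paper's: since $\hat{a}_k\to\hat{a}_0$ uniformly with $\hat{a}_0(0)\neq 0$, weak concentration of $d\mu_k$ at $x_0$ with mass $8\pi$, combined with a uniform Lipschitz bound on $\phi_k=a_{x_0}/\hat{a}_k$, already yields \eqref{total asymp behavior1} in that case, with no need for pointwise profile estimates. The paper instead runs a single computation covering both cases, based on the bubble profile \eqref{profile} from Corollary 3.1 of \cite{Tar_1} and an explicit rescaling at the scale $\varepsilon_k=(8/(W_k(0)e^{\xi_k(z_k)}))^{1/2}$.

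However, you do not prove the collapsing case $x_0\in Z_0$, which is the substantive content of the proposition (it is the reason the expansion is normalized by $\pi/|\hat{a}_k(z_k)|$ with $|\hat{a}_k(z_k)|\to 0$): you only state what ``one must'' do and call it the main obstacle. This is a genuine gap, and the strategy you sketch would not close it as described. First, in the collapsing case $\hat{a}_k$ has zeros $\hat{p}_{j,k}\to 0$ accumulating at the blow-up point (see \eqref{psi}), and nothing prevents them from lying at distance $O(\varepsilon_k)$ from $z_k$; hence $\phi_k$ has poles in the region where the bubble lives, no uniform Taylor expansion of $\phi_k$ around $z_k$ is available at that scale, and any ``the linear term integrates to $o(1)$ by radial symmetry'' argument collapses — one must work directly with the regular density $a_{x_0}\overline{\hat{a}_k}\,e^{-2u_X}e^{\xi_k}$, as the paper does. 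Second, the remainder must be measured against a main term of size $1/|\hat{a}_k(z_k)|\to+\infty$: after rescaling, replacing $a_{x_0}\overline{\hat{a}_k}$ by its value at $z_k$ produces an error whose relative size is of order $\varepsilon_k/|\hat{a}_k(z_k)|$, and its control is not a symmetry effect but is exactly what the quantitative bounds in \eqref{profilo}, namely $W_k^2(0)e^{\xi_k(z_k)}\geq C$ and $W_k(0)e^{\xi_k(z_k)}\to+\infty$ (with $W_k^2(0)e^{\xi_k(z_k)}\to+\infty$ under concentration), deliver via Corollary 3.1 of \cite{Tar_1}. Without invoking these facts the estimate of $\int_{B(x_0;r)}(\phi_k-\phi_k(z_k))\,d\mu_k$ cannot be concluded, so your proposal is complete only where the statement is easy and incomplete precisely where it is nontrivial.
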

\begin{proof}
We could refer to \cite{Tar_2} but we sketch the proof for completeness. Since $m_0=1,$ then by \cite{Li_Shafrir} and \cite{Bartolucci_Tarantello} we know that, either $x_0 \notin Z^{(0)}$ (i.e. $x_0$ is not a zero of $\hat{\alpha}_0$) or $x_0 \in Z_{0}$ (i.e. $x_0$ corresponds to a zero for $\hat{\alpha}_0$ of "collapsing" type). In either case, we can rely on the point-wise estimates established in \cite{Li} and \cite{Tar_1} respectively, to obtain: 
 \begin{equation}\label{profile}
 \begin{array}{l}
        \xi_k(z +z_k) =\ln \left( \frac{ e^{\xi_k(z_k)}}{ \left( 1+\frac 1 8 W_{k}(0)e^{\xi_k(z_k)}|z|^2\right)^2 }\right) +O(1) \,\,\mbox{ with}\quad W_{k}(0) >0, \\ \text{for } z \in \Omega_{r,k}:=\Omega_r-z_k.
    \end{array} \end{equation}
In addition, in the "collapsing" case (where $W_{k}(0) \to 0^+\,\,\mbox{as}\, k\to +\infty$), in view of \eqref{profile} we know that:
\begin{equation}\label{profilo}
W_{k}^2(0)e^{\xi_k(z_k)} \geq C \,\,\mbox{ and }\, W_{k}(0)e^{\xi_k(z_k)}  \to +\infty\,\,\mbox{ as }\,k\to +\infty,
\end{equation}
for a suitable constant  $C>0.$ Moreover, in case blow-up occurs with the "concentration" property then:
$W_{k}^2(0)e^{\xi_k(z_k)} \to +\infty\,\,\mbox{ as }\,k\to +\infty,$ see Corollary 3.1 in \cite{Tar_1} for details.
\\
Next, we recall that: $W_k(0)=32|\hat{a}_k(z_k)|^2( 1 + o(1)),\,\,k\to +\infty.$ Thus, by setting:
$\varepsilon_k=(\frac{8}{W_k(0)e^{\xi_k(z_k)}})^{1/2} \to 0,\,\,k\to +\infty;$
by means of \eqref{profile} and \eqref{profilo}, for $\delta>0$ sufficiently small, we compute:
\begin{equation}
          \begin{array}{l}
              \int_{B(x_0; r)} e^{\xi_k} <\a\, \, , \,\widehat \a_k > dA=
              \\ \\=4 \int_{\Omega_{k,r}} e^{\xi_k(z + z_k)} a_{x_0}(z + z_k) \overline{\hat{a_k}}(z+z_k) e^{-2u_X(z+z_k)} \frac{i}{2} dz \wedge d\bar z 
               \\\\ = 
             \frac{32}{W_{k}(0)} ( \int_{B_{\frac{\delta}{\varepsilon_k}}}\frac{1}{(1+|z|^2)^2}a_{x_0}(\varepsilon_k z + z_k) \overline{\hat{a_k}}\left(\varepsilon_k z+z_k) e^{-2u_X(\varepsilon_k z+z_k)} \frac{i}{2} dz \wedge d\bar z +o(1)\right)+o_r(1) =\\\\
             \frac{32}{W_{k}(0)} (a_{x_0}(z_k) \overline{\hat{a_k}}(z_k)\int_{B_{\frac{\delta}{\varepsilon_k}}}\frac{1}{(1+|z|^2)^2} e^{-2u_X(\varepsilon_k z+z_k)} \frac{i}{2} dz \wedge d\bar z +o(1))+o_r(1) = \\\\
             =\frac{\pi}{|\hat{a_k}(z_k)|}( a_{x_0}(0)\frac{\overline{ \hat{a_k}(z_k)}}{|\hat{a_k}(z_k)|}
     +o(1))+o_r(1), \,\, k\to +\infty,
             \end{array}
             \end{equation}
and the term $o_r(1)$ can be dropped in case blow-up occurs with the "concentration" property.
\end{proof}
\medskip

When $m_0\geq 2,$  then necessarily: $ x_0 \in Z^{(0)}$ (see \cite{Li_Shafrir},  \cite{Bartolucci_Tarantello}), namely: $\hat{\alpha}_0(x_0)=0,$ or equivalently in local coordinates  $\hat{a}_0(0)=0.$

\medskip
So, by recalling \eqref{zeroalfakappa} and \eqref{zetakappa},  without loss of generality, we may suppose that, for suitable $s \in \{1, \ldots , N \},$ we have:
$$ q_{j,k} \in Z^{(k)}:\, q_{j,k} \to x_0 \in Z^{(0)}\,\mbox{ as }\,k\to +\infty,\,\,\forall j = 1, ..., s. $$
Moreover, by letting $\hat{p}_{j,k}$ the local expression of $q_{j,k}$ in the given holomorphic z-coordinates at $x_0,$ then by recalling  \eqref{cappuccio a} we have:

\begin{equation}\label{psi}
\begin{array}{l}
\hat{a}_{k}(z) = \prod_{j=1}^s (z-\hat{p}_{j,k})^{n_j}\psi_{k} (z) \to  \hat{a}_{0}(z) = z^{n}\psi_{0}(z),\,\, 

\mbox{uniformly on} \,\, \Omega_r\,\, \\ \\ n_{x_0}:=\sum_{j=1}^s n_j; \qquad \hat{p}_{j,k} \to 0 \quad \mbox{as}\ k \to +\infty,
\end{array}
\end{equation}
where $ \psi_k, \,\,  \psi_0$ are holomorphic functions never vanishing in $\bar B_{\delta},$ and in view of Lemma \ref{C_k} there holds:
\begin{eqnarray}\label{conv_psi}
 \psi_{k} \to \psi_0 \,\, \mbox{ uniformly in $\bar B_{\delta}$ } \,\,\, \text{ as } \,\, k\to +\infty.
 \end{eqnarray}

Therefore, for
\begin{eqnarray}\label{conv_pj}
p_{j,k}:= \hat{p}_{j,k} - z_k \to 0 \,\,\,\text{ as } \,\, k\to +\infty,
\end{eqnarray} 
we find:
\begin{eqnarray}\label{W}
W_{k}(z)=32 (\prod_{j=1}^s |z-p_{j,k}|^{2n_j})h_k(z) e^{-2u_X(z+z_k)},\,\, h_k(z)=|\psi_{k} (z+z_k)|^2
\end{eqnarray} 
in $ \bar B_{\delta}.$
In particular, we have: 
$$
    0<b_1\leq h_k(z) \leq b_2, ~ |\n h_k|\leq A \text{ and } h_k\to h_0 := |\psi_0|^2  ~ \text{ uniformly in } \bar B_{\delta},
$$ 
with suitable constants $0<b_1\leq b_2$ and $A>0.$

To simplify notations (and without loss of generality) from now on we shall use the normalization:
 \begin{eqnarray}\label{h_0}
   h_0(0)=1.
\end{eqnarray}

Again, without loss of generality we may let,
$$
    0\leq |p_{1,k}|\leq |p_{2,k}|\leq \cdots \leq |p_{s,k}| \to 0, \quad \text{ as } k\to +\infty.
$$
Our main effort in the sequel will be to identify, for the blow up point $x_0,$ the corresponding integer $N_{x_0}$ satisfying \eqref{vincoli} as claimed in Theorem \ref{mainth}. To illustrate its origin, we point out that, when $1\leq n_{x_0} \leq 2(m_0 -1),$ then we can simply take: $N_{x_0}=n_{x_0}.$ 
Indeed, we have: $1\leq m_0 \leq \mathfrak{g} -1$ and so in this case we are in position to use the "approximation" Lemma \ref{approximation} with the devisor $D_k :=\sum_{j=1}^sn_jq_{j,k}\to D:=n_{x_0}x_0\,\,\mbox{as } k\to +\infty,$ and conclude: 
\begin{equation}\label{approximation3}
\begin{array}{l}
\forall \alpha \in Q(D)\quad \exists \,\, \alpha_{k} \in Q(D_k): 
\alpha_{k} \to \alpha, \,\,\mbox{ as } k \to \infty.\\
\end{array}
\end{equation}
In particular, in local $z$-coordinates at $x_0$ we have:
\begin{equation}\label{applocal}
\begin{array}{l}
\alpha_k=a_{k,x_{0}}(z)dz^2 \,\,\mbox{and }\, \alpha=a_{x_0}(z)dz^2\quad
(a_{k,x_0}(z) \,\,\mbox{and }\,\, a_{x_0}(z) \,\,\mbox{holomorphic in}\,\, \Omega_r)\\  
a_{k,x_{0}} \to a_{x_0} \,\,\mbox{uniformly in}\,\, \Omega_r\,\, \mbox{as}\,\,  k \to +\infty.
\end{array}
\end{equation}
Moreover, we use standard notation and let: $a_{x_0}^{(n)}$ denote the $n$-complex derivative of the function $a_{x_0}.$

Thus, for the case: 
\begin{equation}\label{casobuono}
m_0\geq2\quad  1\leq n_{x_0} \leq 2(m_0 -1),\quad (n_{x_0}\,\, in \,\, \eqref{psi}).
\end{equation}
we obtain the following asymptotic expression:
\begin{proposition}\label{local2}
Let $x_0 \in \mathcal{S}$ with blow-up mass $m_0$ and suppose that \eqref{casobuono} holds.
Then for the divisors: $D_k :=\sum_{j=1}^sn_jq_{j,k}\to D:=n_{x_0}x_0$ in $X^{(n_{x_0})}$ and for $\alpha \in Q(D) $ let $\alpha_{k} \in Q(D_k)$
as given by \eqref{approximation3} and \eqref{applocal}.  The following holds:
\begin{eqnarray}\label{total asymp behavior}
    \int_{B(x_0; r)} e^{\xi_k} <\a_k\, \, , \,\widehat \a_k > dA= \pi m_0 \frac{a_{x_0}^{(n_{x_0})}(0)}{n_{x_0}!} \overline{ \psi_0(0)} +o(1)~ \text{ as } k\to +\infty, 
\end{eqnarray}
for $r>0$ sufficiently small. 
\end{proposition}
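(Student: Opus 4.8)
The plan is to reduce the left-hand side of \eqref{total asymp behavior} to a mass integral weighted by a holomorphic function, and then exploit the quantization of the blow-up mass at $x_0$. First I would pass to the local $z$-coordinates at $x_0$ of \eqref{local_coord}, \eqref{cappuccio a} and translate by $z_k$ as in \eqref{translation}, so that by \eqref{localcoordqdiff},
\begin{equation*}
\int_{B(x_0;r)} e^{\xi_k}\langle \alpha_k,\widehat\alpha_k\rangle\, dA
= 4\int_{\Omega_{k,r}} e^{\xi_k(z+z_k)}\, a_{k,x_0}(z+z_k)\,\overline{\hat a_k(z+z_k)}\, e^{-2u_X(z+z_k)}\,\tfrac{i}{2}\,dz\wedge d\bar z .
\end{equation*}
The decisive algebraic step uses the membership $\alpha_k\in Q(D_k)$: since $a_{k,x_0}$ vanishes at each $\hat p_{j,k}$ to order at least $n_j$, I would factor $a_{k,x_0}(z)=\prod_{j=1}^s(z-\hat p_{j,k})^{n_j}\,b_k(z)$ with $b_k$ holomorphic, and recall from \eqref{psi} that $\hat a_k(z)=\prod_{j=1}^s(z-\hat p_{j,k})^{n_j}\psi_k(z)$ with $\psi_k$ non-vanishing. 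Then $a_{k,x_0}\,\overline{\hat a_k}=\prod_j|z-\hat p_{j,k}|^{2n_j}\,b_k\,\overline{\psi_k}$, and comparing with \eqref{W}, where $W_k=32\prod_j|z-p_{j,k}|^{2n_j}|\psi_k|^2 e^{-2u_X}$, collapses the integrand (using $\overline{\psi_k}/|\psi_k|^2=1/\psi_k$) to
\begin{equation*}
\int_{B(x_0;r)} e^{\xi_k}\langle \alpha_k,\widehat\alpha_k\rangle\, dA
= \frac{1}{8}\int_{\Omega_{k,r}} W_k\, e^{\xi_k(\cdot+z_k)}\,\frac{b_k(\cdot+z_k)}{\psi_k(\cdot+z_k)}\,\tfrac{i}{2}\,dz\wedge d\bar z .
\end{equation*}

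The main obstacle is to guarantee that the measures $W_k e^{\xi_k(\cdot+z_k)}\tfrac i2 dz\wedge d\bar z$ (which, after translation, equal $R_k e^{\xi_k}\,dA$) concentrate into a pure Dirac mass, i.e. that we sit in the \emph{blow-up with concentration} regime and not in \emph{blow-up without concentration} (alternative (ii)-b of Theorem \ref{thm_blow_up_global_from_part_1}), where a surviving absolutely continuous part $R_0 e^{\xi_0}\,dA$ would contaminate the formula. Here the hypothesis \eqref{casobuono} is decisive: were without-concentration to occur, $\xi_0$ would carry the logarithmic singularity $\xi_0\sim-\tfrac{\sigma_0}{2\pi}\log|z|=-4m_0\log|z|$ forced by $\sigma_0\delta_{x_0}$ with $\sigma_0=8\pi m_0$, so that near $x_0$ one would have $R_0 e^{\xi_0}\sim|z|^{2n_{x_0}-4m_0}$; but $n_{x_0}\leq 2(m_0-1)$ yields exponent $2n_{x_0}-4m_0\leq-4<-2$, which is not locally integrable, contradicting that $R_0 e^{\xi_0}\,dA$ must be the absolutely continuous part of a finite limit measure. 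Hence under \eqref{casobuono} the blow-up is necessarily with concentration and $R_k e^{\xi_k}\,dA\rightharpoonup\sigma_0\,\delta_{x_0}$ on $B(x_0;r)$, with total mass $\sigma_0=8\pi m_0$.

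With concentration secured I would pass to the limit. By \eqref{conv_psi} and \eqref{approximation3}--\eqref{applocal} the holomorphic weight $b_k(\cdot+z_k)/\psi_k(\cdot+z_k)$ converges uniformly on $\bar B_\delta$ to $b_0/\psi_0$; moreover, since $\xi_k\to-\infty$ uniformly on the annulus $\Omega_{k,r}\setminus B_\delta$, the tail there is $o(1)$. Testing the weakly convergent measures against the uniformly convergent weight then gives
\begin{equation*}
\int_{B(x_0;r)} e^{\xi_k}\langle \alpha_k,\widehat\alpha_k\rangle\, dA
\longrightarrow \frac{1}{8}\,\sigma_0\,\frac{b_0(0)}{\psi_0(0)}=\pi m_0\,\frac{b_0(0)}{\psi_0(0)} .
\end{equation*}
Finally, applying Lemma \ref{C_k} to $a_{k,x_0}\to a_{x_0}$ identifies $a_{x_0}(z)=z^{n_{x_0}}b_0(z)$ with $b_0(0)=a_{x_0}^{(n_{x_0})}(0)/n_{x_0}!$, while the normalization \eqref{h_0} gives $|\psi_0(0)|=1$, hence $1/\psi_0(0)=\overline{\psi_0(0)}$; substituting yields precisely \eqref{total asymp behavior}.
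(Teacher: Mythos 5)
Your proposal is correct and follows essentially the same route as the paper's own proof: it rules out blow-up without concentration by the non-integrability of $|z|^{2n_{x_0}-4m_0}$ forced by the Dirac singularity under \eqref{casobuono}, and then tests the resulting Dirac-mass convergence against the uniformly convergent holomorphic weight coming from the $Q(D_k)$-factorization and Lemma \ref{C_k}. The only cosmetic difference is that you fold the non-vanishing factor $|\psi_k|^2$ into $W_k$ and divide by $\psi_k$ (invoking $|\psi_0(0)|=1$ at the end to write $1/\psi_0(0)=\overline{\psi_0(0)}$), whereas the paper keeps the factor $C_k\overline{\psi}_k$ explicit in the integrand; the two computations are equivalent.
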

\begin{proof}
To simplify notations, we set:
$$n:=n_{x_0}.$$
We observe that, under the given assumption \eqref{casobuono}, the blow-up of $\xi_k$ at $0$ must occurs with  the "concentration" property.\\ 
\medskip
Indeed, if by contradiction, we assume that (along a subsequence):  $\xi_k \to \xi$ in $C^2_{loc}(B_\delta \setminus \{0\}),$ with $\xi$ satisfying: 
$$\left\{ \begin{array}{l} - \Delta \xi = 32|z|^{2n}h_0e^{-2u_X} e^{\xi} + 8 \pi m_0 \delta_0   \ \mbox{in} \  B_\delta \\ \int_{B_\delta} |z|^{2n} h_{0}e^{-2u_X}e^{\xi}\frac{i}{2}dz \wedge d\bar{z} < C \end{array}\right. $$ 
then, the presence of the Dirac singularity at the origin implies that, $\xi(z) = 4m_0 \log\left(\frac{1}{|z|}\right) + O(1) \ \mbox{as} \ z \to 0,$ with  $n+1 < 2m_0$ by \eqref{casobuono}, and this is impossible as it violates the integrability of $|z|^{2n} e^{\xi}$ around the origin.  

In other words, by recalling \eqref{W} and \eqref{h_0}, we have:
\begin{eqnarray}\label{W_k}
 32\prod_{j=1}^s |z-p_{j,k}|^{2n_j}e^{\xi_k} \rightharpoonup 8\pi m_0 \delta_0, ~~ \text{ weakly in the sense of measures. } 
    \end{eqnarray} 

Moreover, since $\alpha_{k} \in Q(D_k) \,\,\mbox{and}\,\,\alpha \in Q(D)$ (locally) we have:
\begin{eqnarray}\label{app_alpha2} 
 a_{k,x_0}(z)=\prod_{j=1}^s(z - \hat p_{j,k})^{n_j}C_{k}(z) \,\,\mbox{ and }  a_{x_0}(z)=z^{n}C(z),
\end{eqnarray}  
with $C_{k}(z)\,\,\mbox {and } C(z)$ holomorphic in  $\bar{B_\delta},$ and according to Lemma \ref{C_k} we find:
\begin{eqnarray}\label{convunif}
C_k \to C\,\,\mbox{ uniformly in} \, \bar {B}_\delta,\,\mbox { as } k \to \infty,\,\mbox{ and }\,  C(0)=\frac{a_{x_0}^{(n)}(0)}{n!}. 
\end{eqnarray} 

As a consequence, by using \eqref{W_k}, \eqref{app_alpha2}, \eqref{convunif} we find:
\begin{equation}
\begin{array}{l}
\int_{B(x_0; r)} e^{\xi_k} <\a_k\, \, , \,\widehat \a_k > dA=
              \\ \\=4 \int_{B_\delta} e^{\xi_k} a_{k}(z + z_k) (\bar{z}- \overline{p_k})^n \overline{\psi}_k(z+z_k) e^{-2u_X(z+z_k)} \frac{i}{2} dz \wedge d\bar z +o(1) \\ \\
             =\;4\int_{B_\delta} e^{\xi_k}\prod_{j=1}^s|z - p_{j,k}|^{2n_j} C_{k}
             (z +z_k)\overline{\psi}_k(z + z_k) e^{-2u_X(z + z_k)} \frac{i}{2} dz \wedge d\bar z \\\\ + o(1) = \pi m_0 
             \,C(0)\overline{ \psi_0(0)} +o(1)= \pi m_0 \,\frac{a^{(n)}(0)}{n!} \overline{ \psi_0(0)} +o(1),~ \text{ as } k\to +\infty.
\end{array}
\end{equation}
          as claimed.

\end{proof}
From now on we assume that, 
\begin{equation}\label{n+1}
m_0 \geq 2\,\,\mbox{ and }\,\, n:=n_{x_0} > 2(m_0 -1),
\qquad (n_{x_0} \,\,\mbox{in} \, \eqref{psi}).
\end{equation}

Notice that when \eqref{n+1} holds then (in view of \eqref{sigma_q}) necessarily $s\geq2$ and the blow-up point $x_0$ must be a zero for $\hat{\alpha}_0$ of "collapsing" type, namely: $x_0 \in \mathcal{S} \cap Z_0 $.
As a consequence, the ``concentration" property for $\xi_k$ is no longer ensured and alternative (ii)-b) of Theorem \ref {thm_blow_up_global_from_part_1} could hold. Hence now we can write:
\begin{eqnarray*}\label{2.1*}
\int_{(\Omega_{r} - z_k)}  \prod_{j=1}^s |z-p_{j,k}|^{2n_j} h_k(z) e^{-2u_X(z + z_k)} e^{\xi_k} \frac{i}{2} dz \wedge d\bar z \\ =  \int_{B_\delta}  \prod_{j=1}^s |z-p_{j,k}|^{2n_j} h_k(z) e^{-2u_X(z + z_k)} e^{\xi_k} \frac{i}{2} dz \wedge d\bar z  +  o_{r}(1),
\end{eqnarray*}
for any $\delta >0$ sufficiently small and where $ o_{r}(1) \to 0 \,\, \mbox{ as} \,\, r\to 0^+, \, \mbox{ uniformly in $k.$} $ \\

Also note that when \eqref{n+1} holds then the divisors $D_k$ and $D$ in Proposition \ref{local2} are no longer suitable for our purposes. Indeed now we cannot control their degree as required by Lemma \ref{approximation}, and so the "approximation" property   \eqref{approximation3} is no longer ensured.

Thus, we need to dig into the blow-up profile of $\xi_k,$ in order to find suitable replacements of the integer $n$ and corresponding divisors $D_k\, \mbox{and} \, D$ for which Lemma \ref{approximation} applies.

To this purpose let,
$$
    \tau_{k}^{(1)}:=|p_{s,k}|\to 0^+         ~~\text{ as } k\to +\infty,
$$
and consider,
\begin{equation}\label{phikappa}
 \v_k^{(1)}(z):=\xi_k(\tau_{k}^{(1)} z) +2(n+1) \ln \tau_{k}^{(1)}\quad \mbox{and} \qquad p_{j,k}^{(1)}:=\frac{p_{j,k}}{\tau_{k}^{(1)}},\quad j=1,\cdots, s. 
\end{equation}
Since, $0\leq |p_{1,k}^{(1)}|\leq |p_{2,k}^{(1)}|\leq \cdots \leq |p_{s,k}^{(1)}| =1,$ we can also assume that, 
\begin{equation}\label{p_j}
p_{j,k}^{(1)}\to p_{j}^{(1)}, \text{ as } k\to +\infty,
\end{equation}
(possibly along a subsequence) with suitable points, $p_{j}^{(1)},\,\, j=1,\ldots, s.$  Thus setting:
$$
 h_{1,k}(z):=h_k(\tau_{k}^{(1)} z) e^{-2u_X(\tau_{k}^{(1)} z + z_k)}  ~~\text{  and }  g_{1,k}(z):=(\tau_{k}^{(1)})^2  g_k( \tau_k z),
$$
we  have:
\begin{equation}\label{eq of varphi-k}
\left\{
\begin{array}{lc}
-\Delta \v_k^{(1)} =32\prod\limits_{j=1}^s |z-p_{j,k}^{(1)}|^{2n_j} h_{1,k}(z) e^{\v_k^{(1)}} -g_{1,k}(z)  & \text{ in } \Omega_{k,\delta}:=\{ |z| < {\frac{\delta}{\tau_{k}^{(1)}}}\} \\
 \int_{\Omega_{k,\delta}} \prod\limits_{j=1}^s |z-p_{j,k}^{(1)}|^{2n_j}  h_{1,k}( z)e^{\v_k^{(1)}} \frac{i}{2} dz \wedge d\bar z\leq C \end{array} \right.
\end{equation}
with
$$
 h_{1,k}(z) \to h_0(0)=1\text{ and } g_{1,k} \to 0, \text{ uniformly in } C_{loc}(\RR^2), \text{ as } k \to +\infty.
$$

Let, 
\begin{eqnarray}\label{lambda-varphi}
 \lambda_{\varphi^{(1)}}:=\lim_{R\to +\infty} \lim_{k\to +\infty} 32\int_{B_R} \prod_{j=1}^s |z-p_{j,k}^{(1)}|^{2n_j} h_{1,k}( z)e^{\v_k^{(1)}(z)} \frac{i}{2} dz \wedge d\bar z, 
\end{eqnarray}
and we easily check that: $\lambda_{\varphi^{(1)}}\leq 8\pi m_0.$\\ 
Also from \cite{Lee_Lin_Wei_Yang} we know that the following identity holds:
\begin{eqnarray}\label{s0 identity}
 \s_0^2 -\lambda_{\varphi^{(1)}}^2 =8\pi (n+1) (\s_0-\lambda_{\varphi^{(1)}}) 
\end{eqnarray}
(see also the Appendix in \cite{Tar_1} for a detailed proof of \eqref{s0 identity}).

Hence, from  \eqref{s0 identity} we obtain that, $\lambda_{\varphi^{(1)}}\in 8\pi \NN\cup \{0\}$, and
\begin{equation}\label{mzero}
\begin{array}{l}
\text{either } \lambda_{\varphi^{(1)}}=\s_0=8\pi m_0\,
\text{ or }\, 8\pi m_0=8\pi (n+1)-\lambda_{\varphi^{(1)}}\\
\mbox{and if the latter case holds then necessarily: }\,\,2m_0-1\geq(n+1).
\end{array} 
\end{equation} 
The following holds:
\begin{proposition}\label{violate}
If \eqref{n+1} holds then:
$$ \begin{array}{l} (i) \  \lambda_{\varphi^{(1)}} = 8 \pi m_0, \\ (ii) \ \varphi_k^{(1)}(0) = max_{\Omega_k^{(1)}} \varphi_k^{(1)} \to + \infty, \ \mbox{as $ k \to + \infty$  (up to subsequences),}\\ \mbox{i.e. $\varphi_k^{(1)}$ blows up at $0$}. \\  (iii) \ \mbox{Blow- up occurs with the  "concentration" property, namely:}\\
\prod_{j=1}^{s} |z- p_{j,k}^{(1)}|^{2 n_j} h_{1,k}(z)e^{\varphi_k^{(1)}(z)}  \rightharpoonup
 \sum_{y \in \mathcal{S}_{x_0}^{(1)}} 8 \pi m_{y}^{(1)} \delta_y, \\ \mbox{weakly in the sense of measure,} 
\mbox{ where $\mathcal{S}_{x_0}^{(1)}$ denotes the blow-up set of $\varphi_k^{(1)}$}\\ \mbox{and  $m_{y}^{(1)} \in \mathbb{N}$ is the blow-up mass at the point $y \in \mathcal{S}_{x_0}^{(1)}$} \mbox{ and moreover: }\\ \sum_{y \in \mathcal{S}_{x_0}^{(1)}} m_{y}^{(1)} = m_0.  \end{array}. $$ 

\end{proposition}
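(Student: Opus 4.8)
My plan is to establish the three assertions in order, the common engine being an integrability constraint at infinity for the rescaled limit problem which, under \eqref{n+1}, is incompatible with the survival of any regular (non-atomic) part. Part (i) is read off directly from the dichotomy \eqref{mzero}, itself a consequence of the Pohozaev identity \eqref{s0 identity}: its first alternative is exactly the claim, while the second would force $2m_0-1\geq n+1$, i.e. $n\leq 2(m_0-1)$, contradicting \eqref{n+1}. Hence $\lambda_{\varphi^{(1)}}=8\pi m_0=\sigma_0$, so the entire blow-up mass at $x_0$ is already recovered at the scale $\tau_{k}^{(1)}=|p_{s,k}|$ and no mass leaks to a coarser scale.

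\emph{Key claim: no regular part survives.} Applying the local blow-up dichotomy of Remark \ref{massquantization_local} to the rescaled sequence $\varphi_k^{(1)}$ solving \eqref{eq of varphi-k} on the expanding domains $\Omega_{k,\delta}$, suppose for contradiction that a nontrivial limit persists, i.e. $\varphi_k^{(1)}\to\xi_\infty$ in $C^2_{loc}(\RR^2\setminus\mathcal{S}_{x_0}^{(1)})$ for some finite (possibly empty) set $\mathcal{S}_{x_0}^{(1)}$; this single scenario covers both the compactness alternative and the regular part occurring in ``blow-up without concentration''. Since $g_{1,k}\to 0$ and $h_{1,k}\to h_0(0)=1$, the limit solves
\[
-\Delta\xi_\infty=32\prod_{j=1}^{s}|z-p_j^{(1)}|^{2n_j}e^{\xi_\infty}+\sum_{y\in\mathcal{S}_{x_0}^{(1)}}8\pi m_y^{(1)}\delta_y\quad\text{on }\RR^2,
\]
and by part (i) the total mass of its right-hand side equals $\lambda_{\varphi^{(1)}}=8\pi m_0$. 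The asymptotic theory for finite-mass singular Liouville equations (\cite{Chen_Li_1}, \cite{Prajapat_Tarantello}) then gives $\xi_\infty(z)=-4m_0\ln|z|+O(1)$ as $|z|\to+\infty$. Since $\prod_{j}|z-p_j^{(1)}|^{2n_j}\sim|z|^{2n}$ there, the continuous density behaves like $|z|^{2n-4m_0}$, which is integrable near infinity if and only if $n<2m_0-1$. But \eqref{n+1} gives $n\geq 2m_0-1$, so the density is \emph{not} integrable, contradicting finiteness of the mass. Hence no regular part can survive: necessarily $\varphi_k^{(1)}\to-\infty$ in $C^2_{loc}(\RR^2\setminus\mathcal{S}_{x_0}^{(1)})$, i.e. blow-up occurs with the concentration property.

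\emph{Parts (ii) and (iii).} Because $\xi_k(0)=\max_{B_\delta}\xi_k$ by \eqref{blow up seq} and the rescaling and translation defining $\varphi_k^{(1)}$ send the origin to the origin, the maximum of $\varphi_k^{(1)}$ is attained at $z=0$. By the Key claim the positive total mass $8\pi m_0$ concentrates into finitely many atoms, so $\mathcal{S}_{x_0}^{(1)}\neq\emptyset$ and $\max\varphi_k^{(1)}\to+\infty$; as this maximum sits at the origin, $\varphi_k^{(1)}(0)=\max\varphi_k^{(1)}\to+\infty$ and $0\in\mathcal{S}_{x_0}^{(1)}$, which is (ii). Assertion (iii) is then the content of the Key claim together with the mass quantization of Remark \ref{massquantization_local}: the weighted measure of \eqref{lambda-varphi} converges weakly to the purely atomic measure $\sum_{y}8\pi m_y^{(1)}\delta_y$ with $m_y^{(1)}\in\mathbb{N}$, and equating total masses $\sum_y 8\pi m_y^{(1)}=\lambda_{\varphi^{(1)}}=8\pi m_0$ yields $\sum_y m_y^{(1)}=m_0$.

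\emph{Main obstacle.} The crux is the asymptotic step: rigorously justifying the decay rate $\xi_\infty(z)\sim-4m_0\ln|z|$ (hence the integrability threshold $n<2m_0-1$) for the entire limit problem carrying a \emph{multi-point} singular weight $\prod_j|z-p_j^{(1)}|^{2n_j}$, and confirming that the rescaled problems \eqref{eq of varphi-k} converge to this limit on all of $\RR^2$ without loss of mass---which is exactly where part (i), equivalently the Pohozaev identity \eqref{s0 identity}, is indispensable. One must also verify that the blow-up set $\mathcal{S}_{x_0}^{(1)}$ is confined to a bounded region, so that $\lambda_{\varphi^{(1)}}$ indeed accounts for every atom and no mass escapes at the $\varphi^{(1)}$-scale.
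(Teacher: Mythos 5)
Your proposal is correct, and it runs on the same analytical engine as the paper's proof — the Pohozaev-type mass identity \eqref{s0 identity} for (i), and the mechanism ``a finite-mass limit would violate integrability at infinity under \eqref{n+1}'' for forcing concentration — but your route through part (ii) is genuinely different in execution. The paper proves (ii) by a \emph{second} blow-up: assuming $\varphi_k^{(1)}(0)\leq C$, it rescales $\xi_k$ at the height scale $\varepsilon_k=e^{-\xi_k(0)/(2(n+1))}$, uses Harnack-type estimates to extract an entire limit $\Phi$ with $\Phi(0)=\max\Phi=0$, pins $\lambda_\Phi=8\pi m_0$ via a second Pohozaev identity \eqref{Lin-Yang}, and only then runs the Chen--Li asymptotics/integrability contradiction. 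You avoid the second rescaling altogether: under $\varphi_k^{(1)}(0)\leq C$ the sequence $\varphi_k^{(1)}$ is uniformly bounded above, so (up to a subsequence) it either converges in $C^2_{loc}(\RR^2)$ or tends to $-\infty$ locally uniformly; the first case dies by your Key Claim, with the limit's mass pinned at $8\pi m_0$ directly by (i), and the second dies because it forces $\lambda_{\varphi^{(1)}}=0$, contradicting (i) since $8\pi m_0\geq 16\pi$. This buys real economy — one rescaling and one Pohozaev identity instead of two — and it is legitimate, precisely because working at the scale $\tau_k^{(1)}$ lets part (i) do the mass-pinning that the paper delegates to \eqref{Lin-Yang}. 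Two small caveats. First, in the blow-up-without-concentration branch of your Key Claim the limit carries Dirac masses, and Chen--Li-type asymptotics do not apply verbatim to such a solution; one should first subtract the logarithmic singularities, i.e.\ pass to $\Psi=\xi_\infty+4\sum_{y}m_y^{(1)}\log|z-y|$, exactly as the paper does with $\Psi^{(1)}$ — a routine reduction, after which your exponent count $|z|^{2n-4m_0}$ is exactly what survives. Second, the reference for the $O(1)$-precise asymptotics (which is genuinely needed at the borderline case $n=2m_0-1$, where an $\epsilon$-loss version would fail) is Theorem 2 of \cite{Chen_Li_2} rather than the classification results \cite{Chen_Li_1}, \cite{Prajapat_Tarantello}; and the ``escaping mass'' worry in your closing paragraph is already resolved by (i) combined with the double-limit definition \eqref{lambda-varphi} of $\lambda_{\varphi^{(1)}}$, in exactly the way your argument implicitly exploits.
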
 

\begin{proof}
In view of our assumption and \eqref{mzero} we easily deduce (i).
In order to establish (ii) we argue by contradiction and suppose that $\varphi_k^{(1)}(0) \leq C.$ Thus, by setting $\varepsilon_k = e^{\frac{- \xi_k(0)}{2(n+1)}}$ we have that:
$$0 < \frac{\tau_k^{(1)}}{\varepsilon_k} = \tau_k^{(1)} e^{\frac{ \xi_k(0)}{2(n+1)}} = e^{\frac{- \varphi_k(0)}{2(n+1)}} \leq C.$$
Let, 
$$ \Phi_k(z) := \xi_k(\varepsilon_k z) - \xi_k(0) = \xi_k(\varepsilon_k z) + 2(n+1) \log(\varepsilon_k), \,\, \ |z| < \frac{r}{\varepsilon_k} := R_k, $$

satisfying: 

$$ \left\{ \begin{array}{l}  - \Delta \Phi_k  =32 \prod_{j=1}^{s} |z - \frac{p_{j,k}}{\varepsilon_k}|^{2 n_j} h_k(\varepsilon_k z)e^{-2u_X(\varepsilon_k z + z_k)} e^{\Phi_k} - g_{2,k}(z) \ \mbox{in} \  B_{R_k}  \\ \\ \Phi_k(0) = max_{B_{R_k}} \Phi_k = 0 \\  \\ \int_{B_{R_k}}  \prod_{j=1}^{s} |z - \frac{p_{j,k}}{\varepsilon_k}|^{2 n_j} h_k(\varepsilon_k z)e^{-2u_X(\varepsilon_k z + z_k)} e^{\Phi_k}\frac{i}{2}dz \wedge d\bar{z}  \leq C,   \end{array} \right.$$
with $g_{2,k}(z):=\varepsilon_k^2  g_k( \varepsilon_k z).$
Since, $ \frac{|p_{j,k}|}{\varepsilon_k}   \leq \frac{\tau_k^{(1)}}{\varepsilon_k} \leq C,$ we may assume (up to a subsequence)  that, 
\begin{equation}\label{pq2} \frac{p_{j,k}}{\varepsilon_k} \to \hat{q}_{j},\,\, 1 \leq j \leq s. \end{equation} 

Moreover, by well known Harnack type inequalities valid for $\Phi_k,$ see \cite{Brezis_Merle} and \cite{Bartolucci_Tarantello}, we find that $\Phi_k$ is uniformly bounded in $C^{2,\gamma}_{loc}.$ Therefore, by taking a subsequence if necessary, we find that $\Phi_k \to \Phi$ in $C^{2,\gamma}_{loc} \ \mbox{as} \ k \to + \infty, $ with $\Phi$ satisfying:

$$  \left\{ \begin{array}{l}  - \Delta \Phi  =32 \prod_{j=1}^{s} |z - \hat{q_j}|^{2 n_j} e^{\Phi} \ \mbox{in} \  \mathbb{R}^2  \\ \\ \Phi(0) = max_{\mathbb{R}^2} \Phi = 0 \\  \\ \int_{\mathbb{R}^2}  32\prod_{j=1}^{s} |z - \hat{q}_j|^{2 n_j} e^{\Phi} \frac{i}{2}dz \wedge d\bar{z}:= \lambda_{\Phi} \leq 8 \pi m_0,
  \end{array} \right. $$ 
with
 $$\lambda_{\Phi}:= \lim_{R \to + \infty} \lim_{k \to + \infty}  \int_{B_{R_k}} 32 \prod_{j=1}^{s} |z - \frac{p_{j,k}}{\varepsilon_k}|^{2 n_j} h_k(\varepsilon_k z) e^{-2u_X(\varepsilon_k z + z_k)}e^{\Phi_k} \frac{i}{2}dz \wedge d\bar{z}.   $$
Again, we have:
\begin{equation}\label{Lin-Yang}  (8 \pi m_0)^2 - \lambda_{\Phi}^2 = 8\pi(n+1)(8\pi m_0- \lambda_{\Phi}), \end{equation}
(cf. \cite{Lee_Lin_Yang_Zhang} and appendix in 
\cite{Tar_1}). As in \eqref{mzero}, the relation \eqref{Lin-Yang} together with the given assumption \eqref{n+1}  implies that necessarily: 
$\lambda_{\Phi} = 8 \pi m_0.$
At this point, we can use Theorem 2 of \cite{Chen_Li_2} to find that,
$$ \Phi(z) = \frac{\lambda_{\Phi}}{2 \pi} \log \left(\frac{1}{|z|} \right) + O(1) =  4m_0 \log \left(\frac{1}{|z|} \right) + O(1),  \ \mbox{for} \,\,
|z| \geq 1$$  
and the integrability of the term: $ \prod_{j=1}^{s} |z - \hat{q}_j|^{2 n_j} e^{\Phi} $ in $\mathbb{R}^2$ implies that $2m_0 > n+1,$ in contradiction with \eqref{n+1}. 
Thus, we have proved that $\varphi_k^{(1)}(0) \to + \infty$ as $ k \to + \infty.$ So the sequence $\varphi_k^{(1)}$ admits a (non empty) blow-up set  $\mathcal{S}_{x_0}^{(1)}.$ In particular $0 \in \mathcal{S}_{x_0}^{(1)},$ and for $y \in \mathcal{S}_{x_0}^{(1)}$ we denote by $m_y^{(1)} \in \mathbb{N}$ the corresponding blow-up mass. If by contradiction we suppose that blow-up occurs \underline{without} the concentration property, then every blow-up point is of \underline{collapsing} type,  that is, $\mathcal{S}_{x_0}^{(1)} \subseteq \{p_{j}^{(1)},\, j=1,...,s \}$ and:
\begin{equation}\label{J_y}
I_y := \{ j \in \{1, \ldots,s\} : p_{j}^{(1)}=y \}\,\mbox{ satisfies: } |I_y|\geq 2,\quad \forall y \in \mathcal{S}_{x_0}^{(1)}
 \end{equation}
($|I_y|$ is the cardianality of $I_y$).

Moreover (along a subsequence) $\varphi_k^{(1)} \to \varphi^{(1)}$ uniformly in $C^2_{loc}(\mathbb{R}^2 \setminus \mathcal{S}_{x_0}^{(1)})$  with $\varphi^{(1)}$ satisfying: 
$$  \left\{ \begin{array}{l}  - \Delta \varphi^{(1)}  = 32\prod_{j=1}^{s} |z - p_{j}^{(1)} |^{2 n_j} e^{\varphi^{(1)}} + 8\pi \sum_{y \in \mathcal{S}_{x_0}^{(1)}} m_y^{(1)} \delta_y \ \mbox{in} \  \mathbb{R}^2   \\  \\32 \int_{\mathbb{R}^2}  \prod_{j=1}^{s} |z - p_{j}^{(1)} |^{2 n_j} e^{\varphi^{(1)}(z)}\frac{i}{2}dz \wedge d\bar{z} + 8 \pi  \sum_{y \in \mathcal{S}_{x_0}^{(1)} } m_y^{(1)} = \lambda_{\varphi^{(1)}}=8\pi m_0.   \\ \\ 
\end{array} \right.$$   

For $y \in \mathcal{S}_{x_0}^{(1)}$ we set: $n_y = \sum_{j \in I_y} n_j$ and  $I_0 = \{1, \ldots,s\} \setminus \bigcup_{y \in \mathcal{S}_{x_0}^{(1)}} I_y$ (possibly empty) and consider the function:
$$ \Psi^{(1)}(z) = \varphi^{(1)}(z) + 4 \sum_{y \in \mathcal{S}_{x_0}^{(1)}} m_y^{(1)} \log|z-y|.$$  
We see that $\Psi^{(1)}$ extends smoothly at any $y \in \mathcal{S}_{x_0}^{(1)}$ and satisfies:
\begin{equation}\label{maineqPsi1}  \left\{ \begin{array}{l}  - \Delta \Psi^{(1)}  = ( 32\prod_{y \in \mathcal{S}_{x_0}^{(1)}} |z -y|^{2 n_y-4m_y^{(1)})} ) ( \prod_{j \in I_0} |z - p_{j}^{(1)}|^{2 n_j} ) e^{\Psi^{(1)}}  \ \mbox{in} \  \mathbb{R}^2   \\  \\ \int_{\mathbb{R}^2} ( 32\prod_{y \in \mathcal{S}_{x_0}^{(1)}} |z -y|^{2 n_y-4m_y^{(1)})})  ( \prod_{j \in I_0} |z - p_{j}^{(1)}|^{2 n_j}e^{\Psi^{(1)}}\frac{i}{2}dz \wedge d\bar{z} =\\ =8 \pi(m_0- \sum_{y \in \mathcal{S}_{x_0}^{(1)}} m_y^{(1)}), \end{array} \right.  \end{equation}
with the understanding that, if $I_0$ is empty then the corresponding product term included in \eqref{maineqPsi1} must be dropped.\\ 
\medskip
Hence (as above) we find that:
 $\Psi^{(1)}(z)=4( m_0 - \sum_{y \in \mathcal{S}_{x_0}^{(1)}}m_y) \log\left( \frac{1}{|z|} \right)+O(1),$ for $|z| > 1,$
 and again the integrability condition implies that necessarily:  
 $2m_0-1 \geq n+1,$ in contradiction with our assumption.
 
 In conclusion, $\varphi_k^{(1)}$ must blow-up with the "concentration" property, namely:
 $$  \prod_{j=1}^{s} |z- p_{j,k}^{(1)}|^{2 n_j} h_{1,k}(z)e^{\varphi_k^{(1)}(z)}  \rightharpoonup
 \sum_{y \in \mathcal{S}_{x_0}^{(1)}} 8 \pi m_y^{(1)} \delta_y, \ \mbox{weakly in the sense of measure}. $$ Consequently, 
 $ 8 \pi m_0 = \lambda_{\varphi^{(1)}} = 8 \pi \sum_{y \in \mathcal{S}_{x_0}^{(1)}} m_y^{(1)},$ that is:
 $\sum_{y \in \mathcal{S}_{x_0}^{(1)}} m_y^{(1)} =m_0$ as claimed.  \end{proof}

At this point, we are ready to complete the asymptotic description of the local integral terms in \eqref{betawedge} as follows:

\begin{thm}\label{local3}
Let $x_0 \in \mathcal{S}$ admit blow-up mass $m_0$ such that \eqref{n+1} holds.  
There exist a suitable constant $b_{x_0} \in \mathbb{C}\setminus \{0\},$  a sequence $\varepsilon_{k,x_0} \to 0^+,$ an integer $ N_0\in \mathbb{N}:\,\, 1\leq N_0 \leq 2(m_0-1)$ (in particular, $1\leq N_0< n$) and an effective divisor $D_k$ satisfying: $deg\, D_k = N_0 \,\, \mbox{ and }\,\, D_k \to D:=N_0x_0 \,\,\mbox{as }k\to +\infty $ such that, for every $\alpha \in Q(D)\,\, \mbox{and } \alpha_k \in Q(D_k): \,\, \alpha_k \to \alpha \,\,\mbox{as}\,k\to +\infty,$  in terms of the local expression \eqref{coordalfa} for $\alpha$ at $x_0,$ there holds:
\begin{equation}\label{total asymp behavior2}
\int_{B(x_0; r)} e^{\xi_k} <\a_k\, \, , \,\widehat \a_k > dA=\frac{\pi b_{x_0}}{\varepsilon_{k,x_0}}(\frac {a_{x_0}^{(N_0)}(0)}{N_{0}!} \overline{ \psi_0(0)} +o(1)) + o_r(1),
\end{equation}
as $k\to +\infty,$ and $o_r(1) \to 0$ as $r\to 0^+$ uniformly in $k.$
\end{thm}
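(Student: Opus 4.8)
The plan is to reduce the hard case \eqref{n+1} to the two already settled cases, namely Proposition \ref{local1} ($m_0=1$) and Proposition \ref{local2} (the ``good'' case $n_{x_0}\le 2(m_0-1)$), by iterating the rescaling that produced $\varphi_k^{(1)}$. First I would pass to the flat local model: by \eqref{localcoordqdiff} together with the translation \eqref{translation}, the integral $\int_{B(x_0;r)}e^{\xi_k}\langle\alpha_k,\widehat\alpha_k\rangle\,dA$ equals, up to $o_r(1)$, the flat integral over $B_\delta$ of $e^{\xi_k}\,a_k\,\prod_{j}(\bar z-\overline{p_{j,k}})^{n_j}\,\overline{\psi_k}\,e^{-2u_X}$. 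Changing variables $z=\tau_k^{(1)}w$ and using $e^{\xi_k(\tau_k^{(1)}w)}=(\tau_k^{(1)})^{-2(n+1)}e^{\varphi_k^{(1)}(w)}$, the antiholomorphic product $(\tau_k^{(1)})^{n}$, and the area factor $(\tau_k^{(1)})^{2}$, this integral becomes $(\tau_k^{(1)})^{-n}$ times the corresponding integral written in terms of $\varphi_k^{(1)}$ and the rescaled zeros $p_{j,k}^{(1)}$ of \eqref{phikappa}.

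The core is an induction on the pair $(m_0,n)$ ordered lexicographically, with base cases $m_0=1$ (Proposition \ref{local1}) and $n\le 2(m_0-1)$ (Proposition \ref{local2}). By Proposition \ref{violate}, $\varphi_k^{(1)}$ blows up \emph{with concentration} at a finite set $\mathcal{S}_{x_0}^{(1)}\ni 0$, with masses $m_y^{(1)}$ summing to $m_0$ and local collapsing orders $\sum_{\{j:\,p_j^{(1)}=y\}}n_j$. The decisive structural fact is that the origin is always a blow-up point of $\varphi_k^{(1)}$ by Proposition \ref{violate}(ii), whereas the outermost rescaled zero satisfies $|p_s^{(1)}|=1\neq 0$; hence $p_s^{(1)}\neq 0$ and the collapsing order at the origin is at most $n-n_s<n$. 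Consequently, at every $y\in\mathcal{S}_{x_0}^{(1)}$ there is a strict decrease in $(m_0,n)$: either $|\mathcal{S}_{x_0}^{(1)}|\ge 2$ and then $m_y^{(1)}<m_0$, or $\mathcal{S}_{x_0}^{(1)}=\{0\}$, in which case $m_0^{(1)}=m_0$ but the collapsing order is strictly smaller than $n$. Thus, after finitely many rescalings the process terminates: along the chain of centres collapsing to $x_0$ one reaches a configuration satisfying $n'\le 2(m'-1)$, where Proposition \ref{local2} applies, while every off–chain point carries strictly smaller mass and is governed by the inductive hypothesis (or by Proposition \ref{local1} when its mass is $1$).

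Granting termination, I would transform $J_k$ level by level. Each rescaling multiplies the integral by the explicit power recorded above and replaces $a_k$ by the local coefficient of an approximating section $\alpha_k\in Q(D_k)$, where at each level $D_k$ is the effective divisor of the zeros that stay clustered at the current centre; its degree equals the current collapsing order, which along the terminating chain never exceeds $2(m_0-1)<2(\mathfrak g-1)$, so Lemma \ref{approximation} legitimately produces $\alpha_k\to\alpha$ with the expansions \eqref{applocal}, and the holomorphic factors $\psi_k$ behave as in \eqref{psi} via Lemma \ref{C_k}. The rescaled integral then splits as a sum over $y\in\mathcal{S}_{x_0}^{(1)}$ of terms to which the lower cases or the inductive hypothesis apply; tracking the accumulated powers of the scaling parameters (coming from the prefactor, the antiholomorphic weight, and the vanishing of $a_k$ at the centre) shows that these terms scale with distinct exponents, and the chain realising the finest scale dominates. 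At its terminal good-case centre, Proposition \ref{local2} contributes the factor $\pi m'\,\dfrac{a_{x_0}^{(N_0)}(0)}{N_0!}\,\overline{\psi_0(0)}$ with $N_0:=n'$ the terminal collapsing order, so that $1\le N_0\le 2(m_0-1)<n$; collecting the net scale along the chain yields the single factor $b_{x_0}/\varepsilon_{k,x_0}$ with $\varepsilon_{k,x_0}\to 0^+$ and $b_{x_0}\in\mathbb C\setminus\{0\}$, while the subdominant centres are absorbed into the $o(\cdot)$ and $o_r(\cdot)$ remainders. This is exactly \eqref{total asymp behavior2}.

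The main obstacle is the bookkeeping that keeps the scheme consistent. One must (i) carry out the power–counting that compares the competing scales and confirms that a \emph{single} leading term survives, so that no cancellation occurs and $b_{x_0}\neq 0$; (ii) verify that the dominant centre always sits at a genuine collapsing zero, which is what forces $N_0\ge 1$ rather than a mass-one profile with $N_0=0$; and (iii) check that the divisors $D_k$ selected at successive levels remain compatible and of degree $\le 2(m_0-1)$, so that Lemma \ref{approximation} is applicable throughout and the terminal divisor satisfies $D_k\to N_0x_0$. A further delicate point is that, unlike in Proposition \ref{local2}, ``concentration'' is guaranteed by Proposition \ref{violate} only at the current level: at each sub-level it must be re-established by repeating the $\Psi$–function argument of Proposition \ref{violate}, based on the quantization identities \eqref{s0 identity} and \eqref{Lin-Yang}, in order to exclude blow-up without concentration before Proposition \ref{local2} can be invoked at the terminal stage.
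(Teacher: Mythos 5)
Your high-level skeleton (rescale by $\tau_k^{(1)}$, re-establish concentration via Proposition \ref{violate} at each sub-level, and terminate through a strict decrease of mass or collapsing order, in the spirit of Lemma \ref{s,m}) does match the paper's strategy. But there is a genuine gap exactly at the point you set aside as ``bookkeeping'': the construction of the divisor $D_k$ and the mechanism that leaves a \emph{single} surviving term. In the paper, $D_k$ is not made of zeros of $\widehat\alpha_k$ clustered along one chain: it consists of the blow-up \emph{centres} $x_{k,y}^{(1)}, x_{k,w}^{(2)}(y),\dots$ (the local maximum points of the rescaled functions, which in general are not zeros of $\widehat\alpha_k$ at all) attached to \emph{every} blow-up point of \emph{every} level, together with the matched zeros $n_j q_{j,k}$ taken only over the ``good'' index sets $I_{(1)}, J_{(2)}(y),\dots$, with exactly one centre removed --- see \eqref{divisore1bis} and \eqref{divisore3}; accordingly $N_0+1=\sum_{y}(n_y^{(1)}+1)$ accumulated recursively over all blow-up points, not the terminal collapsing order $n'$ of a single chain. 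This structure is forced on you, because the integral receives a contribution from every Dirac mass of the concentrating measure, and the only available mechanism to suppress the contribution at a given centre is a factor of $\alpha_k$ vanishing there; that is precisely what produces the $o(1)$'s in \eqref{intIb} and \eqref{intIIbis}.

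With your divisor the off-chain contributions are not suppressed, and your claim that ``the chain realising the finest scale dominates'' is unjustified --- and false in general. At an off-chain collapsing blow-up point $y$ the integrand carries the unmatched antiholomorphic factor $\prod_{j\in I_y}(\overline{z-p_{j,k}^{(1)}})^{n_j}$, so the term to estimate is $\int e^{\varphi_k^{(1)}}\prod_{j\in I_y}|z-p_{j,k}^{(1)}|^{n_j}\cdot O(1)$, which is not $o(1)$ and can in fact diverge: for a standard bubble of height $e^{c_k}$ and vanishing order $n_y$ it grows like $e^{c_k n_y/(2(n_y+1))}$. Its rate is governed by the height and scale of the bubble at $y$, parameters \emph{independent} of the scales $\tau_k^{(1)},\tau_k^{(2)}(\cdot)$ along your chosen chain: these are not powers of a common parameter, so ``distinct exponents'' yields no comparison and no single dominant term, and cancellations among terms of the same size cannot be excluded (so $b_{x_0}\neq 0$ is not guaranteed either). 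The paper never needs to compare rates at different blow-up points except among terms already damped to $o(1)$ relative to their own scale by the vanishing factors, and there the comparison is arranged by the minimality selection \eqref{minimo} of \emph{which} centre to drop; that choice is what makes the removed centre's $O(1)$ term beat all the damped ones. Without these two ingredients --- centres included in the divisor, and the selection rule for the removed one --- your induction cannot close, and the expansion \eqref{total asymp behavior2} with your value $N_0=n'$ is not what your computation would deliver.
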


Observe that, when blow-up for $\xi_k$ occurs with the "concentration" property then the term $o_r(1)$ can be dropped. 
\begin{remark}\label{remm2}
It is important to note that the given information about $N_0$ is crucial and represent the main core of Theorem \ref{local3}.\\ In particular, for the divisor $D_k \to D \,\,\mbox{as}\, k\to +\infty$  in Theorem \ref{local3}, it allows us to  use Lemma \ref{approximation}, so that every $\alpha \in Q(D)$  can be "approximated" by a suitable sequence $\alpha_k \in Q(D_k),$ that is
we can always guarantee \eqref{approximation3} and \eqref{applocal}. 
 \end{remark}
Next, we observe the  following :
\begin{remark}\label{remm1}
It is possible to interpret Proposition \ref{local1} as a particular case of either Proposition \ref{local2} or Theorem \ref{local3}. In other words, if $m_0=1$ then we can recast the expansion \eqref{total asymp behavior1} as a particular case, either of the expansions  \eqref{total asymp behavior} with  $n=0$ or \eqref{total asymp behavior2} with $N_0=0$ respectively and when we take $\alpha_k=\alpha \in C_{2}(X),\, \forall k\in \mathbb{N}.$ 

Indeed, when $m_0=1$ then either $x_0 \notin Z^{(0)},$ and so blow-up occurs with the "concentration" property and $\hat{a}_k(z_k) \to \hat{a}_0(0)=\psi_{0}(0)\neq 0,$ actually $|\hat{a}_0(0)|=1$ in view of the normalization \eqref{h_0}.   Thus in this case, \eqref{total asymp behavior1} just reads
as in \eqref{total asymp behavior} exactly when $n=0\,\,\mbox{and } m_0=1.$ 

Or $x_0 \in Z_{0},$ then  $\hat{a}_{k}(z_k) \to 0 \,\,\mbox{ as }\, k\to +\infty $ and $\hat{a}_{k}(z)\,\mbox{ and } \hat{a}_{0}(z)$ must take the form  \eqref{psi}. In those notation and (possibly along a subsequence) letting: 
$\frac{p_{j,k}}{|p_{j,k}|} \to \tilde{p_j}, \,\,k\to +\infty$ we have: 
\begin{eqnarray}\label{acappuccio}
\frac{\hat{a}_k(z_k)}{|\hat{a}_k(z_k)|} \to \prod_{j=1}^s (-\tilde{p}_j)^{n_j}\frac{\psi_0(0)}{|\psi_0(0)|}.
\end{eqnarray}

Since $|\psi_0(0)|=1,$  in this case \eqref{total asymp behavior1} reads exactly as \eqref{total asymp behavior2} with $\varepsilon_{k,x_0} = |\hat{a_k}(z_k)| \to 0,\,\mbox{ as }\, k \to+\infty, $ $b_{x_0}=\prod_{j=1}^s (-\tilde{p}_j)^{n_j}\neq0$ and $N_0=0.$
\end{remark}
By combining Proposition \ref{local1}, Proposition \ref{local2}, Theorem \ref{local3}  and Remark \ref{remm1}, we arrive at the following conclusion:
\begin{corollary}\label{local4}
    
$\forall x \in \mathcal{S}$ there exist a suitable constant $B_{x} \in \mathbb{C}\setminus \{0\},$ and a uniformly bounded sequence $\varepsilon_{x,k} >0 \,$  such that,\\
(i) if $m_x\geq 2$ then $ \exists\, N_x\in \mathbb{N}:\,\, 1\leq N_x \leq 2(m_x-1)$ and an effective divisor $D_{x,k}$ satisfying:
$deg\, D_{x,k} = N_x \,\, \mbox{ and}\,\, D_{x,k} \to D_x:=(N_x)x$ such that, \\
$\forall\alpha \in Q(D_x)\,\, \exists \, \alpha_k \in Q(D_{x,k}): \,\, \alpha_k \to \alpha \,\,\mbox{as}\,\,k\to +\infty,$ and in terms of the local expression \eqref{coordalfa} for $\alpha$ at $x,$ we have:
\begin{equation}\label{total asymp behavior3}
   \int_{B(x;r)} e^{\xi_k} <\a_k\, \, , \,\widehat \a_k > dA=\frac{B_x}{\varepsilon_{x,k}}(a_x^{(N_x)}(0)+o(1)) + o_r(1).
\end{equation}
\\
(ii) If $m_x=1$ then \eqref{total asymp behavior3} holds with $N_x=0.$ Namely: for any $\alpha,\,\, \alpha_k \in C_2(X):\,\alpha_k \to \alpha $ there holds:
\begin{equation}
 \int_{B(x;r)} e^{\xi_k} <\alpha_k\, \, , \,\widehat \a_k > dA=\frac{B_x}{\varepsilon_{x,k}}( a_x(0) +o(1)) + o_r(1).
\end{equation}
Moreover, if $x \in \mathcal{S}$ is a zero for $\hat\alpha_0$ with multiplicity $n_x$ and $n_x+1 > 2m_x-1,$ then $\varepsilon_{x,k} \to 0.$

\end{corollary}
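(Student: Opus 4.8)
The plan is to derive Corollary~\ref{local4} as a unified repackaging of the three local asymptotic expansions already established, organized by a case analysis on the blow-up mass $m_x$ and on whether $x$ is a zero of $\wh\a_0$ of collapsing type. I would use throughout the normalization \eqref{h_0}, which gives $|\psi_0(0)|=1$, and the quantization rule \eqref{sigma_q}, by which a non-collapsing zero necessarily carries mass $m_x\geq 2$; consequently, when $m_x=1$ the point $x$ is either not a zero of $\wh\a_0$ or a collapsing zero, and these are exactly the two alternatives appearing in Proposition~\ref{local1} and Remark~\ref{remm1}.

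First I would dispose of $m_x\geq 2$. If $1\leq n_x\leq 2(m_x-1)$, i.e. \eqref{casobuono} holds, I apply Proposition~\ref{local2}: I set $N_x:=n_x$, take the constant sequence $\ve_{x,k}\equiv 1$, the divisor $D_{x,k}:=\sum_j n_j q_{j,k}\to N_x x$ of degree $N_x$, and put $B_x:=\pi m_x\,\overline{\psi_0(0)}/n_x!$. Then $1\leq N_x\leq 2(m_x-1)$ by assumption, and $B_x\neq 0$ since $\psi_0(0)\neq0$. If instead $n_x>2(m_x-1)$, i.e. \eqref{n+1} holds, I apply Theorem~\ref{local3} verbatim: it already provides an integer $N_0$ with $1\leq N_0\leq 2(m_x-1)$, a sequence $\ve_{k,x_0}\to 0$ and a divisor $D_k\to N_0 x$, so I set $N_x:=N_0$, $\ve_{x,k}:=\ve_{k,x_0}$, and $B_x:=\pi b_{x_0}\,\overline{\psi_0(0)}/N_0!\neq0$ (using $b_{x_0}\neq0$). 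In both sub-cases $\deg D_{x,k}=N_x\leq 2(m_x-1)\leq 2(\mathfrak g-2)<2(\mathfrak g-1)$, so Lemma~\ref{approximation} applies to $D_{x,k}\to D_x$ and yields the approximating sequence $\a_k\in Q(D_{x,k})$; matching constants then turns \eqref{total asymp behavior} (resp. \eqref{total asymp behavior2}) into \eqref{total asymp behavior3}, the $o_r(1)$ being absent in the first sub-case because blow-up there occurs with concentration.

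Next I would treat $m_x=1$, taking $N_x=0$, $D_x=0$, and the constant sequence $\a_k=\a\in C_2(X)=Q(0)$. When $x\notin Z^{(0)}$, blow-up is with concentration and $\wh a_k(z_k)\to\psi_0(0)$ with $|\psi_0(0)|=1$, so Proposition~\ref{local1} collapses to $\pi(a_x(0)\overline{\psi_0(0)}+o(1))$, giving $\ve_{x,k}\equiv1$ and $B_x:=\pi\overline{\psi_0(0)}\neq0$. When $x$ is a collapsing zero, I set $\ve_{x,k}:=|\wh a_k(z_k)|\to0$ and pass to the limit in $\overline{\wh a_k(z_k)}/|\wh a_k(z_k)|$ via \eqref{acappuccio}; factoring this limit out of \eqref{total asymp behavior1} reproduces \eqref{total asymp behavior3} with $B_x:=\pi\,\overline{\psi_0(0)}\prod_j(-\tilde p_j)^{n_j}$, which is nonzero because $|\tilde p_j|=1$ and $|\psi_0(0)|=1$ (cf. Remark~\ref{remm1}).

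Finally, the \emph{moreover} clause follows by inspecting the four cases: $\ve_{x,k}\to0$ holds precisely in the collapsing sub-case of $m_x=1$ and in case \eqref{n+1}, and in each of these $x$ is a zero of $\wh a_0$ with $n_x>2(m_x-1)$, i.e. $n_x+1>2m_x-1$; in the remaining cases $\ve_{x,k}$ is constant. I do not expect a genuine obstacle here, since the mathematical content lives in the three results being combined; the step requiring the most care is purely the bookkeeping of prefactors — the three source expansions are written in mutually different normalizations, so I must check in every case that the extracted $B_x$ is genuinely nonzero and that the $o(1)$ and $o_r(1)$ remainders absorb consistently into the single form \eqref{total asymp behavior3}, with $o_r(1)$ dropped exactly when blow-up occurs with concentration.
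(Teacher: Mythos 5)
Your proposal is correct and follows essentially the same route as the paper, which obtains Corollary \ref{local4} precisely by combining Proposition \ref{local1}, Proposition \ref{local2}, Theorem \ref{local3} and Remark \ref{remm1} with the very same case analysis on $m_x$ and on whether $x$ is a (collapsing) zero of $\hat\alpha_0$. The only cosmetic nitpick: in the collapsing $m_x=1$ case the factor extracted from \eqref{acappuccio} enters \eqref{total asymp behavior1} through $\overline{\hat a_k(z_k)}/|\hat a_k(z_k)|$, so $B_x$ should carry the conjugate $\overline{\prod_j(-\tilde p_j)^{n_j}}\,\overline{\psi_0(0)}$, which of course does not affect its nonvanishing.
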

More importantly, from Corollary \ref{local4} 
we can establish Theorem \ref{mainth} and Theorem \ref{thm1}. More precisely, 
with , we 
The following holds:  
\begin{thm}\label{mainthbis}
Let $\mathcal{S}$ be the blow-up set of $\xi_k.$ For every $x \in \mathcal{S}$ with blow-up mass $m_x \in \mathbb{N},$ there exists $N_x \in \mathbb{N} \cup \{0\}$ with
$N_x  \leq 2(m_x -1),$ so that for the divisor $D := \sum_{x \in \mathcal{S}} (N_x + 1) x$ the following holds:
\begin{equation}\label{orthog} \int_X \beta \wedge \alpha = 0, \quad \forall \alpha \in Q(D).
\end{equation} 
In particular, letting:
\begin{equation}\label{fine}
\begin{array}{l}
\mathcal{S}=\{ x_1, \ldots, x_k\}\quad m_j:=m_{x_{j}}\quad N_j:=N_{x_{j}}+1 \quad j=1\ldots k\\ \\
\textbf{m}=(m_1,\ldots, m_k) \quad \mbox{and }\quad  \textbf{N}=(N_1,\ldots, N_k)
\end{array}
\end{equation} 
then (in the notation of \eqref{Ykmn}) we have:
$1\leq k \leq \mathfrak{g}-1,\quad ({\bf{m}},{\bf{N}}) \in \mathfrak{I}_k$ 
and
$$
\quad[\beta]_{\mathbb{P}} \in \tilde{\Sigma}_{k,\textbf{m},\textbf{N}}$$
\end{thm}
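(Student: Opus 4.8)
The plan is to assemble the \emph{local} asymptotic expansions of Corollary \ref{local4} into the \emph{global} pairing identity of Lemma \ref{lemmaexpand}, and to read off \eqref{orthog} from the vanishing of the leading local coefficients, the genuinely delicate point being the control of the blow-up weights. First I would collect, for each $x\in\mathcal{S}$, the integer $N_x$ produced by Corollary \ref{local4} (with $N_x=0$ when $m_x=1$), and set $D=\sum_{x\in\mathcal{S}}(N_x+1)x$ and $D'=\sum_{x\in\mathcal{S}}N_x\,x$. Writing $\mathcal{S}=\{x_1,\dots,x_k\}$, $m_j=m_{x_j}$, $N_j=N_{x_j}+1$ as in \eqref{fine}, the bound $N_x\le 2(m_x-1)$ yields $1\le N_j\le 2m_j-1$, while \eqref{massatotale} gives $1\le k\le\sum_j m_j\le\mathfrak{g}-1$; hence $(\mathbf{m},\mathbf{N})\in\mathfrak{I}_k$ and $D=\sum_j N_j x_j\in Y_{(k,\mathbf{m},\mathbf{N})}$ (see \eqref{Ykmn}). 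Moreover $0\le\deg D'=\sum_x N_x<\deg D=\sum_x(N_x+1)\le 2\sum_x m_x-k\le 2(\mathfrak{g}-1)-k<2(\mathfrak{g}-1)$, so both Lemma \ref{approximation} and the Riemann--Roch count \eqref{2.07*} apply to $D'$ and $D$.

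Now fix $\alpha\in Q(D)\subseteq Q(D')$. Since $\widehat{D}_k:=\sum_{x\in\mathcal{S}}D_{x,k}\to D'$ with $0\le\deg D'<2(\mathfrak{g}-1)$, Lemma \ref{approximation} furnishes a \emph{single global} sequence $\alpha_k\in Q(\widehat{D}_k)\subseteq\bigcap_{x}Q(D_{x,k})$ with $\alpha_k\to\alpha$ in $C_2(X)$ (when $\deg D'=0$ one simply takes $\alpha_k=\alpha$ via Corollary \ref{local4}(ii)). Because $\alpha_k\to\alpha$ uniformly on $X$, the proof of Lemma \ref{lemmaexpand} applies verbatim to the $\alpha_k$ (the exterior bound $C_re^{-d_k/2}$ is uniform in $k$), and $\int_X\beta_0\wedge\alpha_k\to\int_X\beta_0\wedge\alpha$ by continuity of the Serre pairing. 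Feeding Corollary \ref{local4} into each local term, abbreviating $L_x(\alpha):=a_x^{(N_x)}(0)$ (the $N_x$-th complex derivative of $\alpha$ at $x$) and $w_{x,k}:=e^{-s_k/2}B_x/\varepsilon_{x,k}$, I obtain for every $\alpha\in Q(D')$ the master identity
\[
\int_X\beta_0\wedge\alpha=\lim_{r\to0}\lim_{k\to\infty}\Big(\sum_{x\in\mathcal{S}}w_{x,k}\big(L_x(\alpha)+o(1)\big)+e^{-s_k/2}\,O(o_r(1))\Big).
\]

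The crux is to show the weights $w_{x,k}$ are \textbf{bounded}; this is the one genuinely delicate step, since Corollary \ref{local4} allows $\varepsilon_{x,k}\to0$. I would argue by normalization: put $W_k=\max_{x}|w_{x,k}|$, suppose $W_k\to\infty$, divide the master identity by $W_k$, and pass to a subsequence along which $w_{x,k}/W_k\to\tilde\gamma_x$ with $|\tilde\gamma_{x_*}|=1$ for some $x_*$. Since the left-hand side is a fixed finite number it tends to $0$, the normalized weights are bounded by $1$ so the $o(1)$ errors vanish, and $e^{-s_k/2}/W_k=\varepsilon_{x_*,k}/|B_{x_*}|$ is bounded (because $\varepsilon_{x,k}$ is bounded above), so the $O(o_r(1))$ term disappears as $r\to0$; one is left with $\sum_{x}\tilde\gamma_x L_x(\alpha)=0$ for all $\alpha\in Q(D')$. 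But by \eqref{2.07*} the map $\alpha\mapsto(L_x(\alpha))_{x\in\mathcal{S}}$ is surjective from $Q(D')$ onto $\mathbb{C}^{k}$ with kernel exactly $Q(D)$, because $\dim_{\mathbb{C}}Q(D')-\dim_{\mathbb{C}}Q(D)=\deg D-\deg D'=k$; hence the $L_x$ are linearly independent, forcing $\tilde\gamma_x\equiv0$ and contradicting $|\tilde\gamma_{x_*}|=1$. Therefore $W_k$ is bounded, and since $\varepsilon_{x,k}$ is bounded above this also gives $e^{-s_k/2}\le C$.

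With bounded weights I pass to a subsequence where $w_{x,k}\to\gamma_x\in\mathbb{C}$; then $w_{x,k}\,o(1)\to0$ and $e^{-s_k/2}\,O(o_r(1))\to0$ as $r\to0$, so the master identity collapses to
\[
\int_X\beta_0\wedge\alpha=\sum_{x\in\mathcal{S}}\gamma_x\,L_x(\alpha),\qquad\forall\,\alpha\in Q(D').
\]
For $\alpha\in Q(D)$ one has $L_x(\alpha)=a_x^{(N_x)}(0)=0$ for every $x\in\mathcal{S}$, whence $\int_X\beta\wedge\alpha=\int_X\beta_0\wedge\alpha=0$, which is exactly \eqref{orthog}. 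Finally, since $D\in Y_{(k,\mathbf{m},\mathbf{N})}$ and $(\mathbf{m},\mathbf{N})\in\mathfrak{I}_k$, Remark \ref{sigma} yields $[\beta]_{\mathbb{P}}\in\tilde\Sigma_{(k,\mathbf{m},\mathbf{N})}$, completing the proof. I expect the weight-boundedness to be the principal obstacle; everything else is bookkeeping resting on Corollary \ref{local4}, Lemma \ref{approximation}, and the Riemann--Roch dimension counts.
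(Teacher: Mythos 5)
Your proposal is correct, and it reproduces the paper's overall architecture: the same reduction of the Serre pairing to local integrals via Lemma \ref{lemmaexpand}, the same local input from Corollary \ref{local4}, the same global approximating divisor $\sum_{x\in\mathcal{S}}D_{x,k}\to \sum_{x\in\mathcal{S}}N_x\,x$ fed into Lemma \ref{approximation}, the same degree bookkeeping, and the same final appeal to Remark \ref{sigma}. The one substantive divergence is precisely the step you identify as the crux, namely the boundedness of the weights $e^{-s_k/2}B_x/\varepsilon_{x,k}$ (the paper's CLAIM). The paper proves it constructively: it selects $x_0\in\mathcal{S}$ with $\varepsilon_{x_0,k}\le\varepsilon_{x,k}$ for all $x$ (so that all cross error terms are absorbed into a single $o(1)/\varepsilon_{x_0,k}$), and then tests the expansion \eqref{espansione} against one special differential $\alpha_0\in Q(D-x_0)\setminus Q(D)$ supplied by Corollary \ref{alphaspecial}; since the leading coefficient $B_{x_0}a_{x_0}^{(N_{x_0})}(0)$ is nonzero while the left-hand side is a fixed number, the maximal weight must be bounded. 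You instead normalize by $W_k=\max_x|w_{x,k}|$, extract limits $\tilde\gamma_x$, obtain $\sum_{x}\tilde\gamma_x L_x\equiv 0$ on $Q(D')$, and kill this by the linear independence of the evaluation functionals $L_x$, which you deduce from the dimension count $\dim_{\mathbb{C}}Q(D')-\dim_{\mathbb{C}}Q(D)=k$ in \eqref{2.07*}. The two arguments are duals of one another and rest on the same Riemann--Roch count: Corollary \ref{alphaspecial} is exactly the statement that the map $\alpha\mapsto\bigl(a_x^{(N_x)}(0)\bigr)_{x\in\mathcal{S}}$ hits each coordinate axis, i.e.\ the surjectivity you invoke. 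What each buys: the paper's choice of the extremal point gives a direct, one-test-function bound; your normalization handles all cross terms at once through $|w_{x,k}/W_k|\le 1$ without singling out the extremal $\varepsilon$, at the cost of an argument by contradiction. Your treatment of the residual terms (in particular $e^{-s_k/2}/W_k=\varepsilon_{x_*,k}/|B_{x_*}|$ bounded, so the $o_r(1)$ term survives the $k$-limit and is removed by $r\to0^+$, matching the paper's order of limits) is sound, so the proof is complete as written.
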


\begin{proof} 
In view of Corollary \ref{local4} and in order to unify notations, we set: $$D_{x,k}=0\,\,\mbox{ for } x \in \mathcal{S}\,\,\mbox{with} \,\, m_x=1,$$  and let: 
$$ D_k:= \sum_{x \in \mathcal{S}}D_{x,k} \to D_0:= \sum_{x \in \mathcal{S}}(N_x)x\,\,\mbox{ as } \, k \to +\infty,$$ where \\\\
$deg\, D_k = deg \,D_0 =\sum_{x \in \mathcal{S}}N_x \leq \sum_{x \in \mathcal{S}}2(m_x -1)\leq2(m-1)\leq2(\mathfrak{g} -2)< 2(\mathfrak{g}-1).$\\

Thus by Lemma \ref{approximation}, for $\alpha \in Q(D_0),$  we find $\alpha_k \in Q(D_k):\,\, \alpha_k \to \alpha,$ \\
as $ k \to +\infty.$
In particular, $\alpha_k \in Q(D_{x,k}), \,\, \forall x \in \mathcal{S}$ and by combining Lemma \ref{lemmaexpand} and Corollary \ref{local4}, we obtain:
\begin{equation}\label{espansione}
\begin{array}{l}
\mbox{for}\, \alpha \in Q(D_0))\,\,\mbox{ and } \alpha_k \in Q(D_k):\,\, \alpha_k \to \alpha, \,\,\mbox{ there holds:}\\\\
\int_X \beta \wedge \alpha  = \int_X \beta \wedge \alpha_k  +o(1)= \\ \\  e^{\frac{-s_k}{2}} \left(\sum_{x \in \mathcal{S}}\frac{B_x}{\varepsilon_{x,k}}  (a_{x}^{(N_{x})}(0) +o(1)) + o_r(1) \right) +o(1).\end{array} 
\end{equation}

\vskip.1cm
CLAIM:
$$ 0 < \frac{e^{\frac{-s_k}{2}}}{\varepsilon_{x,k}} \leq C, \quad \forall x \in \mathcal{S};$$  
with suitable $C>0.$ 
\vskip.1cm
To establish the above estimate,  we let $x_0 \in \mathcal{S}$ such that (up to subsequence): \begin{equation}\label{epsilon}
0< \varepsilon_{x_0,k}\leq \varepsilon_{x,k},\,\, \forall x\in \mathcal{S}.
\end {equation} 
For the given divisor $D = \sum_{x \in \mathcal{S}} (N_x + 1) x,$ we consider:
$$D(x_0):= D - x_0, \, \mbox{and} \, D_k(x_0):= D_k + \sum_{x\in \mathcal{S} \setminus\{x_0\}} x.$$ 
Clearly,  $D_k(x_0) \to D(x_0)$ as $k \to + \infty,$ and $deg\,D_k(x_0)= deg\,D(x_0)$=\\ $= \sum_{x \in \mathcal{S}} (N_x +1) - 1 \leq 2( \mathfrak{g} -2) < 2( \mathfrak{g} -1).$ \\
Therefore, by Corollary \ref{alphaspecial} we find: $\alpha_0 \in Q(D(x_0))$ but $\alpha_0 \notin Q(D).$ In words, by using for $\alpha_0$ the local expression \eqref{coordalfa}, we have: \begin{equation}\label{allzero}
a_{x_0}^{(N_{x_0})}(0) \neq 0\,\,\mbox{ while }\,\,a_{x}^{(N_{x})}(0)=0,\quad\forall x \in \mathcal{S} \setminus \{x_0\}. \end{equation}
Furthermore as above we choose  
$\alpha_k \in Q(D_k(x_0))$ with $\alpha_k  \to \alpha_0,$ as $ k \to +\infty.$

Since $D_k(x_0) \geq D_k$ then $\alpha_k\in Q(D_k)$ and analogously, since $D(x_0) \geq D_0$ then $\alpha_0 \in Q(D_0).$ So we can use \eqref{espansione} and \eqref{allzero} 
to conclude:
\begin{equation}
\begin{array}{l}
\int_X \beta \wedge \alpha_0  = \int_X \beta \wedge \alpha_k  +o(1)= \\ \\   \frac{e^{\frac{-s_k}{2}}}{\varepsilon_{x_0,k}} \left( B_{x_0}a_{x_0}^{(N_{x_0})}(0) +o(1) + o_r(1) \right) +o(1).\end{array} 
\end{equation}
with $B_{x_0}a_{x_0}^{(N_{x_0})}(0) \neq 0.$ Consequently,  $0 < \frac{e^{\frac{-s_k}{2}}}{\varepsilon_{x_0,k}} \leq C,$ and the claim follows in view of \eqref{epsilon}.\\

At this point we consider, 
$$
\hat{D}_k:=D_k+\sum_{x \in \mathcal{S}} x \to D = \sum_{x \in \mathcal{S}} (N_x + 1) x \quad k\to +\infty, $$ with
\begin{equation}\label{grado}
deg ( \hat{D}_k ) = deg (D) = \sum_{x \in \mathcal{S}} (N_x +1) < 2( \mathfrak{g} -1).
\end{equation}
 
Hence, if we take  $\alpha\in Q(D)$ then $a_{x}^{(N_{x})}(0)=0,\,\, \forall x\in \mathcal{S}.$\\ In addition, in view of \eqref{grado} there exist 
$\alpha_k \in Q(\hat{D}_k) \to \alpha \in Q(D),\,\,\mbox{as} \quad k\to +\infty.$ Since in particular, $\alpha_k \in Q(D_k)\,\,\mbox{ and }\,\, \alpha \in Q(D_0),$ we can apply \eqref{espansione} with  $a_{x}^{(N_{x})}(0)=0,\,\, \forall x\in \mathcal{S}.$\\
In this way, we arrive at the desired conclusion by using the Claim and by passing to the limit, first as $k \to +\infty$ and then as $r\to 0^+.$

In view of \eqref{blowupmass} and \eqref{massatotale}, the number of blow-up points $k=|\mathcal{S}|$  satisfies: $1 \leq k\leq \mathfrak{g} -1 $ and consequently the divisor $D\in Y_{ (k,{\bf{m}},{\bf{N}})}.$ Thus, by Remark \ref{sigma} we have $[\beta]_{\mathbb{P}} \in \tilde{\Sigma}_{(k,\bf{m},\bf{N})},$ as claimed . 

\end{proof}

\begin{remark}\label{remarkdivisori}: The sub-variety $\tilde{\Sigma}_{\mathfrak{g}}$ defined in Corollary \ref{dimproj} admits codimension at least $\mathfrak{g}-1$ in 
$\mathbb{P}(V^*).$ Moreover, in view of Theorem \ref{mainthbis}, we have that: \\ if $[\beta]_{\mathbb{P}} \notin \tilde{\Sigma}_{\mathfrak{g}}$ then we can rule out \underline {blow up} and in this way also Theorem \ref{thm1} is established.
\end{remark}

Consequently, we are just left to prove Theorem \ref{local3}.
\\ \\
THE PROOF OF THEOREM \ref{local3}

\begin{proof} 

By the given assumptions, we are in position to use Proposition \ref{violate}. Thus $\varphi_k^{(1)}$ in \eqref{phikappa} admits a (non empty) blow-up set $\mathcal{S}^{(1)}_0:= \mathcal{S}^{(1)}_{x_0},$ such that $0 \in \mathcal{S}^{(1)}_0$ and the properties specified in $(i),(ii),(iii)$ are satisfied. 
In particular, for $R>1$ sufficiently large, there holds:
\begin{equation}\label{resto0} 
\int_{\Omega_{k,\delta} \setminus B_R} \prod_{j=1}^s |z-p_{j,k}^{(1)}|^{2n_j} h_{1,k}( z)e^{\v_k^{(1)}(z)} \frac{i}{2} dz \wedge d\bar z \to 0, \,\, k\to +\infty.
\end{equation}
Consequently, for a given integer $N_0 \in   \{1,...,n-1 \}$ we find :
\begin{equation}\label{resto}
\int_{R\tau_k^{(1)}\leq |z| \leq \delta} e^{\xi_{k}}|z|^{n+N_0}\frac{i}{2} dz \wedge d\bar z = o(\frac{1}{(\tau_k^{(1)})^{n-N_0}}).
\end{equation}

To simplify notation we let:
$$I= \{1,...,s\}.$$
For $ y \in \mathcal{S}_0^{(1)}$ we define: 
\begin{equation}\label{max1} 
z_{k,y}^{(1)} \in B_{\delta}(y) :\,\, \varphi_k^{(1)}(z_{k,y}^{(1)})  = max_{B_{\delta}(y)}   \varphi_k^{(1)} \to + \infty\,\,  \mbox{ and } z_{k,y}^{(1)} \to y, \,\, \mbox{ as } \ k \to + \infty,
  \end{equation}
where we notice in particular that,
\begin{equation}\label{max0}
z_{k,y=0}^{(1)} = 0, \quad \forall k \in \mathbb{N}.
 \end{equation}

Again, the points in \eqref{p_j} may not be distinct, and so we let, $$Z_1^{(0)}\,\,\mbox{the set of \underline{distinct} points in }\, \{p_{1}^{(1)},...,p_{s}^{(1)}\}$$ and define:
\begin{equation}
I_{y}= \{ j \in I :\,\, p_{j}^{(1)} = y  \},\quad \mbox{ for }\, y\in Z_1^{(0)}
\end{equation}
the sets $I_y $ are mutually disjoint and 
$I = \bigcup_{y\in Z_1^{(0)}} I_y.$\\ 
In this way, we can identify the set (possibly empty) of points in $Z_1^{(0)}$ of "collapsing" type (where different points in $\{ p_{j,k}^{(1)}, \quad j= 1,...,s \}$ coalesce at the limit ) as given by:
\begin{equation}
Z_0^{(1)} = \{y\in Z_1^{(0)} : |I_y|\geq 2 \}.
    \end{equation}
Our most delicate task will be to control the asymptotic behavior of $\varphi_k^{(1)}$ around blow-up points in $\mathcal{S}_0^{(1)}\cap Z_0^{(1)}.$

To this purpose we observe that,\\
if $y\in \mathcal{S}_0^{(1)}\setminus Z_1^{(0)},$ then $ m^{(1)}_y =1$ (see \cite{Li_Shafrir}) and we set $ n^{(1)}_y=0,$\\
if $y\in \mathcal{S}_0^{(1)}\cap Z_1^{(0)},$ then we let: $n^{(1)}_y=\sum_{j\in I_y} n_j.$\\

We define: $$\mathcal{S}_*^{(1)}=\{ y\in \mathcal{S}_0^{(1)}:\,\, n^{(1)}_y+1\leq 2m^{(1)}_y -1 \},$$ 
and by \cite{Li_Shafrir} and \cite{Bartolucci_Tarantello}, we have: $\mathcal{S}_0^{(1)}\setminus Z_0^{(1)} \subseteq \mathcal{S}_*^{(1)}.$ 
We let,
$$ \mathcal{S}_1^{(1)} = \mathcal{S}_*^{(1)} \cap Z_1^{(0)}, \quad 
\mathcal{S}_2^{(1)}=\mathcal{S}_0^{(1)}\setminus  \mathcal{S}_*^{(1)} \subseteq Z_0^{(1)}, $$
with the understanding that some of the above sets may be empty. 

Define,
\begin{equation}\label{I_1} 
I_{(1)}=\cup_{y\in \mathcal{S}_1^{(1)}}I_y \subseteq I,
\end{equation}
and notice that actually, $I_{(1)} \neq I.$ 
\medskip
Indeed, if by contradiction we assume that $I_{(1)}=I,$ then,
\begin{equation}
n=\sum_{j\in\ I} n_j = \sum_{j\in\ I_{(1)}} n_j = \sum_{y\in \mathcal{S}_1^{(1)}} n^{(1)}_y \leq  2 \sum_{y\in \mathcal{S}_1^{(1)}}(m^{(1)}_y -1) \leq 2(m_0 -1),
\end{equation}
 in contradiction to the given assumption \eqref{n+1}.  Therefore,
\begin{equation}{\label{I1}}
I \setminus I_{(1)} \neq \emptyset\,\
\mbox{ and }\,\,
p_{j}^{(1)}\notin \mathcal{S}_0^{(1)}, \quad \forall j\in I\setminus I_{(1)} .
\end{equation}

To illustrate our procedure, we start to consider the case where: \begin{equation}\label{empty1}
 \mathcal{S}_2^{(1)} =\emptyset.    
\end{equation}
that is,
\begin{equation}\label{simpleblowup}
\mathcal{S}_0^{(1)}=\mathcal{S}_*^{(1)}=(\mathcal{S}_0^{(1)}\setminus Z_1^{(0)}) \cup \mathcal{S}_1^{(1)}. 
\end{equation}
When \eqref{empty1} holds, then or all $y\in \mathcal{S}_0^{(1)},$ we define:
\begin{equation} \label{xkappay}
\begin{array}{l}

x_{k,y}^{(1)}:=\, \mbox{(unique) point } \in B(x_0,r)\, \mbox{mapped (in the $z$-coordinates at $x_0$) to }\\
\\
z_k + \tau_k^{(1)}z_{k,y}^{(1)}.\quad
\mbox{Hence}\quad x_{k,y}^{(1)} \to x_0, \quad k \to +\infty.
\end{array}
\end{equation}
Thus in this case, we consider the devisor:
\begin{equation}\label{divisore1}
\begin{array}{l}
\hat{D}_{k,x_0}= \sum_{y\in \mathcal{S}_0^{(1)}} x_{k,y}^{(1)} + \sum_{y\in \mathcal{S}_1^{(1)}}(\sum_{j\in\ I_y} n_jq_{j,k}) = \\
\\= \sum_{y\in \mathcal{S}_0^{(1)}} x_{k,y}^{(1)} + \sum_{j\in\ I_{(1)}} n_jq_{j,k}.
\end{array}
\end{equation}

Therefore, $ deg\,(\hat{D}_{k,x_0}) = \sum_{y\in \mathcal{S}_0^{(1)}}(n_{y}^{(1)}+1) := N_0 + 1, $
with $N_0 \in \mathbb{N}$ satisfying:
$$ 2\leq N_0+1 \leq \sum_{y\in \mathcal{S}_0^{(1)}}(2m_{y}^{(1)}-1) \leq 2m_0 -1, $$
and since by assumption: $2m_0 -1 < n+1,$ we find in particular that, $1\leq N_0 < n.$

Furthermore,
\begin{equation}\label{convdiv}
\hat{D}_{k,x_0} \to (N_0 +1)x_0 := \hat{D}_{x_0}, \quad k \to +\infty.  
\end{equation}

Next we recall that, $0 \in \mathcal{S}_0^{(1)} $ and $z_{k,y=0}^{(1)}=0,$  therefore: $x_{k,y=0}^{(1)} = x_k, \, \forall k.$

Hence, when \eqref{empty1} holds, we set:
\begin{equation}\label{divisore1bis}
D_k=D_{k,x_0}= \hat{D}_{k,x_0} - x_k \to (N_0)x_0:=D.
\end{equation}
Notice in particular that we can apply Lemma \ref{approximation} with the above divisors and so, for given $\alpha \in Q(D)$ there always exist $\alpha_k \in Q(D_k)$:  
$\alpha_k \to \alpha \,\, \mbox{as } k \to + \infty. $
Consequently,  
in z-coordinates at $x_0,$ for 
$\alpha_k=a_{k,x_{0}}(z)dz^2$ and  $\alpha=a_{x_0}(z)dz^2$ ($a_{k,x_0}(z),$ and  $a_{x_0}(z)$ holomorphic in $\Omega_r$) we find:
\begin{equation}\label{alpha_coord_1}
\begin{array}{l}
 a_{k,x_0}(z+z_k)= \prod_{j \in I_{(1)} }(z- p_{j,k})^{n_j} \prod_{y \in \mathcal{S}_{0}^{(1)}\setminus{\{0\}}} (z -\tau_k^{(1)}z_{k,y}^{(1)}) C_k(z +z_k),\\ \\
 a_{x_0}(z)=z^{N_0}C(z),
\end{array}
\end{equation}
where the functions $ C_k \,\,\mbox{ and }\,\, C$ satisfy \eqref{convunif} with $n$ replaced by $N_0.$

At this point, by means of \eqref{resto} and \eqref{alpha_coord_1} together with \eqref{psi}, for $r>0$ sufficiently small and $R>1$ sufficiently large,  we can compute:
\begin{equation}\label{approx1}
\begin{array}{l}
\int_{B(x_0; r)} e^{\xi_k} <\a_k\, \, , \,\widehat \a_k > dA= \\ \\ 4 \int_{\{|z|<\tau_k^{(1)}R \}} e^{\xi_k}\overline{\hat{a}_{k,x_0}}(z+z_k)a_{k,x_0}(z+z_k)e^{-u_X(z+z_k)}\frac{i}{2} dz \wedge d \bar{z} 
+ o(\frac{1}{(\tau_k^{(1)})^{n-N_0}}) \\ \\ + o_r(1) = 4 \int_{\{|z|<\tau_k^{(1)}R \}} \left[ e^{\xi_k}  \prod_{j \in I} (\overline{z-p_{j,k}})^{n_j}\overline{\Psi}_k(z +z_k) \prod_{j \in I_{(1)}} (z-p_{j,k})^{n_j}\right.\\
\\\left. \prod_{y \in \mathcal{S}_{0}^{(1)}\setminus{\{0\}}}(z -\tau_k^{(1)} z_{k,y} ^{(1)}) 
  C_k(z+z_k) e^{-u_X(z+z_k)} \frac{i}{2} dz \wedge d \bar{z} \right] + o(\frac{1}{(\tau_k^{(1)})^{n-N_0}})+ o_r(1) \\
  \\ 
=\frac{4}{{(\tau_k^{(1)})^{n-N_0}}} \left(  \int_{\{|z|<R \}} \left[ e^{\varphi^{(1)}_k} \prod_{j \in I_{(1)}}  |z-p_{j,k}^{(1)}|^{2n_j}\prod_{y \in \mathcal{S}_{0}^{(1)}\setminus{\{0\}}}(z - z_{k,y} ^{(1)}) \right. \right. \\
\\\left. \left. \prod_{j \in I\setminus I_{(1)}} (\overline{z-p_{j,k}^{(1)}})^{n_j}\overline{\Psi}_k(\tau_k^{(1)}z +z_k)C_k(\tau_k^{(1)}z +z_k) e^{-u_X(\tau_k^{(1)}z +z_k)}\frac{i}{2} dz \wedge d \bar{z} \right] +o(1) \right)\\ \\ +  o_r(1).
\end{array}
\end{equation}

 According to $(ii)$ of Proposition \ref{violate}, we have:
$$e^{\varphi^{(1)}_k} \prod_{j \in I_{(1)}}  |z-p_{j,k}^{(1)}|^{2n_j} \rightharpoonup  8 \pi  \sum_{y \in \mathcal{S}_{0}^{(1)}} \frac{m_y^{(1)}}{32 |A_y|^2}  \delta_y\,\,\mbox{weakly in the sense of measure,} $$ 
where,  
\begin{equation}\label{Ay}
A_y= \prod_{j\in I\setminus I_{(1)}}(y-p_{j}^{(1)})^{n_j} \neq 0,\,\,y \in \mathcal{S}_{0}^{(1)},
\end{equation}
(recall \eqref{I1}).
As a consequence, from \eqref{max1} and \eqref{approx1}, we conclude:
\begin{equation}\label{approx1bis}
\begin{array}{l}
\int_{B(x_0; r)}e^{\xi_k} <\a_k\, \, , \,\widehat \a_k > dA=\\ \\=\frac{\pi}{(\tau_k^{(1)})^{n-N_0}}\left(m_{y=0}^{(1)}[\, \prod_{y \in \mathcal{S}_{0}^{(1)}\setminus{\left( 0 \right)}} (-y) ] \frac{\bar{A}_{y=0}}{|A_{y=0}|^2} C(0) \bar{\psi} _{0}(0) + o(1) \right)+o_r(1).

\end{array}
\end{equation}

Thus, in this case \eqref{total asymp behavior2} is established with $\varepsilon_{k,x_0} = (\tau_k^{(1)})^{n-N_0} \to 0, \,\, k\to + \infty,$ and $b_{x_0} = m_{y=0}^{(1)}(\prod_{y \in \mathcal{S}_{0}^{(1)}\setminus{\left( 0 \right)}} (-y)) \frac{\bar{A}_{y=0}}{|A_{y=0}|^2} \in \mathbb{C} \setminus {\{ 0 \}}. $

\medskip

Next we consider the case where,
$$ \mathcal{S}_2^{(1)}=\mathcal{S}_0^{(1)}\setminus  \mathcal{S}_*^{(1)} \neq \emptyset. $$
Recall that, every $y \in \mathcal{S}_2^{(1)} $ must be a blow-up point of "collapsing" type, i.e. $y \in \mathcal{S}_2^{(1)} \cap Z_{(1)}^0.$ 

The goal now is to complete the divisor specified above, by considering:
\begin{equation}\label{divisore2}
\hat{D}_{k,x_0}= \hat{D}_{k,x_0} (\mathcal{S}_*^{(1)}) +  \sum_{y\in \mathcal{S}_2^{(1)}} \hat{D}_{k,x_0} (y),
\end{equation}
where, in analogy to \eqref{divisore1} we set:
\begin{equation}
\hat{D}_{k,x_0} (\mathcal{S}_*^{(1)})
= \sum_{y\in \mathcal{S}_*^{(1)}} x_{k,y}^{(1)} + \sum_{j\in\ I_{(1)}} n_jq_{j,k};
\end{equation}
while, for any $y\in \mathcal{S}_2^{(1)},$ we show next how to select (via a further blow-up procedure) the appropriate subset of indices in $I_y,$ and construct a suitable divisor $ \hat{D}_{k,x_0}(y),$ with the desired properties.
\medskip
To proceed further, let us fix $y_0 \in \mathcal{S}_2^{(1)}$ so that,  
$n_{y_0}^{(1)} +1 > 2 m_{y_0}^{(1)} -1.$
For $\delta > 0$ small, we let
\begin{equation}
\begin{array}{l}
\varphi_{k,y_0}^{(1)}(z) := \varphi_{k}^{(1)}(z+ z_{k,y_0}^{(1)}),\,\, z \in B_\delta,  \\
\\
\tilde{p}_{j,k}^{(1)}:= \tilde{p}_{j,k}^{(1)}(y_0)=p_{j,k}^{(1)} - z_{k,y_0}^{(1)},
\end{array}
\end{equation}

and observe that, \begin{equation}\label{qjk1}  \begin{array}{l} \tilde{p}_{j,k}^{(1)} \to 0, \ \mbox{as} \  k \to + \infty;  \quad \forall j \in I_{y_0} := J_0^{(1)} \\ \\   \tilde{p}_{j,k}^{(1)} \to \tilde{p}_{j}^{(1)} \neq 0, \,\,  \forall j \in I \setminus J_0^{(1)},\,\,(\mbox{provided}\,\, I \setminus J_0^{(1)} \neq \emptyset.) \end{array}  \end{equation} 

Therefore, the function:
$$W_{k,y_{0}}^{(1)}(z) = \prod_{j \in I \setminus J_0^{(1)}} |z- \tilde{p}_{j,k}^{(1)}|^{2n_j} h_{1,k}(z + z_{k,y_0}^{(1)}),$$
is uniformly bounded from above and from below away from zero in $\bar{B}_\delta$ and,
 $$ W_{k,y_{0}}^{(1)}(z) \to W_{y_0}^{(1)}(z):=\prod_{j \in I \setminus J_0^{(1)}} |z- \tilde{p}_{j}^{(1)}|^{2n_j}, \,\,\mbox{uniformly in 
$\bar{B}_\delta,$} $$ 
(in particular: $W_{y_0}^{(1)}(0)\neq 0.$ ) It is understood that, $W_{y_0}^{(1)}=1$ when $I = J_0^{(1)}$. 
By letting,
$$n_{0}^{(1)}=n_{y_0}^{(1)}\quad \mbox{ and }  \,\, m_{0}^{(1)}=m_{y_0}^{(1)}$$
for $\delta >0$ sufficiently small, we have:
\begin{equation}\label{P2bis} \left\{ \begin{array}{l}- \Delta \varphi_{k,y_0}^{(1)} =32e^{\varphi_{k,y_0}^{(1)}} (\prod_{j \in J_0^{(1)}} |z- \tilde{p}_{j,k}^{(1)}|^{2n_j})W_{k,y_{0}}^{(1)}(z) - g_{1,k}(z+z_{k,y_0}^{(1)})   \ \mbox{in} \ B_{\delta}  \\ \\
 \varphi_{k,y_0}^{(1)}(0) = max_{B_{\delta}(0)}  \,\varphi_{k,y_0}^{(1)}  \to + \infty, \,\, \mbox{as} \ k \to +\infty,\\ \\  
 \int_{B_{\delta}} e^{\varphi_{k,y_0}^{(1)}} (\prod_{j \in J_0^{(1)}} |z- \tilde{p}_{j,k}^{(1)}|^{2n_j})W_{k,y_{0}}^{(1)}(z) < C.
 \end{array} \right.
 \end{equation} 
\\
In addition, 
\begin{equation}\label{convdebole1}
\begin{array}{l}
e^{\varphi_{k,y_0}^{(1)}} (\prod_{j \in J_0^{(1)}}|z- \tilde{p}_{j,k}^{(1)}|^{2n_j}) \rightharpoonup 
\frac {8\pi m_{0}^{(1)}}{32W_{y_0}^{(1)}(0)} \delta_0 \\ \\ \ \mbox{weakly in the sense of measure in} \,\, B_{\delta}, \\ \\
n^{(1)}_{0}  = \sum_{j \in J_0^{(1)}} n_j > 2(m_{0}^{(1)} - 1).
 \end{array}
\end{equation}
Clearly, problem \eqref{P2bis} for the function $\varphi_{k,y_0}^{(1)}$  is completely analogous to that of $\xi_k$ we started with,  however we have the following improvement:
\begin{lemma}\label{s,m}
Either $|J_0^{(1)}| := s_0 < s$ or $|J_0^{(1)}| = s$ and $2 \leq m_{0}^{(1)}  < m_0,$ where $|J_0^{(1)}|$ is the cardinality of $J_0^{(1)}.$ 
\end{lemma}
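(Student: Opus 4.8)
The plan is to prove Lemma \ref{s,m} by splitting on whether \emph{all} the collapsing zeros of $\hat a_k$ accumulate at $y_0$ or not, i.e. on whether $s_0=|J_0^{(1)}|=|I_{y_0}|$ equals $s$ or is strictly smaller. Since $y_0\in\mathcal{S}_2^{(1)}\subseteq Z_0^{(1)}$ is of collapsing type we have $|I_{y_0}|\geq 2$, so $2\leq s_0\leq s$. If $s_0<s$ the first alternative holds and there is nothing further to prove, so the entire content lies in the case $s_0=s$, where I must produce the two-sided bound $2\leq m_0^{(1)}<m_0$. First I would record the immediate structural consequences of $s_0=s$: then $J_0^{(1)}=I$, every rescaled zero $p_{j,k}^{(1)}$ converges to $y_0$, and $n_{y_0}^{(1)}=\sum_{j\in I}n_j=n$. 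Moreover, because $\tau_k^{(1)}=|p_{s,k}|$ was chosen as the largest modulus, $|p_{s,k}^{(1)}|=1$, hence $|y_0|=1$; in particular $y_0\neq 0$.

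For the upper bound $m_0^{(1)}<m_0$ I would invoke the mass distribution established in Proposition \ref{violate}. By part (ii) the origin is a blow-up point of $\varphi_k^{(1)}$, so $0\in\mathcal{S}_0^{(1)}$ with mass $m_{y=0}^{(1)}\geq 1$, and by part (iii) the blow-up occurs with the concentration property, whence $\sum_{y\in\mathcal{S}_0^{(1)}}m_y^{(1)}=m_0$. Since $0$ and $y_0$ are two \emph{distinct} elements of $\mathcal{S}_0^{(1)}$, discarding the contribution of the origin gives $m_0^{(1)}=m_{y_0}^{(1)}\leq m_0-m_{y=0}^{(1)}\leq m_0-1<m_0$, as required.

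The lower bound $m_0^{(1)}\geq 2$ is the delicate point, and I would obtain it by re-running, now centered at $y_0$, the single-bubble exclusion already carried out for $x_0$ in Proposition \ref{violate}(ii). Suppose, for contradiction, that $m_{y_0}^{(1)}=1$. Rescaling $\varphi_{k,y_0}^{(1)}$ at the bubble scale and passing to the limit produces a solution $\Phi$ on $\mathbb{R}^2$ whose total mass $\lambda_\Phi$ obeys the Pohozaev identity of type \eqref{Lin-Yang} with $(m_0,n)$ replaced by $(m_{y_0}^{(1)},n_{y_0}^{(1)})$. Since $y_0\in\mathcal{S}_2^{(1)}$ means precisely $2m_{y_0}^{(1)}\leq n_{y_0}^{(1)}+1$, the alternative branch $\sigma+\lambda=8\pi(n_{y_0}^{(1)}+1)$ is excluded, as it would force $\lambda_\Phi>\sigma$; hence $\lambda_\Phi=8\pi m_{y_0}^{(1)}=8\pi$ and $\Phi(z)=4\log(1/|z|)+O(1)$ at infinity. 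The integrability of the weighted nonlinearity, of order $|z|^{2n_{y_0}^{(1)}}e^{\Phi}$ near infinity, then forces $2m_{y_0}^{(1)}>n_{y_0}^{(1)}+1$, i.e. $n_{y_0}^{(1)}<1$. This contradicts $n_{y_0}^{(1)}=n\geq 2$ (which holds in the case $s_0=s$, and indeed $n_{y_0}^{(1)}\geq 2$ already because $y_0$ is collapsing). Therefore $m_0^{(1)}\geq 2$, completing the case $s_0=s$.

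The main obstacle is exactly this last step: establishing $m_0^{(1)}\geq 2$ requires reproducing the full blow-up and Pohozaev machinery of Proposition \ref{violate} at the secondary point $y_0$, including the identification $\lambda_\Phi=8\pi m_{y_0}^{(1)}$ through Theorem 2 of \cite{Chen_Li_2} and the identity of \cite{Lee_Lin_Yang_Zhang}, \cite{Tar_1}. By contrast, the upper bound and the dichotomy on $s_0$ are pure bookkeeping on cardinalities together with the mass-conservation identity $\sum_{y\in\mathcal{S}_0^{(1)}}m_y^{(1)}=m_0$.
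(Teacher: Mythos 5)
Your first two steps are correct and coincide with the paper's: the dichotomy on $s_0$ is immediate, and the upper bound $m_0^{(1)}\leq m_0-1$ follows, exactly as in the paper, from mass conservation in Proposition \ref{violate}(iii) together with the fact that $0$ and $y_0$ are distinct points of the blow-up set (since $|y_0|=1$). The genuine gap is in the lower bound $m_0^{(1)}\geq 2$, and the route you chose cannot be repaired: no purely local argument at $y_0$ can exclude $m_{y_0}^{(1)}=1$, because a mass-one blow-up at a zero of collapsing type is a perfectly admissible local behavior --- it is precisely the case treated in Proposition \ref{local1} and Remark \ref{remm1}, with profile \eqref{profile}--\eqref{profilo}. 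The step of yours that fails is ``rescaling at the bubble scale \dots produces a solution $\Phi$ on $\mathbb{R}^2$'' whose nonlinearity carries the weight $|z|^{2n_{y_0}^{(1)}}$ at infinity. In Proposition \ref{violate}(ii) this is legitimate only because the contradiction hypothesis there, $\varphi_k^{(1)}(0)\leq C$, is \emph{equivalent} to the scale comparison $\tau_k^{(1)}/\varepsilon_k\leq C$, which keeps the rescaled zeros at bounded positions, so that the limiting coefficient really vanishes to total order $2n$ at finite points. Your hypothesis $m_{y_0}^{(1)}=1$ provides no comparison between the concentration scale at $y_0$ and the collapsing scale $\tau_k^{(2)}(y_0)$ of the zeros. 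In the actual mass-one collapsing scenario the bubble sits at a scale much smaller than $\tau_k^{(2)}(y_0)$ (this is what $W_k(0)e^{\xi_k(z_k)}\to+\infty$ in \eqref{profilo} encodes): after rescaling at the bubble scale the zeros escape to infinity, the limit $\Phi$ solves the Liouville equation with \emph{constant} coefficient and mass $8\pi$, and no integrability contradiction arises. If instead you rescale at $\tau_k^{(2)}(y_0)$, the rescaled sequence still blows up, so there is no entire limit $\Phi$ at all, and the Pohozaev identity merely returns the consistent branch $\lambda=\sigma=8\pi$. In other words, your Pohozaev-plus-integrability computation, run correctly, proves only that the bubble scale and the zero scale must separate when $m_{y_0}^{(1)}=1$; it does not contradict $m_{y_0}^{(1)}=1$.

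What actually rules out $m_{y_0}^{(1)}=1$ when $s_0=s$ is a two-bubble, hence non-local, argument, and this is how the paper proceeds: if $m_{y_0}^{(1)}=1$, then \emph{every} point of $\mathcal{S}_{x_0}^{(1)}$ carries mass one, so around each of them --- in particular around $0$ and around $y_0$ --- the pointwise profile of Corollary 3.1 of \cite{Tar_1} (the analogue of \eqref{profile}) applies and yields comparable rates away from the blow-up set, i.e. the quantity $\bigl[\prod_{j\in I}|\cdot-p_{j,k}^{(1)}|^{2n_j}h_{1,k}\bigr]^2e^{\varphi_k^{(1)}}$ evaluated at the local maximum points near $0$ and near $y_0$ stays of the same order. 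Since $\varphi_k^{(1)}$ attains its \emph{global} maximum at $0$, this forces $\prod_{j\in I}|p_{j,k}^{(1)}|^{2n_j}\leq C\prod_{j\in I}|z_{k,y_0}^{(1)}-p_{j,k}^{(1)}|^{2n_j}$; the right-hand side tends to $0$ precisely because $s_0=s$ sends all the zeros to $y_0$, while the left-hand side tends to $|y_0|^{2n}=1$ --- a contradiction. Note that both the global maximality at $0$ and the hypothesis $s_0=s$ enter essentially in this step; neither appears in your argument, which is the symptom of the gap.
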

\begin{proof}
If $|J_0^{(1)}| = s,$ then $J_0^{(1)}=I$ and so $p_{j}^{(1)}=y_0, \,\, \forall j \in I.$ Hence 
$\mathcal{S}_2^{(1)}=\{y_0\}$ and since $|p_{s}^{(1)}|=1,$ we also know that $|y_0| =1.$ Hence, $ 0 \in\mathcal{S}_{0}^{(1)} \setminus \{y_0\} = \mathcal{S}_0^{(1)}\setminus Z_{1}^{(0)}\neq \emptyset$ and $ \forall y\in\mathcal{S}_{0}^{(1)} \setminus \{y_0\}$ we have: $m_{y}^{(1)}=1.$

As a consequence we find, $m_0=m_{0}^{(1)} +|\mathcal{S}_{0}^{(1)} \setminus \{y_0\}|\geq m_{0}^{(1)}+1,$ 
and necessarily, $1 \leq m_{0}^{(1)} \leq m_{0}-1,$ as claimed.
Next we need to exclude that, $m_{0}^{(1)} = 1.$ Indeed if this was the case, then we would have:  $m_{y}^{(1)}=1, \,\,\forall y\in\mathcal{S}_{0}^{(1)}.$ 
Therefore, around any $y \in\mathcal{S}_{0}^{(1)} $  we could use the pointwise blow-up profile description for $\varphi_{k,y_0}^{(1)}$ (analogous to \eqref{profile}) as given in Corollary 3.1 of \cite{Tar_1}. Consequently, we would find comparable rates on the behavior of $\varphi_{k,y_0}^{(1)}$ away from the blow up set. In particular we could deduce:      
$$ \frac{ [\prod_{j \in I} |z_{k,y_0}^{(1)}- p_{k,j}^{(1)}|^{2n_j}h_{1,k}(z_{k,y_0}^{(1)})]^2 e^{\varphi_k^{(1)}(z_{k,y_0}^{(1)})}}{ [(\prod_{j \in I} | p_{k,j}^{(1)}|^{2n_j}h_{1,k}(0)]^2 e^{\varphi_k^{(1)}(0)}} = O(1), \mbox{ \ as \ } k \to + \infty.$$ 
On the other hand,  $\varphi_k^{(1)}(0) = max_{\Omega_{k,\delta}}  \varphi_k^{(1)},$ and from the estimates above we derive:  
$$ \prod_{j \in I} | p_{k,j}^{(1)}|^{2n_j}
 \leq C  \prod_{j \in I} |z_{k,y_0}^{(1)}- p_{k,j}^{(1)}|^{2n_j} \to 0,  \ \mbox{as} \ k \to + \infty$$ which is impossible, since $  \prod_{j \in I} | p_{k,j}^{(1)}|^{2n_j} \to |y_0|^{2n} = 1, \,\, k \to + \infty.$ 
\end{proof}

Since the analogous of Proposition \ref{violate}
applies to $\varphi_{k,y_0}^{(1)},$ we can iterate the blow-up procedure illustrated above. For this purpose,
let 
$$ \tau_k^{(2)} :=\tau_k^{(2)}(y_0)= max_{j \in J_0^{(1)}} |\tilde{p}_{j,k}^{(1)}| \to 0, \,\, \mbox{ as } k \to + \infty, $$
and define:
\begin{equation}\label{varphik2} \begin{array}{l} \varphi_{k,y_0}^{(2)}(z) := \varphi_{k,y_0}^{(1)}(\tau_{k}^{(2)}z) +2(n^{(1)}_{0} +1)\log(\tau_k^{(2)}),  \quad  z \in \Omega_{k,\delta}^{(1)}:= \{ z \in \mathbb{C} : |z| < \frac{\delta}{\tau_k^{(2)}} \}\\\\  
p_{k,j}^{(2)}:= \frac{\tilde{p}_{j,k}^{(1)}}{\tau_k^{(2)}}, \,\, \mbox{(so that, } \,|p_{k,j}^{(2)}| \leq 1),\quad p_{k,j}^{(2)} \to p_{j}^{(2)}, \quad \forall j\in J_0^{(1)}; \end{array} \end{equation}
(possibly along a subsequence) with suitable points $p_{j}^{(2)}, \,\, j\in J_0^{(1)}.$
Using Proposition \ref{violate} for  $\varphi_{k,y_0}^{(1)},$ we obtain:

\begin{equation}\label{varphik2bis} \varphi_{k,y_0}^{(2)}(0) = max_{\Omega_{k,\delta}^{(1)}} \varphi_{k,y_0}^{(2)} \to + \infty \ \mbox{as} \ k \to + \infty, \end{equation} namely, $\varphi_{k,y_0}^{(2)}$ blows up. Let $\mathcal{S}_0^{(2)}(y_0)$ denote the blow-up set of  
$\varphi_{k,y_0}^{(2)},$ so that, $0 \in \mathcal{S}_0^{(2)}(y_0).$ For $w \in \mathcal{S}_0^{(2)}(y_0),$ we define: 
$$\begin{array}{l}    m_w^{(2)}:= m_w^{(2)}(y_0) = \mbox {blow-up mass of $\varphi_{k,y_0}^{(2)}$ at }\,\, w. \quad \mbox{Hence:}\\ \\  
e^{\varphi_{k,y_0}^{(2)}} (\prod_{j \in J_0^{(1)}} |z- {p}_{j,k}^{(2)}|^{2n_j})\ \rightharpoonup \frac{8\pi}{32W_{y_0}^{(1)}(0)}\sum_{w \in \mathcal{S}_0^{(2)}(y_0)}m_w^{(2)}\delta_{w},\\ \\
\mbox{weakly in the sense of measure,}\\ \\\sum_{w \in \mathcal{S}_0^{(2)}(y_0)} m_w^{(2)} = m_{0}^{(1)}.\end{array}$$
\\
We proceed exactly as above, and for $ w \in \mathcal{S}_0^{(2)}(y_0)$ we set, 
\begin{equation}\label{max1bis}
\begin{array}{l}
z_{k,w}^{(2)}:=z_{k,w}^{(2)}(y_0) \in B_{\delta}(w) :\,\, \varphi_k^{(2)}(z_{k,w}^{(2)})  = max_{B_{\delta}(w)}   \varphi_k^{(2)} \to + \infty,\,\, \\ \\  z_{k,w}^{(2)} \to w, \,\, \mbox{ as } \ k \to + \infty, \quad\mbox{moreover}\,\, z_{k,w=0}^{(2)} = 0, \quad \forall k \in \mathbb{N}.
  
  \end{array} \end{equation}
Again set,  $$Z_2^{(0)}\,\,\mbox{the set of \underline{distinct} points in} \, \{p_{j}^{(2)},\quad \forall j \in J_0^{(1)}\}$$ and consider the subset: 
\begin{equation}
J_{w}= \{ j \in J_0^{(1)} :\,\, p_{j}^{(2)} = w  \},\,\, \mbox{ for }\, w\in Z_2^{(0)} \,
\end{equation} so that, $J_{w}$ are mutually disjoint and
$J_0^{(1)} = \bigcup_{w\in Z_2^{(0)}} J_w.$\\ 

Consequently, the set (possibly empty) of points in $Z_2^{(0)}$ of "collapsing" type, is given by:
\begin{equation}
Z_0^{(2)} = \{w\in Z_2^{(0)} : |J_w|\geq 2 \}.
    \end{equation}
As before we observe that,
if $w\in \mathcal{S}_0^{(2)}(y_0)\setminus Z_2^{(0)}$ then $ m^{(2)}_w =1,$  and in this case conveniently we set: $ n^{(2)}_w=0.$\\
While, if $w\in \mathcal{S}_0^{(2)}(y_0)\cap Z_2^{(0)},$ then we set: $n^{(2)}_w=\sum_{j\in J_w} n_j.$\\

Thus we define, $$\mathcal{S}_*^{(2)}(y_0)=\{ w\in \mathcal{S}_0^{(2)}(y_0):\,\, n^{(2)}_w+1\leq 2m^{(2)}_w -1 \},$$ 
so that, $\mathcal{S}_0^{(2)}(y_0)\setminus Z_0^{(2)} \subseteq \mathcal{S}_*^{(2)}(y_0);$ 
and we consider the (possibly empty) sets,
$$ \mathcal{S}_1^{(2)}(y_0) = \mathcal{S}_*^{(2)}(y_0) \cap Z_2^{(0)}, \quad 
\mathcal{S}_2^{(2)}(y_0)=\mathcal{S}_0^{(2)}(y_0)\setminus  \mathcal{S}_{*}^{(2)}(y_0)\subseteq Z_0^{(2)}. $$
Let,
$$ J_{(2)}(y_0)=\cup_{w\in \mathcal{S}_1^{(2)}(y_0)}J_w \subseteq J_0^{(1)},$$
and as above, we see that:
\begin{equation}{\label{J1}}
J_{0}^{(1)}(y_0) \setminus J_{(2)}(y_0) \neq \emptyset\,\
\mbox{ and }\,\,
p_{j}^{(2)}\notin \mathcal{S}_0^{(2)}(y_0) \quad \forall j\in J_{0}^{(1)}(y_0) \setminus J_{(2)}(y_0).
\end{equation}
Consequently,
\begin{equation}\label{convweek2}
\begin{array}{l}
e^{\varphi_{k,y_0}^{(2)}} (\prod_{j \in J_{(2)}(y_0)} |z- {p}_{j,k}^{(2)}|^{2n_j})\ \rightharpoonup \frac{\pi}{4}\sum_{w \in \mathcal{S}_0^{(2)}(y_0)}M_w^{(2)}(y_0)\delta_{w},\\
\\
\mbox{where, }\\
M_w^{(2)}(y_0) = \frac{8\pi}{32W_{y_0}^{(1)}(0)|A_{w,y_{0}}^{(2)}|^2}\,\, \mbox{with } A_{w, y_{0}}^{(2)}= \prod_{j \in (J_{0}^{(1)}(y_0) \setminus J_{(2)}(y_0))}(w-p_j^{(2)})^{n_j} \neq 0.
\end{array}
\end{equation}

Again, we consider first the case: \begin{equation}\label{empty2}
 \mathcal{S}_2^{(2)}(y_0) =\emptyset,    
\end{equation}
namely: $\mathcal{S}_0^{(2)}(y_0)=\mathcal{S}_*^{(2)}(y_0)=(\mathcal{S}_0^{(2)}(y_0)\setminus Z_2^{(0)}) \cup \mathcal{S}_1^{(2)}(y_0), 
 $ and therefore: \\ $n_w^{(2)}+1 \leq 2m_w^{(2)}-1,\,\, \forall w\in \mathcal{S}_0^{(2)}(y_0). $\\
In this case, for all $w\in \mathcal{S}_0^{(2)}(y_0),$  we define the point:
\begin{equation} \label{xkappa2}
\begin{array}{l}
x_{k,w}^{(2)} = x_{k,w}^{(2)}(y_0)\in B(x_0,r)\,\, \mbox{mapped (in the $z$-coordinates at $x_0$) to the point:}\\
\\ 
z_k + \tau_k^{(1)}(z_{k,y_0}^{(1)} + \tau_k^{(2)}z_{k,w}^{(2)}):=\zeta_{k,w},\quad
\mbox{and so}\quad x_{k,w}^{(2)} \to x_0, \quad k \to +\infty.
\end{array}
\end{equation}
Thus, when \eqref{empty1} holds, then (in analogy to the previous step) we take the devisor:
\begin{equation}\label{divisory_0}
\begin{array}{l}
\hat{D}_{k,x_0}(y_0)=\sum_{w\in \mathcal{S}_0^{(2)}(y_0)} x_{k,w}^{(2)} + \sum_{w\in \mathcal{S}_1^{(2)}(y_0)}(\sum_{j\in\ J_y} n_jq_{j,k})=\\ \\=\sum_{w\in \mathcal{S}_0^{(2)}(y_0)}x_{k,w}^{(2)}+\sum_{j\in J_{(2)}(y_0)}n_j q_{j,k},
\end{array}
\end{equation}
and therefore,
\begin{equation}\label{degreedivisory_0}
\begin{array}{l}
deg (\hat{D}_{k,x_0}(y_0)) = \sum_{w\in \mathcal{S}_0^{(2)}(y_0)}(n_w^{(2)}+1)=: N_{y_0}^{(1)} +1,\\
\\
\mbox{with: }\,2\leq N_{y_0}^{(1)} +1\leq \sum_{w\in \mathcal{S}_0^{(2)}(y_0)}(2m_w^{(2)}-1) \leq 2m_{0}^{(1)}-1 \,\, \mbox{and }\, 1\leq N_{y_0}^{(1)} < n_{0}^{(1)}; \\
\\
\hat{D}_{k,x_0}(y_0) \to \hat{D}_{x_0}(y_0):=(N_{y_0}^{(1)} +1)x_0, \quad k \to +\infty.
\end{array}
\end{equation}
At this point, if we assume:
\begin{equation}\label{vuoto}
 \mathcal{S}_2^{(2)}(y) =\emptyset, \,\,  \forall y\in \mathcal{S}_2^{(1)}\end{equation} 
 then it is clear to take as divisor:  
\begin{equation}\label{divisore2bis}
\begin{array}{l}
\hat{D}_{k,x_0}= \sum_{y\in \mathcal{S}_*^{(1)}} x_{k,y}^{(1)} + \sum_{j\in\ I_{(1)}} n_jq_{j,k} + \\\
\\+ \sum_{y\in \mathcal{S}_2^{(1)}} \left( \sum_{w\in \mathcal{S}_0^{(2)}(y)}x_{k,w}^{(2)}(y) + \sum_{j\in J_{(2)}(y)}n_j q_{j,k} \right). 
\end{array}
\end{equation}
Moreover we recall that, $\forall y\in \mathcal{S}_2^{(1)}$ we have: $z_{k,w=0}^{(2)}(y)=0$ and so: $x_{k,w=0}^{(2)}(y)=x_{k,y}^{(1)}.$ Therefore, by setting,
$$ I_{0}=(I_{(1)})\cup_{y\in \mathcal{S}_2^{(1)}}J_{(2)}(y) \subsetneq I,$$
(recall \eqref{J1}) we obtain,
\begin{equation}\label{divisore1bisbis}
\hat{D}_{k,x_0}=\sum_{j\in I_{0}} n_jq_{j,k} + \sum_{y\in \mathcal{S}_{0}^{(1)}} x_{k,y}^{(1)}+\sum_{y\in \mathcal{S}_2^{(1)}} \left( \sum_{w\in \mathcal{S}_0^{(2)}(y) \setminus{\{0\}}}x_{k,w}^{(2)}(y)\right).
\end{equation}
Thus, we find:
\begin{equation}\label{degreedivisory1}
\begin{array}{l}
deg (\hat{D}_{k,x_0}) = \sum_{y\in \mathcal{S}_*^{(1)}}(n_y^{(1)}+1)+ \sum_{y\in \mathcal{S}_2^{(1)}} (N_{y}^{(1)} +1)=: N_{0}+1,\\
\\ 2\leq N_{0}+1 \leq \sum_{y\in \mathcal{S}_0^{(1)}} (2m_y^{(1)}-1) \leq 2m_{0}-1, \,\, \mbox{and so:}\,\, 1\leq N_0 < n;\\
\\
\hat{D}_{k,x_0} \to \hat{D}_{x_0}:=(N_{0} +1)x_0, \quad k \to +\infty.
\end{array}
\end{equation}
Let us fix $y_*\in \mathcal{S}_2^{(1)},$ such that (possibly along a subsequence) there holds:
\begin{equation}\label{minimo}
(\tau_k^{(2)}(y_*))^{n_{y_*}^{(1)}-N_{y_*}^{(1)}} \leq (\tau_k^{(2)}(y))^{n_{y}^{(1)}-N_{y}^{(1)}}, \quad \forall y \in \mathcal{S}_2^{(1)}.
\end{equation}
We are going to show that the appropriate "approximation" devisor in this case is given by:
$$ D_k = \hat{D}_{k,x_0} - x_{k,y_*}^{(1)},$$
Namely, 
\begin{equation}\label{divisore3}
\begin{array}{l}
D_k=\sum_{j\in I_{0}} n_jq_{j,k} + \sum_{y\in \mathcal{S}_{0}^{(1)}\setminus{\{y_*\}}} x_{k,y}^{(1)}+ \sum_{y\in \mathcal{S}_2^{(1)}} \left(\sum_{w\in \mathcal{S}_0^{(2)}(y)\setminus{\{0\}}}x_{k,w}^{(2)}(y) \right),\\\\
deg (D_k) = N_0, \quad D_k \to D:= N_{0}x_{0}, \,\, k\to +\infty.
\end{array}
\end{equation}
Hence, if we take $\alpha \in Q({D}),\,\, \alpha_k \in Q(D_{k}): \,\, \alpha_k \to \alpha $ 
(given by Lemma \ref{approximation})
then, in local $z-$coordinates at $x_0,$ we have:\\ \\ 
$\alpha_k = a_{k,x_0}dz^2, \quad \alpha = a_{x_0}dz^2  $ with $ a_{x_0}(z) = z^{N_{0}} C(z), $ while $a_{k,x_0}$ takes the following form:
\begin{equation}\label{QDk}
\begin{array}{l}
 a_{k,x_0}(z+z_k) = \left[\left( \prod_{j \in I_{0} }(z- p_{j,k})^{n_j} \right) \prod_{y \in \mathcal{S}_{0}^{(1)}\setminus{\{y_*\}}}(z-  \tau_k^{(1)}z_{k,y}^{(1)})\right.\\\\ \left.  \prod_{ y \in \mathcal{S}_{2}^{(1)}} \prod_{w \in \mathcal{S}_{0}^{(2)}(y)\setminus{\{0\}}} \left(z-  \tau_k^{(1)}(z_{k,y}^{(1)} + \tau_{k}^{(2)}(y)z_{k,w}^{(2)}(y)) \right)  C_k(z+z_k) \right] 
 \end{array} \end{equation} 
 with $C_k \ \, \mbox{and} \,\, C \,\,\mbox{holomorphic and }\, C_k \to C \,\, \mbox{in }\,\,B_\delta, \,\, k \to +\infty.$\\
Thus, by recalling \eqref{psi}, we compute:
\begin{equation}\label{approx2}
\begin{array}{l}
\int_{B(x_0; r)} e^{\xi_k} <\a_k\, \, , \,\widehat \a_k > dA=\\ \\ 4 \int_{\{|z|<\tau_k^{(1)}R \}} e^{\xi_k}\overline{\hat{a}_{k,x_0}}(z+z_k)a_{k,x_0}(z+z_k)e^{-2u_X(z+z_k)}\frac{i}{2} dz \wedge d \bar{z}\\ 
\\+ o(\frac{1}{(\tau_k^{(1)})^{n-N_0}}) + o_r(1) = 4 \int_{\{|z|<\tau_k^{(1)}R \}} e^{\xi_k} \left[\prod_{j \in I} (\overline{z-p_{j,k}})^{n_j} \prod_{j \in I_{0}} (z-p_{j,k})^{n_j}\right.\\
\\\left. \prod_{y \in\mathcal{S}_{0}^{(1)}\setminus{\{y_*\}}}(z -\tau_k^{(1)} z_{k,y} ^{(1)})\prod_{ y \in \mathcal{S}_{2}^{(1)}} \prod_{w \in \mathcal{S}_{0}^{(2)}(y)\setminus{\{0\}}} \left(z-  \tau_k^{(1)}(z_{k,y}^{(1)} + \tau_{k}^{(2)}(y) z_{k,w}^{(2)}(y))\right) \right.\\\\ \left.   \overline{\Psi}_k(z +z_k)C_k(z+z_k) e^{-2u_X(z+z_k)} \frac{i}{2} dz \wedge d \bar{z} \right] + o(\frac{1}{(\tau_k^{(1)})^{n-N_0}}) + o_r(1) =\\
  \\
\frac{4}{{(\tau_k^{(1)})^{n-N_0}}}  \sum_{ y \in \mathcal{S}_{0}^{(1)}}\left \{  \int_{B_{\delta}(z_{k,y} ^{(1)})} e^{\varphi^{(1)}_k} \left[ \prod_{j \in I_{0}}  |z-p_{j,k}^{(1)}|^{2n_j}\prod_{y \in\mathcal{S}_{0}^{(1)}\setminus{\{y_*\}}}(z - z_{k,y} ^{(1)}) \right. \right. \\
\\ \left. \left.\prod_{j \in I\setminus I_{0}} (\overline{z-p_{j,k}^{(1)}})^{n_j} \prod_{ y \in \mathcal{S}_{2}^{(1)}} \prod_{w \in \mathcal{S}_{0}^{(2)}(y)\setminus{\{0\}}} \left(z-  (z_{k,y}^{(1)} + \tau_{k}^{(2)}(y) z_{k,w}^{(2)}(y))\right)\right. \right. \\ \\ \left.\overline{\Psi}_k(\tau_k^{(1)}z +z_k)C_k(\tau_k^{(1)}z +z_k)  \left. e^{-2u_X(\tau_k^{(1)}z +z_k)}\frac{i}{2} dz \wedge d \bar{z} \right] +o(1) \right \} +  o_r(1).
\end{array}
\end{equation}

Now, we analyze each of the integral terms above, and start by taking $y \in \mathcal{S}_{*}^{(1)}.$ We find:\\
\begin{equation}\label{intIa} \begin{array}{l}
\int_{B_{\delta}(z_{k,y} ^{(1)})}e^{\varphi^{(1)}_k}\left [ ... \right] = \int_{B_{\delta}}e^{\varphi^{(1)}_{k,y}}\prod_{j \in I_{y}}  |z-\tilde{p}_{j,k}^{(1)}|^{2n_j}\left[z H_{k,y}(z)\right.\\\\ \left.\overline{\Psi}_k(\tau_k^{(1)}(z +z_{k,y}^{(1)}) +z_k)C_k(\tau_k^{(1)}(z +z_{k,y}^{(1)}) +z_k) e^{-2u_X(\tau_k^{(1)}(z +z_{k,y}^{(1)})+z_k)}\frac{i}{2} dz \wedge d \bar{z} \right]\\ \\ \\=C(0)\overline{\Psi}_{0}(0)\left( \int_{B_{\delta}}e^{\varphi^{(1)}_{k,y}}\prod_{j \in I_{y}}  |z-\tilde{p}_{j,k}^{(1)}|^{2n_j}z H_{k,y}(z)\frac{i}{2} dz \wedge d \bar{z} +o(1)\right)\end{array} \end{equation}\\  with suitable sequence of functions $H_{k,y}(z)$ satisfying: $H_{k,y} \to H_{y} \,\,\mbox{as }\, k \to +\infty,$ uniformly in $B_{\delta}$ and $H_{y}(0) \neq 0.$ 
Moreover, by applying \eqref{convdebole1} to the blow- up point $y \in \mathcal{S}_{*}^{(1)}$, we conclude that,
\begin{equation}\label{intIb}
\int_{B_{\delta}}e^{\varphi^{(1)}_{k,y}}\prod_{j \in I_{y}}  |z-\tilde{p}_{j,k}^{(1)}|^{2n_j}z H_{k,y}(z)\frac{i}{2} dz \wedge d \bar{z} = o(1), \quad k \to +\infty.
\end{equation}

Next, we consider $y \in \mathcal{S}_{2}^{(1)}\setminus{\{y_*\}},$ 
and as above we find:\\
\begin{equation}
\begin{array}{l}
\int_{B_{\delta}(z_{k,y} ^{(1)})}e^{\varphi^{(1)}_k}\left [ ... \right] = \\ \\ = \int_{B_{\delta}}e^{\varphi^{(1)}_{k,y}}\prod_{j \in J_{(2)}(y)} |z-\tilde{p}_{j,k}^{(1)}(y)|^{2n_j}\left[ z\prod_{w \in \mathcal{S}_{0}^{(2)}(y)\setminus{\{0\}}} \left( z- \tau_{k}^{(2)}(y) z_{k,w}^{(2)}(y)\right) \right. \\ \\  H_{k,y}(z)\overline{\Psi}_k(\tau_k^{(1)}(z +z_{k,y}^{(1)})  +z_k)C_k(\tau_k^{(1)}(z +z_{k,y}^{(1)}) +z_k)\\ \\ e^{-2u_X(\tau_k^{(1)}(z +z_{k,y}^{(1)}) +z_k)}  \left.\frac{i}{2} dz \wedge d \bar{z},\right] 
\end{array}
\end{equation}
where again, $H_{k,y} \to H_{y} \,\,\mbox{as }\, k \to +\infty,$ uniformly in $B_{\delta}$ and $H_{y}(0) \neq 0.$  
Consequently, after a further scaling , we have:
\begin{equation}\label{intII}
\begin{array}{l}
\int_{B_{\delta}(z_{k,y} ^{(1)})}e^{\varphi^{(1)}_k}\left [ ... \right] = \frac{1}{(\tau_{k}^{(2)}(y))^{n_y^{(1)} - N_y^{(1)}-1} }\int_{\Omega_{k,\delta} ^{(1)}}\left[e^{\varphi^{(2)}_{k,y}}\prod_{j \in J_{(2)}(y)} |z-p_{j,k}^{(2)}(y)|^{2n_j} \right.\\ \\ \left. z\prod_{w \in \mathcal{S}_{0}^{(2)}(y)\setminus{\{0\}}} \left( z-  z_{k,w}^{(2)}(y)\right) \  H_{k,y}(\tau_{k}^{(2)}(y)z)\overline{\Psi}_k(\tau_k^{(1)}(\tau_{k}^{(2)}(y)z +z_{k,y}^{(1)})  +z_k)\right.\\ \\ C_k(\tau_k^{(1)}(\tau_{k}^{(2)}(y)z +z_{k,y}^{(1)}) +z_k)e^{-2u_X(\tau_k^{(1)}(\tau_{k}^{(2)}(y)z +z_{k,y}^{(1)}) +z_k)}  \left.\frac{i}{2} dz \wedge d \bar{z},\right] = \\ \\
\frac{C(0)\overline{\Psi}_0(0)H_{y}(0)}{(\tau_{k}^{(2)}(y))^{n_y^{(1)} - N_y^{(1)}-1} }\left(\int_{B_R}\left[e^{\varphi^{(2)}_{k,y}}\prod_{j \in J_{(2)}(y)} |z-p_{j,k}^{(2)}(y)|^{2n_j} \right. \right. \\ \\ \left. \left. z\prod_{w \in \mathcal{S}_{0}^{(2)}(y)\setminus{\{0\}}} \left( z-  z_{k,w}^{(2)}(y)\right)\frac{i}{2} dz \wedge d \bar{z}\right] + o(1)\right),
\end{array}
\end{equation}
and, by using for the blow-up point $y$ the analogous convergence property stated in \eqref{convweek2} for $y_0,$  we obtain that:
\begin{equation}\label{intIIbis}
\int_{B_R}e^{\varphi^{(2)}_{k,y}}\prod_{j \in J_{(2)}(y)} |z-p_{j,k}^{(2)}(y)|^{2n_j}  \prod_{w \in \mathcal{S}_{0}^{(2)}(y)\setminus{\{0\}}} z\left( z-  z_{k,w}^{(2)}(y)\right)\frac{i}{2} dz \wedge d \bar{z} =o(1),  
\end{equation}
as $k \to +\infty.$\\
Finally, arguing as above for $y_* \in \mathcal{S}_{2}^{(1)},$ we find a suitable sequence of functions $H_{k,y_*}$ satisfying: $H_{k,y_*} \to H_{y_*}\,\,\mbox{as }\, k \to +\infty,$ uniformly in $B_{\delta}$ and $H_{y_*}(0) \neq 0,$ and such that:
\begin{equation}\label{intIII}
\begin{array}{l}
\int_{B_{\delta}(z_{k,y_*} ^{(1)})}e^{\varphi^{(1)}_k}\left [ ... \right] = \int_{B_{\delta}}e^{\varphi^{(1)}_{k,y_*}}\left[ \prod_{w \in \mathcal{S}_{0}^{(2)}(y_*)\setminus{\{0\}}} \left( z- \tau_{k}^{(2)}(y_*) z_{k,w}^{(2)}(y_*)\right) \right. \\ \\ \left. \prod_{j \in J_{(2)}(y_*)} |z-\tilde{p}_{j,k}^{(1)}(y_*)|^{2n_j} H_{k,y_*}(z)\overline{\Psi}_k(\tau_k^{(1)}(z +z_{k,y}^{(1)})  +z_k)\right. \\ \\ \left.C_k(\tau_k^{(1)}(z +z_{k,y}^{(1)}) +z_k)e^{-2u_X(\tau_k^{(1)}(z +z_{k,y}^{(1)}) +z_k)}\frac{i}{2} dz \wedge d \bar{z} \right] =\\ \\ 
\frac{1}{(\tau_{k}^{(2)}(y_*))^{n_{y_{*}}^{(1)} - N_{y_{*}}^{(1)}}}\int_{\Omega_{k,\delta} ^{(1)}}\left[e^{\varphi^{(2)}_{k,y_{*}}}\prod_{j \in J_{(2)}(y_{*})} |z-p_{j,k}^{(2)}(y_{*})|^{2n_j}  \prod_{w \in \mathcal{S}_{0}^{(2)}(y_{*})\setminus{\{0\}}} \left( z-  z_{k,w}^{(2)}(y_{*})\right) \ \right.\\ \\ \left. H_{k,y_{*}}(\tau_{k}^{(2)}(y_*)z) \overline{\Psi}_k(\tau_k^{(1)}(\tau_{k}^{(2)}(y_*)z +z_{k,y}^{(1)})  +z_k)C_k(\tau_k^{(1)}(\tau_{k}^{(2)}(y_*)z +z_{k,y}^{(1)}) +z_k)\right.\\ \\ \left.e^{-2u_X(\tau_k^{(1)}(\tau_{k}^{(2)}(y_*)z +z_{k,y}^{(1)}) +z_k)}
\frac{i}{2} dz \wedge d \bar{z} \right]= \\ \\ 
\frac{C(0)\overline{\Psi}_0(0)H_{y_*}(0)}{(\tau_{k}^{(2)}(y_*))^{n_{y_{*}}^{(1)} - N_{y_{*}}^{(1)}}} \left(\int_{B_R}e^{\varphi^{(2)}_{k,y_{*}}}\prod_{j \in J_{(2)}(y_{*})}\left[ |z-p_{j,k}^{(2)}(y_{*})|^{2n_j} \right. \right. \\ \\ \left. \left. \prod_{w \in \mathcal{S}_{0}^{(2)}(y_{*})\setminus{\{0\}}} \left( z-  z_{k,w}^{(2)}(y_{*})\right)\frac{i}{2} dz \wedge d \bar{z} \right] + o(1) \right) \quad k \to +\infty. 
\end{array}
\end{equation}
\\
Hence, by using the analog of \eqref{convweek2} for the blow- up point $y_* \in \mathcal{S}_{2}^{(1)},$ we conclude:
\begin{equation}\label{intIIIbis}
\begin{array}{l}
\int_{B_R}e^{\varphi^{(2)}_{k,y_{*}}}\prod_{j \in J_{(2)}(y_{*})} |z-p_{j,k}^{(2)}(y_{*})|^{2n_j} \prod_{w \in \mathcal{S}_{0}^{(2)}(y_{*})\setminus{\{0\}}} \left( z-  z_{k,w}^{(2)}(y_{*})\right)\frac{i}{2} dz \wedge d \bar{z} = \\ \\
\frac{\pi}{4}{ M_w^{(2)}}(y_*)(\prod_{w\in\mathcal{S}_{0}^{(2)}(y_{*})\setminus{\{0\}}}(-w )) + o(1) ,\quad k \to +\infty
\end{array}
\end{equation}
\\
At this point, by recalling that: $C(0)=\frac{a_{x_0}^{(N_0)}(0)}{{N_{0}!}}$  then, we can use \eqref{minimo} together with \eqref{intIa}, \eqref{intIb}, \eqref{intII}, \eqref{intIIbis} and \eqref{intIII}, \eqref{intIIIbis} into \eqref{approx2}, to conclude that,
\begin{equation}\label{approx3}
\begin{array}{l}
\int_{B(x_0; r)} e^{\xi_k} <\a_k\, \, , \,\widehat \a_k > dA = \frac{\pi b_{x_0}}{(\tau_k^{(1)})^{n-N_0}(\tau_{k}^{(2)}(y_*))^{n_{y_{*}}^{(1)} - N_{y_{*}}^{(1)}}}(\frac {a_{x_0}^{(N_0)}(0)}{N_{0}!} \overline{ \psi_0(0)} +o(1)) \\ \\ + o_r(1), \quad
\mbox{with} \quad  b_{x_0} = M_w^{(2)}(y_*)\prod_{w\in\mathcal{S}_{0}^{(2)}(y_{*})\setminus{\{0\}}}(-w )H_{y_{*}}(0) \neq 0.
\end{array}
\end{equation}
Thus, when \eqref{vuoto} holds, we have proved \eqref{total asymp behavior2} with,
$$\varepsilon_{k,x_0} = (\tau_k^{(1)})^{n-N_0}(\tau_{k}^{(2)}(y_*))^{n_{y_{*}}^{(1)} - N_{y_{*}}^{(1)}} \to 0, \,\, k \to +\infty.$$

In case, for some $y \in \mathcal{S}_{2}^{(1)}$ we have that  $\mathcal{S}_{2}^{(2)}(y) \neq \emptyset,$ then around any blow-up point $w \in \mathcal{S}_{2}^{(2)}(y),$ (necessarily of "collapsing" type)  we can apply to the sequence $\varphi^{(2)}_{k,y}$ a further procedure of blow-up and obtain a new sequence to which the analogous of Proposition \ref{violate} applies. Again, in analogy to Lemma \ref{s,m}, for such new sequence we would have reduced either the number of "collapsing" zeroes conveging towards $w,$ (with respect to those "collapsing" zeroes converging towards $y$) or the corresponding value of the blow- up mass (with respect to $m_y^{(2)}$). In this way as above  we would obtain an additional "approximation" term to include into the sequence of divisors constructed above. By continuing in this way, and since such a procedure must stop after finitely many steps, we would end up with the appropriate sequence of divisors with the desired properties and such that \eqref{total asymp behavior2} holds. 

\end{proof}
\vskip.1cm
{\bf{Funding}} 
\vskip.1cm
The authors have been partially supported by MIUR Excellence Department Projects awarded to the Department of Mathematics, University of Rome Tor Vergata, 2018-2022 CUP E83C18000100006, and 2023-2027 CUP E83C23000330006. G.Tarantello is partially supported  by PRIN \emph{Variational and Analytical aspects of Geometric PDEs} n.2022AKNSE4. \\ S.Trapani is partially supported by PRIN \emph{Real and Complex Ma\-ni\-folds: Topology, Geometry and holomorphic dynamics} n.2017JZ2SW5. 
 \vskip.2cm
 {\bf{Authors and Affiliation}}
 
   \noindent{\sc Universit\`a di Roma TorVergata, Dipartimento di Matematica
  Via della Ricerca Scientifica 1, 00133 Roma, Italy.}\\
Gabriella Tarantello and  Stefano Trapani \\

{\tt tarantel@mat.uniroma2.it}

{\tt trapani@mat.uniroma2.it}

\end{document}